\numberwithin{equation}{section}
\theoremstyle{definition}
\newtheorem{definition}{Definition}[section]
\newtheorem{rmk}[definition]{Remark}
\theoremstyle{plain}
\newtheorem{theorem}[definition]{Theorem}
\newtheorem{prop}[definition]{Proposition}
\newtheorem{cor}[definition]{Corollary}
\newtheorem{lem}[definition]{Lemma}
\newcommand{\mb}{\mathbb}
\newcommand{\mc}{\mathcal}
\newcommand{\SL}{\operatorname{SL}}
\newcommand{\SO}{\operatorname{SO}}
\newcommand{\cF}{\mathcal{F}}
\newcommand{\cP}{\mathcal{P}}
\newcommand{\bN}{\mathbb{N}}
\newcommand{\bR}{\mathbb{R}}
\newcommand{\bZ}{\mathbb{Z}}
\newcommand{\at}{a}
\newcommand{\bt}{b}
\newcommand{\ct}{c}
\newcommand{\ra}{\rightarrow}
\newcommand{\qand}{\quad \textrm{and} \quad}
\newcommand\subsetsim{\mathrel{%
\ooalign{\raise0.2ex\hbox{$\subset$}\cr\hidewidth\raise-0.8ex\hbox{\scalebox{0.9}{$\sim$}}\hidewidth\cr}}}
\newcommand{\eps}{\varepsilon}
\DeclareMathOperator{\supp}{supp}
\DeclareMathOperator{\height}{ht}
\DeclareMathOperator{\Vol}{Vol}
\newcommand{\ul}[1]{\underline{#1}}
\newcommand{\tildea}{\tilde A}
\newcommand{\tildeo}{\tilde \Omega}
\DeclareMathSymbol{\shortminus}{\mathbin}{AMSa}{"39}
\newcommand\reallywidehat[1]{%
\savestack{\tmpbox}{\stretchto{%
  \scaleto{%
    \scalerel*[\widthof{\ensuremath{#1}}]{\kern-.6pt\bigwedge\kern-.6pt}%
    {\rule[-\textheight/2]{1ex}{\textheight}}
  }{\textheight}%
}{0.5ex}}%
\stackon[1pt]{#1}{\tmpbox}%
}
\begin{document}

\title[CLTs in multiplicative diophantine approximation]{Central Limit Theorems in Multiplicative Diophantine Approximation}


\author[M. Bj\"orklund]{Michael Bj\"orklund}
\address{Department of Mathematics, Chalmers, Gothenburg, Sweden}
\email{micbjo@chalmers.se}

\author[R. Fregoli]{Reynold Fregoli}
\address{Department of Mathematics, University of Michigan, U.S.}
\email{fregoli@umich.edu}

\author[A.  Gorodnik]{Alexander Gorodnik}
\address{Department of Mathematics, University of Z\"urich, Switzerland}
\email{alexander.gorodnik@math.uzh.ch}

\subjclass[2020]{Primary: 11K60, Secondary: 37A25, 37A44}
\keywords{Multiplicative Diophantine approximation, lattice point counting}

\begin{abstract}
We investigate the number of integer solutions to a multiplicative Diophantine approximation problem and show that the associated counting function converges in distribution to a normal law. Our approach relies on the analysis of correlations of measures on homogeneous spaces, together with estimates for Siegel transforms restricted to subspaces.
\end{abstract}

\maketitle

\section{Introduction}

For a pair of real numbers $\underline{x}=(x_1,x_2)$, we consider the products
\[
q\,\|qx_1\|\,\|qx_2\|,\qquad q\in\mathbb{N},
\]
where $\|\cdot\|$ denotes the distance to the nearest integer. Our interest lies in small values of these products. Accordingly, for $b\in(0,1)$ and $T>0$, we introduce the counting function
\[
N_{b,T}(\underline{x})
:=\big|\{q\in\mathbb{N}:\ q\,\|qx_1\|\,\|qx_2\|<b,\ 1\le q<T\}\big|.
\]
Throughout the paper, both $b$ and $T$ are treated as varying parameters. For example, in Theorem~\ref{th:main1} below one may take $b=T^{-\eta}$ with $\eta<6/5$.

The asymptotic behavior of this and related counting functions has been studied in
\cite{WY,Wid,BFG}. It is convenient to interpret this problem in terms of lattice
point counting. Indeed, consider the domains
\begin{equation}\label{eq:oomega}
\Omega_{b,T}:=\Big\{(x_1,x_2,y)\in\mathbb{R}^3:\ 
|x_1|\,|x_2|\,y<b,\ 
|x_1|,|x_2|\le \tfrac12,\ 
1\le y<T
\Big\}.
\end{equation}
When $\underline{x}$ has irrational coordinates, $N_{b,T}(\underline{x})$ coincides
with the number of integer triples $(p_1,p_2,q)\in\mathbb{Z}^3$ such that
\[
(qx_1+p_1,qx_2+p_2,q)\in\Omega_{b,T}.
\]
It was shown in \cite{BFG} that, under suitable conditions on the parameters $b$ and
$T$, for almost every $\underline{x}$,
\[
N_{b,T}(\underline{x})\sim \Vol(\Omega_{b,T})
\qquad\text{as }T\to\infty.
\]

The main objective of the present paper is to study the distributional behavior of
the functions $N_{b,T}$. In particular, we establish the following central limit theorem.

\begin{theorem}\label{th:main1}
Assume that
\[
(\ln T)^{-6/5+\theta}\le b<1
\qquad\text{for some }\theta>0.
\]
Then the normalized functions
\[
D_{b,T}(\underline{x})
:=\frac{N_{b,T}(\underline{x})-\Vol(\Omega_{b,T})}
{\Vol(\Omega_{b,T})^{1/2}},\qquad 
\underline{x}\in[0,1)^2,
\]
converge in distribution, as $T\to\infty$, to a non-degenerate normal law. More precisely, for every
$\xi\in\mathbb{R}$,
\[
\Vol\Big(\big\{\underline{x}\in[0,1)^2:\ D_{b,T}(\underline{x})<\xi\big\}\Big)
\longrightarrow
\frac{1}{\sqrt{2\pi}\sigma}
\int_{-\infty}^{\xi} e^{-t^2/(2\sigma^2)}\,dt,
\]
where
\[
\sigma^2
:=
\sum_{p,q\in\mathbb{N}}
\Vol\big(p^{-1}\cdot \Delta_\infty \cap q^{-1}\cdot \Delta\big) > 0,
\]
with
\begin{align*}
\Delta_\infty
&:=\big\{(x_1,x_2,y)\in\mathbb{R}^3:\ 
0<|x_1x_2|\,y\le 1\big\},\\
\Delta
&:=\big\{(x_1,x_2,y)\in\mathbb{R}^3:\ 
0<|x_1x_2|\,y\le 1,\ 
(2e)^{-1}<|x_1|,|x_2|\le \tfrac12
\big\}.
\end{align*}
\end{theorem}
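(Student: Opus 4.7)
The proof proceeds by the method of moments. The first step is to recast $N_{b,T}$ in the language of lattice-point counting on the space of unimodular lattices. Set
\begin{equation*}
u(\underline{x}):=\begin{pmatrix}1&0&x_1\\0&1&x_2\\0&0&1\end{pmatrix},\qquad \Lambda_{\underline{x}}:=u(\underline{x})\bZ^3.
\end{equation*}
Then for Lebesgue-almost every $\underline{x}$ one has $N_{b,T}(\underline{x})=\#(\Lambda_{\underline{x}}\cap\Omega_{b,T})=\widehat{1_{\Omega_{b,T}}}(\Lambda_{\underline{x}})$, where $\widehat{f}(\Lambda):=\sum_{v\in\Lambda\setminus\{0\}}f(v)$ denotes the Siegel transform. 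Theorem~\ref{th:main1} thus asks for a CLT for an unbounded observable evaluated along the horospherical orbit $\{\Lambda_{\underline{x}}\}_{\underline{x}\in[0,1)^2}$ inside $X=\SL_3(\bR)/\SL_3(\bZ)$; the choice of the $L^2$-normalisation $\Vol(\Omega_{b,T})^{1/2}$ reflects the asymptotics $\Vol(\Omega_{b,T})\asymp b(\ln T)^2$.

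The plan is to verify, for every $k\geq 1$, the convergence
\begin{equation*}
\int_{[0,1)^2}D_{b,T}(\underline{x})^k\,d\underline{x}\;\longrightarrow\;\sigma^k m_k,
\end{equation*}
where $m_k$ denotes the $k$-th moment of the standard normal. The case $k=1$ is a Siegel-type integral formula, carried out after a mild truncation of $\Omega_{b,T}$ away from the coordinate hyperplanes, where $\widehat{1_{\Omega_{b,T}}}$ fails to be integrable. The case $k=2$ is the source of $\sigma^2$. Expanding
\begin{equation*}
\int_{[0,1)^2}N_{b,T}(\underline{x})^2\,d\underline{x}=\sum_{v,w\in\bZ^3\setminus\{0\}}\int_{[0,1)^2}1_{\Omega_{b,T}}(u(\underline{x})v)\,1_{\Omega_{b,T}}(u(\underline{x})w)\,d\underline{x},
\end{equation*}
I would split the pairs $(v,w)$ into two classes. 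Pairs linearly independent over $\bQ$ give a joint integral that, after averaging in $\underline{x}$, essentially factorises as $\Vol(\Omega_{b,T})^2$ up to an acceptable error and so cancels with the mean-squared. Commensurable pairs $v=pw_0$, $w=qw_0$ with $w_0\in\bZ^3$ primitive and $p,q\in\bN$ reduce, after a change of variables that concentrates the joint count onto the rational line $\bR w_0$ and a rescaling of the body, to the volume $\Vol(p^{-1}\Delta_\infty\cap q^{-1}\Delta)$ multiplied by $\Vol(\Omega_{b,T})$. Summing over $p,q\in\bN$ and dividing by $\Vol(\Omega_{b,T})$ produces exactly the series defining $\sigma^2$; the term $p=q=1$ is positive, hence $\sigma^2>0$.

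The chief obstacle is the control of the higher moments $k\geq 3$. The plan is to show that all cumulants of $D_{b,T}$ of order $\geq 3$ vanish in the limit, so that only pair contractions survive in the expansion of $\int D_{b,T}^k$, reproducing the Isserlis/Wick formula for a centred Gaussian. This requires two interacting ingredients: (i) a joint equidistribution (or effective mixing) statement for $k$-tuples of points along the horospherical orbit on suitable products/quotients of $X$, which suppresses the contribution of $k$-tuples of lattice vectors in ``general position''; and (ii) the \emph{restricted Siegel transform} estimates advertised in the abstract, providing $L^p$-bounds for sums of $1_{\Omega_{b,T}}$ over the intersection of $\Lambda_{\underline{x}}$ with a rational subspace, and thus controlling the clusters of vectors responsible for the cuspidal tails of $\widehat{1_{\Omega_{b,T}}}$. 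The quantitative form of (ii) is the technical heart of the argument and the main difficulty. The threshold $b\geq(\ln T)^{-6/5+\theta}$ should emerge from balancing $\Vol(\Omega_{b,T})^{k/2}\asymp(b(\ln T)^2)^{k/2}$ against the error terms produced by these restricted-transform estimates; below this threshold, the cuspidal contribution to the higher moments would dominate and destroy the Gaussian regime.
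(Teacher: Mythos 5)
Your high-level plan (Siegel transform, Rogers-type second moment to extract $\sigma^2$, vanishing of cumulants of order $\geq 3$) matches the shape of the paper's argument, but it is missing the structural device that actually makes the argument run, and because of that the key factorization step you assert cannot be justified as stated. The measure you are averaging against is not Haar measure $\mu$ on $X=\SL_3(\bR)/\SL_3(\bZ)$; it is the singular measure $\nu$ supported on the horospherical slice $Y=U\Gamma$, so Siegel's and Rogers' mean-value formulas do not apply directly, and the claim that ``linearly independent pairs essentially factorise as $\Vol(\Omega_{b,T})^2$'' is exactly the thing that must be proved. The paper's mechanism for this is a tessellation $\Omega_{b,T}=\bigsqcup_{n\in\cF_\Omega}a(n)^{-1}\Delta_{\Omega,n}$ by diagonal translates of a fixed-size piece, which turns the $r$-th moment into a sum over tuples $(n_1,\dots,n_r)\in\cF_\Omega^r$ of correlation integrals $\int_Y\prod_i\widehat\chi_{\Delta_{\Omega,n_i}}\circ a(n_i)\,d\nu$; the companion paper's decorrelation and equidistribution estimates for $a(t)_*\nu$ then supply the factorization, with a rate controlled by $\min_{i\neq j}|n_i-n_j|$. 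Without the tessellation you have no parameter along which to apply equidistribution, and no way to compare $\nu$ to $\mu$.

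A second omission, related to the first, is that the limiting measures $\lim a(n)_*\nu$ depend on the direction of escape: when only one of $n_1,n_2$ is large the limit sits on a proper homogeneous subspace ($Y_1$ or $Y_2$), not on all of $X$. The tessellation indices with $\lfloor n\rfloor$ small give a substantial set of terms whose total contribution is a priori not $o(\Vol(\Omega)^{1/2})$; controlling them requires $L^p$ bounds for Siegel transforms restricted to $Y$, $Y_1$, $Y_2$ separately, and these feed into a three-way case split (roughly the paper's $\Phi^{(1)},\Phi^{(2)},\Phi^{(3)}$). Your proposal treats the restricted Siegel estimates as a single black box that ``controls the cuspidal tails,'' but the exponent $6/5$ in the hypothesis actually emerges from balancing the three regimes against each other; a one-parameter tail estimate would not reproduce it.
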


\vspace{0.5cm}

More generally, fix $c\in(0,1)$. For parameters $0\le a<b<1$ and $T>1$, we consider
the counting function
\begin{equation}\label{eq:NNN}
N_{a,b,T}(\underline{x})
:=\left|\left\{(p_1,p_2,q)\in\mathbb{Z}^3:\ 
\begin{array}{l}
a<|qx_1-p_1|\,|qx_2-p_2|\,q<b,\\[2pt]
|qx_1-p_1|,\,|qx_2-p_2|\le c,\quad 1\le q<T
\end{array}
\right\}\right|.
\end{equation}
Associated to this counting function is the family of domains
\begin{equation}\label{eq:omega0}
\Omega_{a,b,T}
:=\left\{(x_1,x_2,y)\in\mathbb{R}^{3}:\ 
a<|x_1|\,|x_2|\,y<b,\ 
|x_1|,|x_2|\le c,\ 
1\le y<T
\right\}.
\end{equation}

We establish the following generalization of Theorem~\ref{th:main1}.

\begin{theorem}\label{th:main2}
Assume that
\[
a \le b\,(\ln\ln T)^{-4-\theta}
\qquad\text{and}\qquad
(\ln T)^{-6/5+\theta}\le b<1
\]
for some $\theta>0$. Then the normalized functions
\[
D_{a,b,T}(\underline{x})
:=\frac{N_{a,b,T}(\underline{x})-\Vol(\Omega_{a,b,T})}
{\Vol(\Omega_{a,b,T})^{1/2}},
\qquad \underline{x}\in[0,1)^2,
\]
converge in distribution, as $T\to\infty$, to a normal law with variance
\[
\sigma_c^2
:=
\sum_{p,q\in\mathbb{N}}
\Vol\big(p^{-1}\cdot \Delta_\infty \cap q^{-1}\cdot \Delta_c\big),
\]
where
\begin{equation}\label{eq:Deltac}
\Delta_c
:=\left\{(x_1,x_2,y)\in\mathbb{R}^3:\ 
0<|x_1x_2|\,y\le 1,\ 
e^{-1}c<|x_1|,|x_2|\le c
\right\}.
\end{equation}
\end{theorem}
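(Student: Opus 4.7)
The plan is to reduce Theorem~\ref{th:main2} to a threshold-$c$ variant of Theorem~\ref{th:main1} via a subtraction argument. Writing $\widetilde N_{s,T}$ for the $a=0$, threshold-$c$ version of \eqref{eq:NNN} with upper bound $s$, we have (for a.e.\ $\underline{x}$) $N_{a,b,T} = \widetilde N_{b,T} - \widetilde N_{a,T}$ and $\Vol(\Omega_{a,b,T}) = \Vol(\Omega_{0,b,T}) - \Vol(\Omega_{0,a,T})$. Consequently,
\[
D_{a,b,T}(\underline{x}) \;=\; \alpha_T\,\widetilde D_{b,T}(\underline{x}) \;-\; \beta_T\,\widetilde D_{a,T}(\underline{x}),
\]
with $\alpha_T^2 := \Vol(\Omega_{0,b,T})/\Vol(\Omega_{a,b,T})$, $\beta_T^2 := \Vol(\Omega_{0,a,T})/\Vol(\Omega_{a,b,T})$, and $\widetilde D_{s,T}$ the corresponding normalized fluctuation.

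The first task is to establish the analog of Theorem~\ref{th:main1} for $\widetilde N_{b,T}$ with arbitrary threshold $c \in (0,1)$: namely, $\widetilde D_{b,T} \Rightarrow \cN(0,\sigma_c^2)$. This is an extension of the proof of Theorem~\ref{th:main1} with only notational modifications. One realises $\widetilde N_{b,T}$ as a Siegel-type transform on the homogeneous space $X = \SL_3(\bR)/\SL_3(\bZ)$ evaluated along a two-parameter unipotent orbit indexed by $\underline{x}$, and slices $\Omega_{0,b,T}$ into fundamental pieces for the diagonal $A$-action, whose fundamental domain is now $\Delta_c$ (the cube $[-\tfrac12,\tfrac12]^2$ being replaced by $[-c,c]^2$ throughout). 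The ensuing cumulant/moment computations, driven by quantitative multiple mixing of the diagonal flow on $X$ together with $L^p$-bounds for Siegel transforms restricted to subspaces, go through verbatim and produce the Gaussian limit with variance $\sigma_c^2$. Using the volume asymptotic $\Vol(\Omega_{0,s,T}) \asymp s(\ln T)^2$, valid for the parameters at hand, the hypothesis forces $\alpha_T \to 1$ and $\beta_T^2 \ll a/b \to 0$.

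It remains to show $\beta_T\widetilde D_{a,T} \to 0$ in probability. Since $a$ may lie below the CLT threshold $(\ln T)^{-6/5+\theta}$, the previous step does not apply to $\widetilde D_{a,T}$; instead one proves a crude $L^2$ estimate of the form
\[
\int_{[0,1)^2} \bigl|\widetilde N_{a,T}(\underline{x}) - \Vol(\Omega_{0,a,T})\bigr|^2\, d\underline{x} \;\ll\; \Vol(\Omega_{0,a,T})\cdot(\ln\ln T)^{4+o(1)},
\]
derived from the same second-moment bounds for Siegel transforms used in the previous step, but with a polylogarithmic loss reflecting excursions of the diagonal flow into the cusp of $X$. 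Combined with $\beta_T^2 \ll a/b$, this gives $\Var(\beta_T\widetilde D_{a,T}) \ll (a/b)(\ln\ln T)^{4+o(1)} \to 0$ under the hypothesis $a \le b(\ln\ln T)^{-4-\theta}$; Chebyshev's inequality and Slutsky's theorem then yield the conclusion. The main technical obstacle is precisely this uniform $L^2$ bound for the Siegel transform attached to a very thin multiplicative domain: one must quantify how the cusp geometry of $X$ interacts with the degeneration of $\Omega_{0,a,T}$ as $a \downarrow 0$, and it is this exponent $4$ that ultimately dictates the shape of the hypothesis on $a/b$.
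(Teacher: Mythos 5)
Your reduction $N_{a,b,T}=\widetilde N_{b,T}-\widetilde N_{a,T}$, $D_{a,b,T}=\alpha_T\widetilde D_{b,T}-\beta_T\widetilde D_{a,T}$, is algebraically correct and the Slutsky argument is sound in outline, but there is a structural problem with the two building blocks you feed into it.

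First, you treat a threshold-$c$ version of Theorem~\ref{th:main1} (the $a=0$ CLT for $\widetilde D_{b,T}$) as an already-available input, claiming it follows by ``notational modifications.'' In the paper the logical dependence runs the other way: Theorem~\ref{th:main1} is \emph{deduced from} Theorem~\ref{th:main2}. The direct CLT machinery is only established for a strictly positive lower threshold $a\ge(\ln T)^{-K}$, and the extension to $a=0$ is done by splitting $\Omega=\Omega'\sqcup\Omega''$ at level $|x_1x_2|y=(\ln T)^{-K}$, applying the CLT to $\Omega'$ and Borel--Cantelli (Lemma~\ref{l:small}) to $\Omega''$. There is no direct $a=0$ argument in the paper. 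Moreover, the machinery genuinely breaks at $a=0$: the tessellation index set $\cF_\Omega$ in \eqref{Def_FT} has $\beta_\Omega=\ln(Ta^{-1}c^2)$, so $|\cF_\Omega|$ is unbounded as $a\to0$; the smoothing lemmas (Lemmas~\ref{prop:appestL1}--\ref{p:app_l2}) require $\eps_T=O_c(a)$ and produce error terms of the form $-(\eps_T/a)\ln(\eps_T/a)$; and Proposition~\ref{prop:L2} carries a factor $\max\bigl(1,\ln(\beta/\xi)\bigr)^2$ that blows up when the lower $y$-cutoff $\xi$ (proportional to $a$) is sent to $0$. So your Step~1 is not a routine extension; if you insist on proving the $a=0$ CLT independently, you must develop replacements for all of these ingredients, and you would then have reproved the harder half of the theorem from scratch.

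Second, your crude $L^2$ estimate
\[
\int_{[0,1)^2}\bigl|\widetilde N_{a,T}(\underline{x})-\Vol(\Omega_{0,a,T})\bigr|^2\,d\underline{x}\;\ll\;\Vol(\Omega_{0,a,T})\,(\ln\ln T)^{4+o(1)}
\]
is asserted, not derived, and it must hold \emph{uniformly} over the entire range $0\le a\le b(\ln\ln T)^{-4-\theta}$, which includes values of $a$ near $(\ln T)^{-6/5}$, at the very edge of the regime the paper can treat. For such $a$ the paper's approach to a second-moment bound goes through the full tessellation-plus-decorrelation argument (Sections~6--8), which already presupposes $a\ge(\ln T)^{-K}$; for smaller $a$ the only control available is the almost-everywhere boundedness of Lemma~\ref{l:small}, which gives no $L^2$ statement. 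The specific exponent $4$ you assign to the polylog loss is not justified and does not match the source of the exponent in the paper: there, $a\le b(\ln\ln T)^{-4-\varepsilon}$ enters in Lemma~\ref{lem:variancepositive} to compare $\Delta_\Omega$ with $\widetilde\Delta_\Omega$ (after a Cauchy--Schwarz this yields $(\ln\ln T)^{-2-\varepsilon/2}$, which is then needed to beat $|\cF_\Omega^\triangledown|\ll(\ln T)^2(\ln\ln T)^2$), not in any raw $L^2$ bound for the thin domain. You would need to prove your $L^2$ estimate from scratch, and it is far from clear that it holds with only a polylogarithmic loss across this range of $a$.

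In short, your decomposition is a different slicing of the problem from the paper's (you subtract $\widetilde N_{a,T}$ rather than carving out a $(\ln T)^{-K}$-thin slab), but both of the resulting subproblems are, as stated, unproved and substantially harder than you suggest. The paper's route---direct CLT for $a\ge(\ln T)^{-K}$ via tessellation, smoothing, decorrelation, and cumulants, followed by Borel--Cantelli plus a volume estimate to absorb the thin complement---avoids both difficulties, precisely because it never needs to say anything quantitative about Siegel transforms on $\Omega_{0,a,T}$ for $a$ below the smoothing threshold.
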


In the one-dimensional setting, the distribution of the counting function for the
number of integer solutions $(p,q)$ to the inequalities
\[
|qx-p|<\psi(q),\qquad 1\le q<T,
\]
was studied in \cite{lev1,lev2,ph,f} using continued fraction techniques, where a
central limit theorem was established.

In higher dimensions, a central limit theorem was proved in
\cite{dfv,BG19} for the number of integer solutions $(p_1,\ldots,p_n,q)$
to the system of inequalities
\[
|qx_1-p_1|<b_1 q^{-w_1},\ \ldots,\ |qx_n-p_n|<b_n q^{-w_n},\qquad 1\le q<T,
\]
with fixed constants $b_i$ and exponents satisfying
$w_1+\cdots+w_n=1$.

While the present paper follows the general approach developed in
\cite{BG19}, the multiplicative setting considered here presents several
new challenges, which we outline in the next section. A related
multiplicative Diophantine approximation problem involving products of
linear forms was studied in our earlier work \cite{BG}. That work relied
on a different tessellation construction and the method was effective
only in dimensions greater than nine.

\section{Outline of the Proof Strategy}

Our approach relies on analysis on the space $X$ of unimodular lattices in $\mathbb{R}^3$. 
The connection with the original counting problem is provided by the Siegel transform. 
For a compactly supported function $f\colon\mathbb{R}^3\to\mathbb{R}$, its Siegel transform 
$\widehat f\colon X\to\mathbb{R}$ is defined by
\[
\widehat f(\Lambda)
:=\sum_{\underline{\lambda}\in\Lambda\setminus\{0\}} f(\underline{\lambda}),
\qquad \Lambda\in X.
\]

To simplify notation, we suppress the dependence on the parameters $a,b,T$ and write 
$\Omega:=\Omega_{a,b,T}$ for the domain defined in \eqref{eq:oomega}. 
The counting function \eqref{eq:NNN} can then be interpreted as a function on $X$:
\[
N_{a,b,T}(\underline{x})
=|\Lambda_{\underline{x}}\cap\Omega|
=\widehat{\chi}_{\Omega}(\Lambda_{\underline{x}}),
\]
where
\[
\Lambda_{\underline{x}}
:=\big\{(qx_1+p_1,qx_2+p_2,q):\ p_1,p_2,q\in\mathbb{Z}\big\}\in X.
\]
To prove our main results, we analyze the moments of $N_{a,b,T}$.

In Section~\ref{sec:tess}, we introduce a tessellation of $\Omega$ of the form
\[
\Omega=\bigsqcup_{n\in\mathcal{F}_\Omega} a(n)^{-1}\Delta_{\Omega,n},
\]
constructed using the diagonal transformations
\[
a(t):=\operatorname{diag}\big(e^{t_1},e^{t_2},e^{-t_1-t_2}\big),
\qquad t=(t_1,t_2)\in\mathbb{R}_+^2.
\]
This decomposition allows us to write
\[
\widehat{\chi}_{\Omega}(\Lambda_{\underline{x}})
=\sum_{n\in\mathcal{F}_\Omega}
\widehat{\chi}_{\Delta_{\Omega,n}}\big(a(n)\Lambda_{\underline{x}}\big),
\]
and hence to express the moments as
\[
\int_{[0,1)^2} N_{a,b,T}(\underline{x})^r\,d\underline{x}
=\sum_{(n_1,\ldots,n_r)\in\mathcal{F}_\Omega^r}
\int_{[0,1)^2}
\prod_{i=1}^r
\widehat{\chi}_{\Delta_{\Omega,n_i}}\big(a(n_i)\Lambda_{\underline{x}}\big)
\,d\underline{x}.
\]

In practice, we work with smooth compactly supported approximations of the functions 
$\widehat{\chi}_{\Delta_{\Omega,n_i}}$, constructed in Section~\ref{sec:appccs}. 
The above formula shows that the analysis of moments of $N_{a,b,T}$ reduces, 
to a large extent, to understanding the convergence of certain measures on the product space $X^r$. 
Let $\nu$ denote the measure on $X$ supported on the family of lattices $\Lambda_{\underline{x}}$, 
$\underline{x}\in[0,1)^2$, and let $\nu_r^\Delta$ be its image under the diagonal embedding into $X^r$. 
The problem then becomes the study of the push-forward measures
\[
(a(t_1),\ldots,a(t_r))_*\nu_r^\Delta,
\qquad t_1,\ldots,t_r\in\mathbb{R}_+^2,
\]
on $X^r$.

This question was investigated in our companion paper \cite{BFG2}, and the relevant results are summarized in Section~\ref{sec:corr}. 
In contrast to the one-parameter setting considered in \cite{BG19}, this family of measures 
admits many accumulation points supported on proper homogeneous subspaces of $X^r$ (see Theorem~\ref{prop:ED}). 
Excluding the basins of attraction of these limiting measures—where correlations can be analyzed using Theorem~\ref{prop:ED}—still leaves a large number of terms whose total contribution is not $o\big(\Vol(\Omega)^{1/2}\big)$.

A key new ingredient is a decorrelation estimate (Theorem~\ref{prop:integrability}), 
in which the error term depends only on the mutual distances $|t_i-t_j|$. 
This estimate plays a crucial role in the analysis of correlation sums and ultimately allows us 
to exploit cancellations in cumulants, making it possible to control higher-order moments.

The presence of limiting measures supported on proper homogeneous subspaces leads to an additional difficulty: 
to obtain effective bounds, we also need estimates for the $L^p$-norms of Siegel transforms restricted to these subspaces. 
This analysis is carried out in Section~\ref{sec:appest}. 
The resulting estimates are independent of the rest of the paper and may be of interest for other problems in multiplicative Diophantine approximation.

Using these Siegel transform bounds, we analyze the variance in Sections~\ref{sec:var1} and~\ref{sec:CompVar}. 
In Section~\ref{sec:var1}, we treat the contribution coming from basins of attraction of proper limiting measures, 
while Section~\ref{sec:CompVar} deals with the remaining terms. 
Higher-order cumulants are handled in Section~\ref{sec:higherord}. 
Finally, the proof of the main theorem is completed in Section~\ref{sec:compofp}. 
Combining the variance and cumulant estimates obtained in these sections, 
we deduce convergence to the normal law under suitable assumptions on the parameters.

Our method requires an additional condition of the form $a\ge(\ln T)^{-K}$ for some $K>0$. 
The complementary case is handled by an application of a Borel–Cantelli argument (see Lemma~\ref{l:small}).


\section{Tessellations} \label{sec:tess}

Throughout the paper, we work with the sets
$\Omega:=\Omega_{a,b,T}$ introduced in \eqref{eq:omega0}
depending on $0<a<b<1$, $0<c<1$, and $T> 1$. 
While $c$ is fixed through the paper, $a,b,T$ are viewed as varying parameter. A direct computation gives that
$$
V_\Omega:=\Vol(\Omega) = 2(\ln T)^2 (b-a) + 4(\ln T) \left( (b-a) + b \ln\left(b^{-1}c^2\right) - a \ln\left(a^{-1}c^2\right) \right) + O_c(1).
$$
In particular, it follows that 
\begin{equation}\label{eq;voll}
V_\Omega:=\Vol(\Omega)\sim 2(\ln T)^2(b-a)\quad\hbox{as $T\to\infty$}
\end{equation}
when
$$
a\gg (\ln T)^{-K}\;\;\hbox{for some $K>0$}\qand (\ln T)^2(b-a)\to\infty.
$$

A crucial ingredient of our argument is the tessellation constructed in our previous work \cite[Lemma 3.1]{BFG}:
\begin{equation}\label{eq:tess}
\Omega =  \bigsqcup_{n \in \mathcal{F}_\Omega} a({n})^{-1}\Delta_{\Omega,{n}},
\end{equation}
where
\begin{align}
\label{Def_DeltaTn}
\Delta_{\Omega,{n}} &:= \left\{ (x_1,x_2,y)\in\bR^3: \;
\begin{tabular}{l}
 $\at < |x_1 x_2|\, y \leq \bt,\;
e^{-1} \ct  < |x_1|,\,|x_2| \leq \ct$\\
 $e^{-n_1 - n_2} \leq y < T e^{-n_1 - n_2}$
\end{tabular}
\right\}, \\
\label{Def_FT}
\cF_\Omega &:= \left\{ n \in \mb N_o^2 \,  : \,  \alpha_\Omega 
\leq n_1 + n_2 < \beta_\Omega \right\}
\end{align}
with
\begin{equation}
\label{Def_AB}
\alpha_\Omega := \ln\left(e^{-2} \bt^{-1} \ct^2 \right) \qand \beta_\Omega :=  \ln\left(T\at^{-1}\ct^2\right).
\end{equation}
We have 
\begin{equation}\label{eq:cardQRT}
|\cF_\Omega|\le (\beta_\Omega+1)(\beta_\Omega-\alpha_\Omega+1)\ll_c (\ln T)^2.
\end{equation}
One can check (see \cite[Lemma 3.1]{BFG}) that
\begin{equation}\label{eq:subb}
\Delta_{\Omega,{n}} 
\subset
 \left[- \ct,\ct\right]^2
 \times 
 \left({\at}{\ct^{-2}},e^2 {\bt }{\ct^{-2}}\right]\quad\hbox{for all $n$.
}
\end{equation}
We use a certain regularity property of the sets $\Delta_{\Omega,{n}}$, which is the content of 
\cite[Lemma~4.3]{BFG}. For $\eps > 0$, let $V_\eps$ denote 
the symmetric open neighborhood of the identity in $\SL_3(\bR)$
defined by
\begin{equation}\label{eq:ve}
V_\varepsilon:=\{g\in \SL_3(\bR):\|g-\textup{id}\|_{\textup{op}},\|g^{-1}-\textup{id}\|_{\textup{op}}<\varepsilon\},
\end{equation}
where $\|\cdot\|_{\textup{op}}$ stands for the operator norm of a matrix induced by the supremum norm on $\mb R^3$. 
For $E \subset \bR^3$ and $y\in\bR$, we write
\[
E^y := \{ (x_1,x_2) \in \bR^2 \,  : \,  (x_1,x_2,y) \in E \}.
\]
We are interested in comparing the sets $\Delta_{\Omega,{n}}$ and $g^{-1}\Delta_{\Omega,{n}}$ with $g\in V_\eps$.
In view of \eqref{eq:subb}, the sets $\Delta_{\Omega,{n}}$ satisfy the conditions of 
\cite[Lemma 4.3]{BFG} with $M=e^2 c^{-2}$ and  $\gamma=a c^{-2}$.
Therefore, we deduce from this lemma that for every $\eps\in (0, O_c(a))$, there exist 
sets $E_s\subset \bR^2 \times \bR$, $s=1,\ldots,24$, such that
\begin{equation}\label{eq:controled}
\left(g^{-1}\Delta_{\Omega,{n}} \setminus \Delta_{\Omega,{n}}\right) \sqcup \left(\Delta_{\Omega,{n}} \setminus g^{-1}\Delta_{\Omega,{n}}\right) \subset \bigcup_{s} E_s\quad\hbox{for all $g \in V_\eps$, }
\end{equation}
where each of these sets satisfies 
\begin{equation}\label{eq:controled1}
E_s \subset \left[-(e^2 c^{-2}+1),e^2 c^{-2}+1\right]^2 \times \left(a c^{-2}/2,e^2 c^{-2}+1\right]
\end{equation}
and
\begin{equation}\label{eq:controled2}
\Vol(E_s^y)
\ll_c
\max\left(\eps,-\frac{\eps}{a} \ln\left(\frac{\eps}{a}\right)\right)\quad\hbox{for all $y$,}
\end{equation}
 or  there is an interval $[\alpha,\beta]$ such that
\begin{equation}\label{eq:controled3}
E_s \subset \left[-(e^2 c^{-2}+1),e^2 c^{-2}+1\right]^2 \times [\alpha,\beta],  \quad \beta-\alpha \ll_c \eps,\quad \alpha\geq a c^{-2}/4.
\end{equation}
In particular, it follows from \eqref{eq:controled2}--\eqref{eq:controled3} that 
\begin{equation}\label{eq:es}
\Vol(E_s)
\ll_c
\max\left(\eps,-\frac{\eps}{a} \ln\left(\frac{\eps}{a}\right)\right).
\end{equation}
We also note that the sets $E_s$ constructed in \cite[Lemma 4.3]{BFG} are given by finitely many linear inequalities.

\section{Correlations on homogeneous spaces}\label{sec:corr}
Our analysis is carried out on the space of unimodular three-dimensional lattices
$$
X\simeq G/\Gamma,\quad\hbox{where $G:=\hbox{SL}_3(\bR)$ and $\Gamma:=\hbox{SL}_3(\bZ)$,}
$$
equipped with the probability invariant measure $\mu$.
We consider the homogeneous subspace
\begin{equation}\label{eq:u}
Y:=U\Gamma\subset X,\quad\hbox{where $U:=\begin{pmatrix} 1 & 0 & * \\
0 & 1 & * \\
0 & 0 & 1\end{pmatrix}$},
\end{equation}
equipped with the probability invariant measure $\nu$. 
The crucial question that we will need to solve for the present work and which was explored in details in our companion paper \cite{BFG2}
is the limiting behavior of the translated measures $a(t)_*\nu$ with $t\in \bR_+^2$ and their correlations. 
In particular, we use the following decorrelation estimate for these measures, where $|\cdot |$ denotes
the $\max$-norm on $\bR^2$ and $\|\cdot \|_{C^\ell}$ stands for the $C^\ell$-norm on the space of smooth functions on $X$.

\begin{theorem}[\cite{BFG2},Theorem 1.4]
\label{prop:integrability}
Let $r\in\mb{N}$. There exist $\ell\ge 1$ and $\eta_r>0$ such that for any 
$t_{1},\dotsc,t_{r}\in\bR^{2}_+$ and 
$\varphi_{1},\dotsc,\varphi_{r}\in C_c^\infty(X)$,
$$
\left|\int_{Y}\prod_{i=1}^{r}\varphi_{i}\circ a(t_{i})\,d\nu-\prod_{i=1}^{r}\int_{Y}\varphi_{i}\circ a(t_{i})\,d\nu\right|\ll_{r}e^{-\eta_{r}\min_{i\neq j}|t_{i}-t_{j}|}
\prod_{i=1}^{r}\|\varphi_{i}\|_{C^\ell}.
$$
\end{theorem}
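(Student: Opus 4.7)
The plan is to establish the decorrelation estimate by induction on $r$, using three ingredients: (i) the $U$-invariance of $\nu$ (which identifies $Y = U/(U\cap\Gamma)$ with a two-torus carrying Lebesgue measure), (ii) effective equidistribution of the expanding-horospherical orbit $a(t)_*\nu$ toward $\mu$ (Kleinbock--Margulis type, quantified in a $C^\ell$-norm at rate $e^{-\eta|t|}$), and (iii) a renormalization that arranges for only pairwise separations $|t_i-t_j|$ to enter the final bound.

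The base case $r=1$ is trivial. For the inductive step, after relabeling pick an index $i_0$ attaining the minimum pairwise distance $D := \min_{i\neq j}|t_i - t_j|$; for concreteness, assume $i_0 = r$. Using $U$-invariance of $\nu$, thicken against a smooth probability density $\rho_\eps$ supported in an $\eps$-ball around the identity in $U$:
\begin{equation*}
\int_Y \prod_{i=1}^r \varphi_i\bigl(a(t_i)x\bigr)\,d\nu(x) = \int_U \rho_\eps(\tilde u)\int_Y \prod_{i=1}^r \varphi_i\bigl(\tilde u^{(i)}\,a(t_i)x\bigr)\,d\nu(x)\,d\tilde u,
\end{equation*}
where $\tilde u^{(i)} := a(t_i)\tilde u a(t_i)^{-1}\in U$. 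Since $U$ is the expanding horospherical subgroup for $a(t)$ with $t\in\bR^2_+$, the conjugate $\tilde u^{(i)}$ has scale $\eps\,e^{O(|t_i|)}$. Tuning $\eps\sim e^{-\kappa|t_r|}$ with $\kappa$ sufficiently large, the change of variable $v:=\tilde u^{(r)}$ in the inner integral converts $\rho_\eps$ into a unit-scale kernel on $U$ that smears $\varphi_r\circ a(t_r)$ into a $C^\ell$-bounded function $\Phi_r\circ a(t_r)$; meanwhile, for $i\neq r$, $\tilde u^{(i)}$ remains small, so $\varphi_i(\tilde u^{(i)}a(t_i)x)\approx \varphi_i(a(t_i)x)$ up to an error controlled by $\|\varphi_i\|_{C^1}$ and $|t_r-t_i|\geq D$. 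Combining the inductive hypothesis applied to $\prod_{i<r}\varphi_i\circ a(t_i)$ (which gives a factor $e^{-\eta_{r-1}D'}$ with $D'\geq D$) with effective equidistribution of $a(t_r)_*\nu$ applied to $\Phi_r$ (which gives the extra $e^{-\eta D}$), one obtains the claimed bound with some $\eta_r > 0$. A parallel, more concrete viewpoint is to Fourier expand each $f_i(u):=\varphi_i(a(t_i)u\Gamma)$ on the two-torus $Y$ as $\sum_k \hat f_i(k)\,e^{2\pi i k\cdot u}$, reduce to the Parseval-type identity $\int_Y\prod_i f_i\,d\nu=\sum_{k_1+\cdots+k_r=0}\prod_i\hat f_i(k_i)$, and bound the nonzero Fourier modes $\hat f_i(k)$ using the $a(t_i)$-dilation structure on $U$ and the smoothness of $\varphi_i$.

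The main obstacle is that the bound must depend only on pairwise separations $|t_i-t_j|$ and not on individual magnitudes $|t_i|$; this requires precise cancellation between the Jacobian $e^{3(t_{r,1}+t_{r,2})}$ of the $U$-change of variable, the scale of $\rho_\eps$, and the equidistribution rate. The technical heart of the argument is a sharp $C^\ell$-quantitative equidistribution of expanding horospheres, available from the spectral gap on $L^2(X)$, together with careful bookkeeping of Sobolev norms to ensure that the required level $\ell$ stays bounded as the induction proceeds.
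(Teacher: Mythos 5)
This statement is not proved in the present paper: it is cited verbatim as Theorem~1.4 of the companion paper \cite{BFG2}, so there is no in-paper argument to compare your proposal against. I can nevertheless assess the proposal on its own terms, and I believe there is a genuine gap.

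Your ingredient~(ii) — effective equidistribution of the push-forwards $a(t)_*\nu$ toward $\mu$ at a rate $e^{-\eta|t|}$ — is false in the generality you need. The paper states explicitly (just after \eqref{eq:yy}) that $a(t_1,0)_*\nu\to\nu_1$ and $a(0,t_2)_*\nu\to\nu_2$, i.e.\ toward the invariant measures on the \emph{proper} homogeneous subspaces $Y_1,Y_2\subsetneq X$; $a(t)_*\nu\to\mu$ only when both coordinates of $t$ tend to infinity. The existence of a whole cone of degenerate limit directions is not a technicality — Section~2 of the paper identifies this as one of the central new difficulties relative to \cite{BG19}, and the hierarchy of limits is precisely the content of the other imported result, Theorem~\ref{prop:ED}. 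Your inductive step, which applies equidistribution of $a(t_r)_*\nu$ toward $\mu$ to produce the factor $e^{-\eta D}$, therefore breaks whenever the selected $t_r$ stays near the boundary of $\bR^2_+$: for instance $t_1=(0,0)$, $t_2=(N,0)$ has $D=N$ large, yet $a(t_2)_*\nu$ never approaches $\mu$ at any rate, and no value of $|t_2|$ controls the discrepancy you are trying to bound. Any correct argument must decorrelate \emph{relative to the appropriate limiting measure} ($\mu$, $\nu_1$, $\nu_2$, or $\nu$ itself), which your sketch does not do.

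There are two secondary problems worth flagging. First, "pick an index $i_0$ attaining the minimum pairwise distance" is not well-posed: the minimum is attained by a \emph{pair}, and neither member of that pair need satisfy $|t_{i_0}|\gtrsim D$, which your bookkeeping implicitly requires (it is the equidistribution rate $e^{-\eta|t_{i_0}|}$ that must dominate $e^{-\eta_r D}$). Second, the thickening scale $\eps\sim e^{-\kappa|t_r|}$ produces errors of size $\eps\,e^{O(|t_i|)}$ from perturbing each $\varphi_i$ with $i\neq r$; for these to be $\lesssim e^{-\eta|t_r-t_i|}$ you would need $|t_r|\gtrsim|t_i|$ for all $i$, which does not follow from your choice of $i_0$. (Taking $i_0=\arg\max_i|t_i|$ fixes this particular issue, but not the equidistribution gap above.) The Fourier-on-$\bT^2$ viewpoint you mention at the end is, I suspect, closer to what can actually be made to work, because it keeps the comparison intrinsic to $Y$ rather than routing through $\mu$; but as written it is only a heuristic.
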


Depending on the parameter $t$, the measures $a(t)_*\nu$ converge to measures supported on different homogeneous subspaces.  The following homogeneous subspaces come into play:
\begin{equation}\label{eq:yy}
Y_1:=G_1\Gamma \;\;\hbox{and}\;\; Y_2:=G_2\Gamma \quad\hbox{ with 
$G_{1}:=\begin{pmatrix} * & 0 & * \\
* & 1 & * \\
* & 0 & *\end{pmatrix}$
\;\;\hbox{and}\;\; $G_{2}:=\begin{pmatrix} 1 & * & * \\
0 & * & * \\
0 & * & *\end{pmatrix}
$
}.
\end{equation}
We denote by $\nu_1$ and $\nu_2$  the invariant probability measures supported on $Y_1$ and $Y_2$ respectively.
One can check that 
$$
a(t_1,0)_*\nu\to \nu_1,\quad a(0,t_2)_*\nu\to \nu_2,\quad a(t_1,t_2)_*\nu\to \mu
$$
as $t_1,t_2\to\infty$.
Theorem \ref{prop:ED} below gives
a generalization of this property.
For $I\subset\{1,2\}$, we set
$$Y_{I}:=\begin{cases}
Y\quad\mbox{if } I=\emptyset, \\
Y_{i}\quad\mbox{if } I=\{i\}\mbox{ with }i=1,2, \\
X\quad\mbox{if } I=\{1,2\}.
\end{cases}$$
and denote by $\nu_I$ the corresponding measures supported on $Y_I$.
For $t\in\bR^2$, we write$
\lfloor t \rfloor_{I}:=\min\{t_{i}:i\in I\}$.

\begin{theorem}[\cite{BFG2}, Theorem 6.1]
\label{prop:ED}
Let $r\in\mb{N}$. There exist $\ell\ge 1$ and $\gamma_r>0$ such that 
for any $I_{1},\ldots,I_r\subset \{1,2\}$, $t_{1},\dotsc,t_{r}\in\bR_+^{2}$ with $(t_i)_k=0$ for $k\notin I_{i}$,
and $\varphi_{1},\dotsc,\varphi_{r}\in C_c^\infty(X)$,
\begin{multline}
\left|\int_{Y}\prod_{i=1}^{r}\varphi_{i}\circ a(t_{i})\,d\nu-\prod_{i=1}^{r}\int_{Y_{I_{i}}}\varphi_{i}\,d\nu_{I_{i}}\right| \\
\ll_r e^{-\gamma_{r} \min\big(\min_{i}\lfloor t_{i}\rfloor_{I_{i}},\, \min_{i\neq j}|t_{i}-t_{j}|\big)} \prod_{i=1}^{r}\|\varphi_{i}\|_{C^\ell}.
\end{multline}
\end{theorem}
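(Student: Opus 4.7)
The plan is to combine Theorem~\ref{prop:integrability} (the decorrelation estimate on $Y$) with an effective single-translate equidistribution
\[
a(t)_*\nu \longrightarrow \nu_I \qquad \text{as }\lfloor t\rfloor_I\to\infty \text{ (with } t_k=0\text{ for } k\notin I\text{),}
\]
quantified with exponential rate in $\lfloor t\rfloor_I$, and then telescope. Setting $A_i:=\int_Y\varphi_i\circ a(t_i)\,d\nu$ and $B_i:=\int_{Y_{I_i}}\varphi_i\,d\nu_{I_i}$, Theorem~\ref{prop:integrability} gives
\[
\int_Y\prod_{i=1}^r \varphi_i\circ a(t_i)\,d\nu \;=\; \prod_{i=1}^r A_i \;+\; O_r\Bigl(e^{-\eta_r\min_{i\neq j}|t_i-t_j|}\prod_{i=1}^r\|\varphi_i\|_{C^\ell}\Bigr),
\]
so the problem reduces to proving, for each $i$, the single-translate bound
\[
|A_i-B_i|\;\ll\; e^{-\gamma\lfloor t_i\rfloor_{I_i}}\|\varphi_i\|_{C^\ell}.
\]

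\textbf{The single-translate step.} The case $I_i=\emptyset$ is vacuous ($t_i=0$, so $A_i=B_i$). For $I_i=\{1,2\}$ both coordinates $t_{i,1},t_{i,2}$ are positive, and $U$ is the full unstable horospherical subgroup of $a(t_i)$ in $\SL_3(\bR)$: the root spaces $E_{13},E_{23}$ spanning $\Lie(U)$ are expanded with weights $e^{2t_{i,1}+t_{i,2}},e^{t_{i,1}+2t_{i,2}}\ge e^{\lfloor t_i\rfloor_{I_i}}$, and quantitative equidistribution of translated horocycles on $X=G/\Gamma$---obtained via the Kleinbock--Margulis thickening argument combined with the spectral gap on $L^2_0(X,\mu)$---delivers the desired bound. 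For $I_i=\{1\}$ (and symmetrically $I_i=\{2\}$), the key observation is that $U\subset G_1$ and that under $a(t_{i,1},0)=\operatorname{diag}(e^{t_{i,1}},1,e^{-t_{i,1}})$ the adjoint action on $\Lie(G_1)=\operatorname{span}(H,E_{13},E_{23},E_{21},E_{31})$ (with $H=E_{11}-E_{33}$) has expanding eigenspaces exactly $E_{13}$ and $E_{23}$, with weights $e^{2t_{i,1}}$ and $e^{t_{i,1}}$; the remaining root spaces $E_{21},E_{31}$ are contracted and $H$ is neutral. Hence $U$ is precisely the unstable horospherical subgroup of $a(t_{i,1},0)$ inside $G_1\cong\SL_2(\bR)\ltimes\bR^2$, and the analogous spectral-gap/thickening argument on the homogeneous space $Y_1=G_1\Gamma$---using the $\SL_2$-spectral gap together with decay of matrix coefficients in the $\bR^2$-representation---yields the required estimate with rate $e^{-\gamma t_{i,1}}$.

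\textbf{Combining the estimates.} Using $|A_j|,|B_j|\le\|\varphi_j\|_{C^0}\le\|\varphi_j\|_{C^\ell}$ together with the telescoping identity
\[
\prod_i A_i-\prod_i B_i \;=\; \sum_{i=1}^r \Bigl(\prod_{j<i}A_j\Bigr)(A_i-B_i)\Bigl(\prod_{j>i}B_j\Bigr),
\]
the single-translate bound gives $\bigl|\prod_i A_i-\prod_i B_i\bigr|\le r\max_i e^{-\gamma\lfloor t_i\rfloor_{I_i}}\prod_i\|\varphi_i\|_{C^\ell}$. Adding this to the decorrelation error and applying $e^{-\alpha}+e^{-\beta}\le 2\,e^{-\min(\alpha,\beta)}$ yields the claimed bound with $\gamma_r:=\tfrac{1}{2}\min(\eta_r,\gamma)$, after possibly increasing $\ell$.

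\textbf{Main obstacle.} The substantive work is the effective single-translate equidistribution into the proper subspaces $Y_1,Y_2$: one must (i) identify the correct unipotent-horospherical picture inside $G_1,G_2$ for the partially degenerate flow $a(t_{i,1},0)$, (ii) establish a spectral gap on $L^2(Y_i,\nu_i)$, which reduces to the spectral gap of the $\SL_2$-factor plus decay of matrix coefficients on the abelian radical, and (iii) perform the Sobolev smoothing with $C^\ell$-loss uniform in $r$ and compatible with the norm appearing in Theorem~\ref{prop:integrability}. Once this quantitative single-translate equidistribution is in place, the decorrelation/telescoping combination above is elementary.
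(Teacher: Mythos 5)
The paper does not prove this statement; it is quoted directly from the companion paper \cite{BFG2}. So I can only judge your sketch as a stand-alone blueprint, not against a proof in this source.

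Your reduction is the natural one: combine Theorem~\ref{prop:integrability} with an effective single-translate equidistribution $a(t_i)_*\nu\to\nu_{I_i}$ (rate $e^{-\gamma\lfloor t_i\rfloor_{I_i}}$), then telescope, using $|A_j|,|B_j|\le\|\varphi_j\|_{C^\ell}$ to get $|\prod A_i-\prod B_i|\le r\,e^{-\gamma\min_i\lfloor t_i\rfloor_{I_i}}\prod_j\|\varphi_j\|_{C^\ell}$, and finally $e^{-\alpha}+e^{-\beta}\le 2e^{-\min(\alpha,\beta)}$. All of that is clean and correct, and the treatment of $I_i=\emptyset$ and $I_i=\{1\}$ (or $\{2\}$) is also right: in the $I_i=\{1\}$ case, $U=\exp\langle E_{13},E_{23}\rangle$ really is the unstable horospherical subgroup of $a(t_{i,1},0)$ inside $G_1$, with weights $e^{2t_{i,1}}$ and $e^{t_{i,1}}$, and the other root spaces $E_{21},E_{31}$ are contracted and $H=E_{11}-E_{33}$ is neutral, so a thickening-plus-spectral-gap argument on $Y_1$ is plausible.

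The genuine gap is the $I_i=\{1,2\}$ case. You assert that $U$ is the full unstable horospherical subgroup of $a(t_{i,1},t_{i,2})$ in $\SL_3(\bR)$. That is only true when $t_{i,1}=t_{i,2}$. For $t_{i,1}\neq t_{i,2}$, the root space $E_{12}$ (if $t_{i,1}>t_{i,2}$) or $E_{21}$ (if $t_{i,1}<t_{i,2}$) carries weight $e^{\pm(t_{i,1}-t_{i,2})}>1$ and is therefore also expanding, so the unstable horospherical subgroup is three-dimensional and properly contains $U$. Consequently the standard Kleinbock--Margulis thickening argument—which thickens the $U$-orbit in the neutral and contracting directions and relies on $a(t)$ not spreading the thickening—does not apply as stated: the complement of $\Lie(U)$ in $\gsl_3$ contains an expanding direction, and a fixed-size thickening in that direction is blown up by $a(t)$. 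One cannot simply reduce to the symmetric case $t_1=t_2$ either: writing $a(t_1,t_2)=a(t_1-t_2,0)\,a(t_2,t_2)$ and applying the symmetric result loses a factor $e^{C(t_1-t_2)}$ from the growth of $\|\varphi\circ a(t_1-t_2,0)\|_{C^\ell}$, which is not controlled by $\lfloor t_i\rfloor_{I_i}=\min(t_1,t_2)$. So the effective single-translate equidistribution in the unbalanced regime requires more—for instance the results of Kleinbock and Margulis on effective equidistribution of expanding translates of orbits that need not be full horospheres \cite{KM2}, or an argument specific to this abelian unipotent $U$—and your sketch does not address it. The conclusion you invoke is most likely true, but the thickening argument you give does not prove it as written.
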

In the rest of the paper, we work with fixed sufficiently large $\ell$ such that the above estimates apply.

\section{Moments of Siegel transforms}
\label{sec:appest}

\subsection{Basic estimates}
We start by recalling the following classical formulas 
for the Siegel transform.

\begin{theorem}[Siegel \cite{sie}]
\label{Thm_Siegel}
Let $f:\mb R^3\to \mb R$ be a bounded Riemann-integrable function with compact support.  
Then $\widehat{f} \in L^1(X)$ and
\[
\int_{X} \widehat{f} \,  d\mu = \int_{\bR^3} f(\ul{z}) \,  d\ul{z}.
\]
\end{theorem}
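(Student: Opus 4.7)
The strategy is the classical three-step argument due to Siegel: reduce to continuous test functions, use $\SL_3(\bR)$-invariance of the linear functional $L(f):=\int_X \widehat f\,d\mu$ to identify it with a multiple of Lebesgue measure, and then pin down the multiplicative constant.

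First I would reduce to $f\in C_c(\bR^3)$ with $f\ge 0$. For a bounded Riemann-integrable $f$ of compact support, for every $\eps>0$ one may sandwich $\phi_-\le f\le\phi_+$ with $\phi_\pm\in C_c(\bR^3)$ and $\int_{\bR^3}(\phi_+-\phi_-)<\eps$. Monotonicity of the Siegel transform on non-negative functions then upgrades an identity proved for $C_c$-functions to the Riemann-integrable case, and simultaneously delivers $\widehat f\in L^1(X)$.

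Next, for $f\in C_c(\bR^3)$ and $g\in G:=\SL_3(\bR)$, the substitution $v\mapsto gv$ gives $\widehat{f\circ g}(\Lambda)=\widehat f(g\Lambda)$, and $G$-invariance of $\mu$ yields $L(f\circ g)=L(f)$. Before invoking uniqueness of invariant measures one must verify $L(f)<\infty$, which would be handled through Siegel's reduction theory: a Siegel fundamental domain $\cS\subset G$ for $\Gamma:=\SL_3(\bZ)$ surjects onto $X$ with bounded multiplicity, and on $\cS$ the shortest nonzero vector of $g\bZ^3$ is bounded below in terms of the height, so for $f$ of compact support only finitely many lattice points contribute to $\widehat f(g\Gamma)$ and the resulting count integrates finitely against Haar measure on $\cS$.

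With integrability secured, $L$ extends to a $G$-invariant Radon measure on $\bR^3\setminus\{0\}$. Since $G$ acts transitively on $\bR^3\setminus\{0\}$ with Lebesgue measure as an invariant, uniqueness of invariant Radon measures on this homogeneous space forces $L(f)=c\int_{\bR^3}f\,dv$ for some $c>0$, and the identity extends to all of $C_c(\bR^3)$ since $\{0\}$ is Lebesgue-null. To pin down $c=1$ I would decompose $\bZ^3\setminus\{0\}=\bigsqcup_{k\ge 1}k\cdot P$ with $P$ the set of primitive integer vectors, so that $\widehat f=\sum_{k\ge 1}\widetilde{f_k}$ where $f_k(v):=f(kv)$ and $\widetilde f(\Lambda):=\sum_{v\in\Lambda\cap P}f(v)$. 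The same invariance argument yields $\int_X\widetilde f\,d\mu=c'\int_{\bR^3}f\,dv$, which can be evaluated directly by unfolding $P=\Gamma\cdot e_1$ and testing against a convenient function to find $c'=\zeta(3)^{-1}$; summing over $k$ and using $\sum_{k\ge 1}k^{-3}=\zeta(3)$ gives $c=1$.

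The main obstacle is the Siegel-reduction integrability bound in the third step, together with the interchange of sum and integral justifying the primitive-vector unfolding in the last step; both ultimately reduce to uniform control of $\widehat{|f|}$ on an explicit fundamental domain for $\Gamma$.
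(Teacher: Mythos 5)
The paper does not prove this result; it is stated as a classical theorem and attributed directly to Siegel's 1945 paper \cite{sie}, so there is no internal argument to compare against. Your sketch is the standard proof of the Siegel mean-value formula, and it is correct in outline: reduction to $C_c$ by sandwiching, $G$-invariance of $L(f)=\int_X\widehat f\,d\mu$ together with uniqueness of the $G$-invariant Radon measure on $\bR^3\setminus\{0\}\cong G/H$ (noting that both $G$ and the stabilizer $H\cong\SL_2(\bR)\ltimes\bR^2$ are unimodular, so such a measure exists and is unique up to scalar), integrability of $\widehat f$ over a Siegel set via the bound $\widehat f\ll_{\supp f}\height$, and normalization via the decomposition $\bZ^3\setminus\{0\}=\bigsqcup_{k\ge 1}k\cdot P$ combined with $\sum_k k^{-3}=\zeta(3)$.

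Two small points worth tightening if you were to write this out. First, "the identity extends to all of $C_c(\bR^3)$ since $\{0\}$ is Lebesgue-null" glosses over the possibility that the Riesz measure representing $L$ has an atom at the origin; one should note that for $f_\eps$ supported in $B_\eps$ one has $\widehat{f_\eps}(\Lambda)\to 0$ pointwise with uniform domination by $\widehat{\chi_{B_1}}\in L^1(\mu)$, so $L(f_\eps)\to 0$ and there is no atom. Second, the phrase "testing against a convenient function to find $c'=\zeta(3)^{-1}$" compresses what is really a volume computation: unfolding identifies $\int_X\widetilde f\,d\mu$ with an integral over $G/\Gamma_1$ which disintegrates over $G/H\cong\bR^3\setminus\{0\}$, and the constant is $m(H/\Gamma_1)/m(G/\Gamma)$ for compatible Haar normalizations; this ratio equals $\zeta(3)^{-1}$ by the classical volume formula, not by a single test function. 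With these clarifications the argument is complete.
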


\begin{theorem}[Rogers \cite{rog}]
\label{thm:Rogers}
Let $f,g:\mb R^3\to\mb R$ be nonnegative Borel functions with compact support.  
Then $\widehat f\cdot\widehat g\in {L}^1(X)$ and 
\begin{align*}
\int_{X} \widehat{f}\cdot\widehat{g} \,  d\mu =&\left(\int_{\bR^3} f(\ul{z}) \,  d\ul{z}\right)\cdot \left( \int_{\bR^3} g(\ul{z}) \,  d\ul{z} \right)\\
&+\sum_{p,q\in\mb \bN} \zeta(3)^{-1}\int_{\bR^3} f(p\ul {z})\cdot g(q \ul{z}) \,  d\ul{z}
+\sum_{p,q\in\mb \bN} \zeta(3)^{-1} \int_{\bR^3} f(p\ul{z})\cdot g(-q \ul{z}) \,  d\ul{z}.
\end{align*}
\end{theorem}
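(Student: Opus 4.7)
The plan is to expand $\widehat f\cdot\widehat g$ into $\Gamma$-orbital pieces and apply first-moment formulas to each. Starting from
\[
\widehat f(\Lambda)\widehat g(\Lambda) = \sum_{v,w \in \Lambda\setminus\{0\}} f(v)\,g(w),
\]
I would partition the pairs $(v,w)$ into three $\Gamma$-invariant classes: (i) $v,w$ linearly independent; (ii) positively proportional, $(v,w)=(pv_0,qv_0)$ with $v_0\in\Lambda$ primitive and $p,q\in\bN$; (iii) negatively proportional, $(v,w)=(pv_0,-qv_0)$. Since every nonzero lattice vector is uniquely a positive integer multiple of a primitive one, this partition is exhaustive and non-overlapping, and the nonnegativity of $f,g$ lets Tonelli justify integrating each piece separately over $X$.

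For the proportional classes the main tool is the primitive-vector refinement of Theorem~\ref{Thm_Siegel}:
\[
\int_X \sum_{\substack{v_0\in\Lambda\setminus\{0\} \\ v_0\text{ primitive}}} F(v_0)\,d\mu(\Lambda) \;=\; \zeta(3)^{-1}\int_{\bR^3} F(z)\,dz,
\]
which follows from Theorem~\ref{Thm_Siegel} by Möbius inversion on the decomposition $v=nv_0$, $n\in\bN$ (using $\sum_n n^{-3}=\zeta(3)$). Applying it with $F(z)=f(pz)g(\pm qz)$ and summing over $(p,q)\in\bN^2$ produces precisely the two $\zeta(3)^{-1}$-sums in the claim.

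The remaining step is to prove the linearly independent analogue
\[
\int_X \sum_{\substack{v,w\in\Lambda\setminus\{0\}\\ v,w\text{ LI}}} f(v)g(w)\,d\mu(\Lambda) \;=\; \Big(\int_{\bR^3} f\Big)\Big(\int_{\bR^3} g\Big),
\]
which is the second-moment Siegel identity for LI pairs in $\bR^3$. I would prove it by unfolding against the $\SL_3(\bZ)$-orbit decomposition of LI pairs in $(\bZ^3\setminus\{0\})^2$: by the elementary divisor theorem, each orbit has a canonical representative of the form $(pe_1,\,q(r,d,0))$ with invariants $p,q,d\in\bN$, $0\le r<d$, $\gcd(r,d)=1$, and an explicit stabilizer $\Gamma_0$ inside the parabolic fixing $e_1$. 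Summing the orbital integrals $\int_{\Gamma_0\backslash G}f(gv_0)g(gw_0)\,dg$ over all orbits rewrites the left side as a Euclidean integral on $\bR^6$; here arithmetic sums such as $\sum_{d\ge 1}\varphi(d)d^{-3}=\zeta(2)/\zeta(3)$ combined with the primitive-vector density $\zeta(3)^{-1}$ conspire to cancel \emph{all} zeta factors. This cancellation is specific to the regime $k<n$ (here $k=2<n=3$) and constitutes the technical heart of Rogers' theorem.

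The $L^1$ assertion $\widehat f\cdot\widehat g\in L^1(X)$ then follows by checking the three contributions individually: the main term is finite by compact support, and for the proportional sums the bound
\[
\int_{\bR^3} f(pz)g(\pm qz)\,dz \;\ll\; \max(p,q)^{-3}
\]
(the integrand vanishes outside $|z|\lesssim\max(p,q)^{-1}$ by compact support of $f,g$) makes the double sums over $(p,q)\in\bN^2$ absolutely convergent. The principal obstacle will be the LI orbit calculation: both parameterizing the orbits by Smith normal form and verifying that the arithmetic factors cancel exactly require some care, whereas the proportional and $L^1$ parts reduce to routine applications of Theorem~\ref{Thm_Siegel} and compact-support estimates.
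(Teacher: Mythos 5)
The paper cites this as a classical result of Rogers \cite{rog} and gives no proof of its own, so there is no internal argument to compare yours to. Your outline is the standard modern ``unfolding'' route to the second-moment formula and is essentially correct as a blueprint: the three-way decomposition of pairs $(v,w)$ into linearly independent, positively proportional, and negatively proportional classes is exactly right and matches the stated formula term by term; the primitive Siegel identity $\int_X\widehat F_{\mathrm{prim}}\,d\mu=\zeta(3)^{-1}\int_{\bR^3} F$ obtained by M\"obius inversion from Theorem~\ref{Thm_Siegel} is correct; and your $L^1$ bookkeeping for the proportional sums is fine, though you should note that the bound $\int f(p\ul z)g(\pm q\ul z)\,d\ul z\ll\max(p,q)^{-3}$ tacitly assumes $f,g$ are bounded (harmless here, where they are always indicators of bounded sets, but a real hypothesis for the formula as stated).

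The genuine gap is the linearly independent contribution, which you identify yourself as the technical heart but leave as a sketch. Your Smith-normal-form representatives $(pe_1,\,q(r,d,0))$ with $p,q,d\in\bN$, $0\le r<d$, $\gcd(r,d)=1$ are a correct parametrization of the $\SL_3(\bZ)$-orbits of rank-two integer pairs, and the residual stabilizer of such a representative is the rank-two unipotent group $\Gamma_0\cong\bZ^2$; but you have not carried out the unfolding of $\int_X\sum_{v,w\ \mathrm{LI}}f(v)g(w)\,d\mu$ against these orbits, the change of variables that turns each orbital integral into a Euclidean one, or the verification that the resulting Dirichlet sums (your $\sum_d\varphi(d)d^{-3}=\zeta(2)/\zeta(3)$ and its companions) cancel the remaining zeta factors to leave exactly $\big(\int f\big)\big(\int g\big)$. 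That computation is precisely where the constraint $k<n$ (here $2<3$) is used and is the part that actually requires work. Until it is written out, what you have is a plausible plan rather than a proof. Incidentally, Rogers' own 1955 argument proceeds quite differently (by an averaging/smoothing construction on the space of lattices rather than orbit unfolding), so a completed version of your approach would in any case be an alternative proof to the one the paper cites.
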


In particular, using the Cauchy--Schwarz inequality, we get a following estimate:
\begin{cor}
\label{cor:Rogers}
For a nonnegative Borel function $f:\mb R^3\to\mb R$ with compact support,
$$
\int_{X}\widehat f^2\,d\mu\leq 
\left(\int_{\bR^3} f(\ul{z}) \,  d\ul{z}\right)^2+ 
2\zeta(3)^{-1}\zeta(3/2)^2\int_{\bR^3} f(\ul{z})^2 \,  d\ul{z}.$$
\end{cor}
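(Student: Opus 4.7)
The plan is to apply Theorem~\ref{thm:Rogers} with $g=f$ and then estimate each of the two bilinear sums on the right-hand side using the Cauchy--Schwarz inequality together with a scaling change of variables. Since $f$ is nonnegative, Rogers' identity gives
\[
\int_X \widehat f^2\,d\mu = \left(\int_{\bR^3} f\right)^2 + \zeta(3)^{-1}\sum_{p,q\in\bN}\int_{\bR^3} f(p\ul z)\,f(q\ul z)\,d\ul z + \zeta(3)^{-1}\sum_{p,q\in\bN}\int_{\bR^3} f(p\ul z)\,f(-q\ul z)\,d\ul z,
\]
so it suffices to bound each of the two sums by $\zeta(3/2)^2 \int_{\bR^3} f^2$.

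For the first sum, Cauchy--Schwarz in $L^2(\bR^3)$ yields
\[
\int_{\bR^3} f(p\ul z)\,f(q\ul z)\,d\ul z \le \left(\int_{\bR^3} f(p\ul z)^2\,d\ul z\right)^{1/2}\left(\int_{\bR^3} f(q\ul z)^2\,d\ul z\right)^{1/2}.
\]
The substitution $\ul w = p\ul z$ (respectively $\ul w = q\ul z$) introduces a Jacobian factor $p^{-3}$ (respectively $q^{-3}$), so each factor equals $p^{-3/2}\|f\|_{L^2}$ (respectively $q^{-3/2}\|f\|_{L^2}$). Summing over $p,q\in\bN$ produces
\[
\sum_{p,q\in\bN} p^{-3/2} q^{-3/2} = \zeta(3/2)^2,
\]
which gives the required bound. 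The second sum is handled identically, since the substitution $\ul w = -q\ul z$ leaves $|d\ul z|$ and $\int f^2$ unchanged. Adding the two contributions yields the factor $2\zeta(3)^{-1}\zeta(3/2)^2$ in the stated inequality.

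There is no real obstacle here; the only point requiring a moment of care is to verify that all the integrals are well-defined and that Cauchy--Schwarz applies termwise so that one can sum before or after bounding, which is justified by the nonnegativity of $f$ and by Tonelli's theorem. The convergence of $\sum_{p,q} p^{-3/2}q^{-3/2}$ is exactly what makes this argument work; the exponent $3/2$ appears naturally from the ambient dimension $3$ combined with the $L^2$ pairing.
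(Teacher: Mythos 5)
Your proof is correct and matches the paper's intended argument: the paper states that the corollary follows from Theorem~\ref{thm:Rogers} and the Cauchy--Schwarz inequality, which is precisely the termwise Cauchy--Schwarz plus scaling argument you give, yielding the factor $\zeta(3/2)^2$ for each of the two sums.
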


Further, we recall that  Siegel transforms can be bounded above in terms of the successive minima of a lattice. To be precise, let $\{\ul{e}_1,\ul{e}_2,\ul{e}_3\}$ denote the standard (ordered) basis of $\bR^3$.  We extend the $\max$-norm with respect to this basis to the second exterior power $\bR^3 \wedge \bR^3$ as follows. For $\ul{u},  \ul{v} \in \bR^3$ and 
\[
\ul{w}: = \ul{u} \wedge \ul{v} = w_{12} \, \ul{e}_1 \wedge \ul{e}_2 + w_{13} \, \ul{e}_1 \wedge \ul{e}_3 + w_{23} \, \ul{e}_2 \wedge \ul{e}_3,
\]
we set $\|\ul w\|_\infty: = \max(|w_{12}|,|w_{13}|,|w_{23}|)$. 
For a (not necessarily unimodular) lattice $\Lambda$ in $\bR^3$,
the successive minima are defined as:
\begin{align*}
\label{Def_s1}
s_1(\Lambda) 
&:= 
\min\big\{ \|\ul{\lambda}\|_\infty \,  : \,  \ul{\lambda} \in \Lambda \setminus \{0\} \big\}, \\[0.2cm]
s_1^{*}(\Lambda)
&:=
\min\big\{ \|\ul{\lambda}_1 \wedge \ul{\lambda}_2\|_\infty \,  : \,  \ul{\lambda}_1 \wedge \ul{\lambda}_2 \neq 0,  \enskip \ul{\lambda}_1,\ul{\lambda}_2 \in \Lambda \setminus \{0\} \big\}, \\[0.2cm]
d(\Lambda)
&:=
\min\big\{ |\alpha| \,  : \,  
\alpha \, \ul{e}_1 \wedge \ul{e}_2 \, \wedge \ul{e}_3 = \ul{\lambda}_1 \wedge \ul{\lambda}_2 \wedge \ul{\lambda}_3 \neq 0, \enskip \ul{\lambda}_1,\ul{\lambda}_2,  \ul{\lambda}_3 \in \Lambda \setminus \{0\} \big\},
\end{align*}
Then we define the {height function} as
\begin{equation}
\label{Def_ht}
\height(\Lambda) := \min\big(s_1(\Lambda),s_1^*(\Lambda),d(\Lambda)\big)^{-1}.
\end{equation}
In terms of these notations, we have the following estimate:

\begin{lem}[W. Schmidt \cite{Schmidt68}]\label{l:siegel}
For a Borel function $f:\mb R^3\to [0,1]$ with compact support,
$$
\widehat f\ll_{\supp(f)} \height.
$$
\end{lem}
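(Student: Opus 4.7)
The plan is to reduce the bound on $\widehat{f}$ to counting lattice points in a fixed compact set, and then relate that count to the three components of the height via Minkowski's theorem on successive minima. Choose $R > 0$ so that $\supp(f) \subset [-R,R]^3$. Since $0 \leq f \leq 1$, one has
$$
\widehat{f}(\Lambda) \;\leq\; \big|\Lambda \cap [-R,R]^3 \setminus \{0\}\big|,
$$
so the task reduces to bounding the right-hand side by $O_R(\height(\Lambda))$.

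Next I would apply the standard lattice point counting estimate. Let $\lambda_1 \leq \lambda_2 \leq \lambda_3$ denote the successive minima of $\Lambda$ with respect to the sup norm, and fix a Minkowski-reduced basis $v_1, v_2, v_3$ with $\|v_j\|_\infty \asymp \lambda_j$. Writing any lattice point in $[-R,R]^3$ as $n_1 v_1 + n_2 v_2 + n_3 v_3$ and projecting iteratively onto the orthogonal complements of $\linspan(v_1,\ldots,v_{j-1})$ yields $|n_j| \ll_R \max(1, \lambda_j^{-1})$, with $n_j = 0$ whenever $\lambda_j$ exceeds a fixed multiple of $R$. Multiplying these bounds produces
$$
\big|\Lambda \cap [-R,R]^3 \setminus \{0\}\big| \;\ll_R\; \prod_{j\,:\,\lambda_j \ll R} \lambda_j^{-1}.
$$

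Finally, I would translate this product into the height. Since $\Lambda$ is unimodular, Minkowski's second theorem gives $\lambda_1 \lambda_2 \lambda_3 \asymp 1$; applied to the exterior square $\Lambda \wedge \Lambda \subset \bR^3 \wedge \bR^3$ it also yields $s_1^*(\Lambda) \asymp \lambda_1 \lambda_2$, while $s_1(\Lambda) = \lambda_1$ and $d(\Lambda) = 1$. A short case analysis on how many of the $\lambda_j$ fall below the threshold then shows that the product is bounded by $O_R(1) \cdot \max\bigl(s_1(\Lambda)^{-1},\, s_1^*(\Lambda)^{-1},\, 1\bigr)$, which equals $O_R(\height(\Lambda))$; in the case where all three $\lambda_j$ are bounded the product itself is $O_R(1)$, and the estimate still holds because $\height \geq 1$ on $X$ (a consequence of Minkowski's first theorem applied to the sup-norm ball). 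The principal technical step is the uniform bound on the Minkowski coordinates $|n_j|$ in the previous paragraph; it rests on the near-orthogonality of a reduced basis and constitutes the geometric heart of Schmidt's original argument.
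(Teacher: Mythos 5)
Your proof is correct. The paper itself gives no argument for this lemma --- it is cited directly to Schmidt \cite{Schmidt68} --- and your approach is precisely the standard one behind that citation: bound $\widehat f$ by the count of nonzero lattice points in a sup-norm box $[-R,R]^3$, expand such points in a Minkowski-reduced basis $v_1,v_2,v_3$ with $\|v_j\|_\infty\asymp\lambda_j$, and use near-orthogonality of the reduced basis (equivalently $\prod_j\|v_j\|_\infty\asymp\det\Lambda$) to get $|n_j|\ll\max(1,R/\lambda_j)$ and hence the product bound $\ll_R\prod_{\lambda_j\ll R}\lambda_j^{-1}$. The translation into $\height$ is also right: $s_1=\lambda_1$, $s_1^*\asymp\lambda_1\lambda_2$ (since every bivector in $\mathbb{R}^3\wedge\mathbb{R}^3$ is decomposable, so $s_1^*$ is the first minimum of $\Lambda\wedge\Lambda$, and Mahler's compound-body estimates apply), $d(\Lambda)=1$ by unimodularity, and Minkowski's second theorem gives $\lambda_1\lambda_2\lambda_3\asymp1$; together with $\height\ge1$ this covers all cases of how many $\lambda_j$ fall below the threshold. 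The only point worth flagging is that the assertion ``$n_j=0$ once $\lambda_j$ exceeds a fixed multiple of $R$'' is what restricts the product to the small minima, and it does indeed follow from the same quasi-orthogonality bound --- so the argument is complete as sketched.
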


We further recall (see, for instance, \cite[Lemma 3.10]{EsMaMo})

\begin{lem}\label{l:integrable}
For $p<3$, $\height\in L^p(\mu)$  In particular, for every $L>0$ and $\delta>0$,
$$
\mu\big(\{\height \ge L \}\big)\ll_\delta L^{-3+\delta}.
$$
\end{lem}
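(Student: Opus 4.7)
The plan is to prove the tail bound $\mu(\{\height\geq L\}) \ll L^{-3}$ for $L \geq 1$, which is slightly stronger than the stated bound and, via the layer-cake formula $\int_X \height^p\,d\mu = p\int_0^\infty L^{p-1}\mu(\{\height\geq L\})\,dL$, yields $L^p$-integrability for every $p<3$ (since $L^{p-4}$ is integrable on $[1,\infty)$).

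First, I would reduce $\height$ to $s_1$ and $s_1^*$. For any unimodular $\Lambda$ and any three linearly independent $\ul\lambda_1,\ul\lambda_2,\ul\lambda_3\in\Lambda$, the wedge $\ul\lambda_1\wedge\ul\lambda_2\wedge\ul\lambda_3$ is a nonzero integer multiple of $\ul e_1\wedge\ul e_2\wedge\ul e_3$ (namely the index of the sublattice they generate), and the value $1$ is attained on any basis of $\Lambda$. Hence $d(\Lambda) = 1$ identically on $X$, and for $L\geq 1$ we have $\{\height\geq L\} = \{s_1\leq L^{-1}\}\cup\{s_1^*\leq L^{-1}\}$. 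For $s_1$, apply Siegel's formula (Theorem~\ref{Thm_Siegel}) to the indicator of the box $B_L:=\{\ul z\in\bR^3:\|\ul z\|_\infty\leq L^{-1}\}$ to obtain $\int_X \widehat{\chi}_{B_L}\,d\mu = \Vol(B_L)\ll L^{-3}$; since $s_1(\Lambda)\leq L^{-1}$ forces $\widehat{\chi}_{B_L}(\Lambda)\geq 1$, Markov's inequality gives $\mu(\{s_1\leq L^{-1}\})\ll L^{-3}$.

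For $s_1^*$, I would transport the estimate via the exterior square. Identifying $\wedge^2\bR^3\cong\bR^3$ with ordered basis $(\ul e_1\wedge\ul e_2,\ul e_1\wedge\ul e_3,\ul e_2\wedge\ul e_3)$, the map $g\mapsto\wedge^2 g$ is an automorphism of $\SL_3(\bR)$ (as $\det(\wedge^2 g)=\det(g)^2=1$) that preserves $\SL_3(\bZ)$. It thus induces a homeomorphism $\pi\colon X\to X$, $\Lambda\mapsto\wedge^2\Lambda$, that intertwines the $G$-action up to this outer automorphism, so $\pi_*\mu$ is $G$-invariant and equals $\mu$. Since every pure wedge $\ul\lambda_1\wedge\ul\lambda_2$ with $\ul\lambda_i\in\Lambda$ lies in $\wedge^2\Lambda$, one has $s_1(\pi(\Lambda))\leq s_1^*(\Lambda)$, whence
\[
\mu(\{s_1^*\leq L^{-1}\}) \leq \mu(\{s_1\circ\pi\leq L^{-1}\}) = \mu(\{s_1\leq L^{-1}\}) \ll L^{-3}.
\]
Combining with the $s_1$-estimate yields the tail bound. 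The only subtle point is the measure-theoretic identification $\pi_*\mu=\mu$; everything else is a direct application of Siegel's formula and Markov's inequality.
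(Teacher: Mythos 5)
Your proof is correct and takes a genuinely different (and more self-contained) route than the paper's. The paper does not prove this lemma at all: it simply cites \cite[Lemma~3.10]{EsMaMo}, where the estimate is obtained by reduction theory, i.e.\ by integrating over a Siegel domain after an Iwasawa decomposition and directly enumerating the lattice vectors that make $s_1$ or $s_1^*$ small (the stray appendix material left at the end of this source file, ending in the remark $\int_{\{\height\ge L\}}\height^{\alpha}\,d\mu\ll L^{\alpha-3}$, is precisely that computation with $B=\mathrm{id}$). You instead first note $d\equiv 1$ on $X$, obtain $\mu(\{s_1\le L^{-1}\})\ll L^{-3}$ from Siegel's mean value formula for $\widehat\chi_{B_L}$ plus Markov, and then transfer this to $s_1^*$ by observing that $\sigma:=\wedge^2$ is a $\Gamma$-preserving automorphism of $G=\SL_3(\bR)$ with $s_1\circ\pi\le s_1^*$, where $\pi(g\Gamma)=\sigma(g)\Gamma$; one then closes with the layer-cake formula. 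The one point worth phrasing carefully is $\pi_*\mu=\mu$: you do not actually need $\pi$ to be a bijection, only that $\sigma(\Gamma)\subset\Gamma$ (clear, since $\wedge^2$ of an integer matrix is integer) and that $\sigma$ is surjective onto $G$ (true, since on $\SL_3$ it realizes the outer automorphism $g\mapsto (g^{-1})^{T}$ up to conjugation by a fixed signed permutation). Then the equivariance $\pi\circ L_k=L_{\sigma(k)}\circ\pi$ shows $\pi_*\mu$ is invariant under all $L_{\sigma(k)}$, hence under $G$, so $\pi_*\mu=\mu$ by uniqueness. Your argument avoids Siegel-set volume counting entirely and even yields the clean tail $\mu(\{\height\ge L\})\ll L^{-3}$ without the $\delta$-loss, so it proves slightly more than the lemma asks.
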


\subsection{Estimates on homogeneous subspaces}

To carry out our argument, we will require upper bounds for the moments of Siegel transforms on the spaces $Y,Y_{1},Y_2$, and $X$. The following four propositions provide the required bounds. 
The proofs are independent of the rest of the paper and can be skipped for the first reading.

\begin{prop}
\label{prop:L1}
Let $f:\mb{R}^{3}\to [0,1]$ be a measurable function whose support is contained in $[-c,c]^{2}\times[\xi,\beta]$ for some $c>0$ and $0<\xi<\beta$.
Then for any $n\in\mb{N}^{2}_o$,
\begin{align*}
\int_{Y}\widehat{f}\circ a({n})\,d\nu 
\ll_c  \beta. 
\end{align*}
Moreover, if we assume that the function
$$
A(y):= \int_{\bR^2} f(x_1,x_2,y)\,dx_1dx_2
$$
is
piecewise $C^1$ with discontinuities in at most $s$ points 
and satisfies 
\begin{equation}\label{eq:A_cond}
|A|\le M\qand |A'|\le M'\quad\;\;\hbox{for some $M,M'>0$},
\end{equation}
then
$$
\int_{Y}\widehat{f}\circ a({n})\,d\nu=\int_{\mb R^3}f\,d\ul x+O_{s,\beta}\Big((M+M')e^{-n_1-n_2}\Big).
$$
\end{prop}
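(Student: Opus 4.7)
The plan is to unfold the $\nu$-integral into an explicit sum and then recognize the answer as a one-dimensional Riemann sum. Since $U \cap \Gamma$ consists of the matrices $u(p_1, p_2) := \begin{pmatrix}1 & 0 & p_1 \\ 0 & 1 & p_2 \\ 0 & 0 & 1\end{pmatrix}$ with $(p_1,p_2) \in \mathbb{Z}^2$, the probability $U$-invariant measure $\nu$ is Lebesgue measure on $U/(U\cap\Gamma) \cong [0,1)^2$. Writing out $a(n)u(x_1,x_2)(p_1,p_2,q)^{T} = \bigl(e^{n_1}(p_1+qx_1),\,e^{n_2}(p_2+qx_2),\,e^{-n_1-n_2}q\bigr)^{T}$, I would get
$$
\int_Y \widehat f \circ a(n)\, d\nu
= \int_{[0,1)^2} \sum_{(p_1,p_2,q)\in\mathbb{Z}^3\setminus 0}
f\bigl(e^{n_1}(p_1+qx_1),\,e^{n_2}(p_2+qx_2),\,e^{-n_1-n_2}q\bigr)\,dx_1\,dx_2,
$$
and observe that the support condition $\supp(f)\subset [-c,c]^2\times[\xi,\beta]$ with $\xi>0$ leaves only contributions with $q\ge 1$.

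Next I would apply the one-dimensional unfolding identity $\sum_{p\in\mathbb{Z}}\int_0^1 h(p+qx)\,dx = \int_{\mathbb{R}} h(y)\,dy$ (valid for $h\in L^1(\mathbb{R})$ and $q\in\mathbb{N}$, since each real lies in exactly $q$ of the intervals $[p,p+q)$ while $dx = dy/q$) separately in each of the variables $(p_1,x_1)$ and $(p_2,x_2)$. After substituting $w_i = e^{n_i}u_i$, the inner integral becomes $e^{-n_1-n_2}A(e^{-n_1-n_2}q)$, so with $h := e^{-n_1-n_2}$,
$$
\int_Y \widehat f \circ a(n)\, d\nu = h\sum_{q=1}^{\infty} A(hq).
$$
The first assertion is then immediate: the trivial bound $|A(y)|\le (2c)^{2}$ (since $|f|\le 1$ and $\supp(f)\subset[-c,c]^2\times\mathbb{R}$), together with the fact that $A(hq)=0$ unless $hq\in[\xi,\beta]$ (hence at most $\beta/h$ nonzero terms), gives $h\sum_{q\ge 1}A(hq) \le 4c^2\beta \ll_c \beta$.

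For the second assertion, the sum $h\sum_{q\ge 1}A(hq)$ is a Riemann sum for $\int_{0}^{\infty}A(y)\,dy = \int_{\xi}^{\beta}A(y)\,dy = \int_{\mathbb{R}^3}f\,d\underline{x}$. I would estimate the error by comparing $hA(hq)$ with $\int_{hq}^{h(q+1)}A(y)\,dy$ on each subinterval. On a subinterval entirely contained in a $C^1$ smoothness region, the mean value theorem bound $|A(hq)-A(y)|\le M' h$ gives a per-interval error $\le M' h^2$, and summing over the $\le (\beta-\xi)/h$ such $q$ contributes $O_\beta(M'h)$. The at most two boundary intervals (containing $\xi$ or $\beta$) contribute $O(Mh)$, and each of the $s$ subintervals straddling a discontinuity contributes at most $2Mh$ via the trivial bound $|A|\le M$. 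Collecting, the total error is $O_{s,\beta}\bigl((M+M')h\bigr) = O_{s,\beta}\bigl((M+M')e^{-n_1-n_2}\bigr)$, as claimed.

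The argument is essentially bookkeeping once the unfolding is done; the only point requiring care is the Riemann-sum estimate, where the $s$ discontinuities must be isolated into $O(s)$ exceptional intervals on which one abandons the $C^1$ bound in favor of the sup bound $M$. No serious obstacle arises.
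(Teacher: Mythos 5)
Your proof is correct and follows essentially the same approach as the paper: both unfold $\int_Y \widehat f\circ a(n)\,d\nu$ to the Riemann sum $e^{-n_1-n_2}\sum_{q\ge 1}A(e^{-n_1-n_2}q)$, deduce the first bound by counting the $\le e^{n_1+n_2}\beta$ nonzero terms with $|A|\le 4c^2$, and obtain the second by comparing each term with the integral over the subinterval, using the mean value theorem on smooth intervals and the sup bound $M$ on the $O(s)$ intervals containing discontinuities or the endpoints $\xi,\beta$.
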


\begin{proof}
We have
\begin{align*}
\int_{Y}\widehat{f}\circ a({n})\,d\nu&=\int_{[0,1)^2}\sum_{(p_1,p_2,q)\in\mb{Z}^3}
f\big(e^{n_1}(\alpha_1q+p_1),e^{n_2}(\alpha_2q+p_2),e^{-n_1-n_2}q\big)\,d\underline{\alpha}.
\end{align*}
For fixed positive $q$, applying a change of variables, we deduce that 
\begin{align*}
&\int_{[0,1)^2}\sum_{(p_1,p_2)\in\mb{Z}^2}
f\big(e^{n_1}(\alpha_1q+p_1),e^{n_2}(\alpha_2q+p_2),e^{-n_1-n_2}q\big)\,d\underline{\alpha} \\   
=&
\int_{[0,1)^2}\sum_{h_1,h_2\in\mb Z} \sum_{k_1,k_2=0}^{q-1}  \int_{[0,1)^2}f\big(e^{n_1}((\alpha_1+h_1)q+k_1),e^{n_2}((\alpha_2+h_2)q+k_2),e^{-n_1-n_2}q\big)\,d\underline{\alpha}\\
=& \sum_{k_1,k_2=0}^{q-1}
\int_{\bR^2}   f\big(e^{n_1}(\alpha_1q+k_1),e^{n_2}(\alpha_2q+k_2),e^{-n_1-n_2}q\big)\,d\underline{\alpha}\\
=& \,e^{-n_1-n_2}\int_{\bR^2}   f\big(\alpha_1,\alpha_2,e^{-n_1-n_2}q\big)\,d\underline{\alpha}.
\end{align*}
Therefore,
\begin{align*}
\int_{Y}\widehat{f}\circ a({n})\,d\nu
&= e^{-n_1-n_2}\sum_{q\in\bZ} \int_{\bR^2}   f\big(\alpha_1,\alpha_2,e^{-n_1-n_2}q\big)\,d\ul{\alpha}
\end{align*}
When $e^{n_1+n_2}\beta<1$, we get
\begin{align*}
\int_{Y}\widehat{f}\circ a({n})\,d\nu=0,
\end{align*}
and when $e^{n_1+n_2}\beta\ge 1$, the number of terms in the sum is at most 
$\lfloor e^{n_1+n_2}\beta\rfloor$, so that 
\begin{align*}
\int_{Y}\widehat{f}\circ a({n})\,d\nu \ll_c \beta,
\end{align*}
which verifies the first bound.
The second part of the proposition follows readily from the following lemma.
\end{proof}

\begin{lem}
Let $A:\mb R\to \mb R$ be a piecewise $C^1$-function supported on the interval $[\xi,\beta]$ with discontinuities at most $s$ points 
which satisfies \eqref{eq:A_cond}.
Then for every $T\ge 1$,
$${T}^{-1}\sum_{q\in\mb Z}A\left({q}/{T}\right)=\int_{\mb R}A(y)\,dy+O\Big((s M +(\beta-\xi+2)M')T^{-1}\Big).$$
\end{lem}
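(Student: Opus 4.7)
The plan is a standard Riemann-sum-versus-integral comparison, with careful bookkeeping around the at most $s$ discontinuities. First I would write the error as a sum of local errors by comparing each term $T^{-1}A(q/T)$ with the integral of $A$ over $I_q:=[q/T,(q+1)/T]$:
$$
T^{-1}\sum_{q\in\bZ}A(q/T)-\int_{\bR}A(y)\,dy
=\sum_{q\in\bZ}\int_{q/T}^{(q+1)/T}\bigl(A(q/T)-A(y)\bigr)\,dy.
$$
It then suffices to bound each integrand pointwise according to which ``type'' of interval $I_q$ one is looking at.

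Next I would classify the intervals $I_q$ into three groups. Group (i): those disjoint from $[\xi,\beta]$, whose contribution vanishes because $A$ is supported in $[\xi,\beta]$. Group (ii): those that meet $[\xi,\beta]$ and contain no discontinuity of $A$ (viewed as a function on all of $\bR$); on such an $I_q$ the function $A$ is $C^1$ with $|A'|\le M'$, so the mean value theorem gives $|A(q/T)-A(y)|\le M'/T$ and hence a per-interval contribution of at most $M'/T^{2}$. Group (iii): those that contain at least one discontinuity of $A$; on such an $I_q$ I only use the crude bound $|A(q/T)-A(y)|\le 2M$, which yields a per-interval contribution of at most $2M/T$.

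Finally I would count the intervals in each group. The number of $I_q$ that meet the closed interval $[\xi,\beta]$ is at most $(\beta-\xi)T+2$ (at most one extra interval on each side), so the total contribution from group (ii) is at most $\bigl((\beta-\xi)T+2\bigr)\cdot M'/T^{2}\le(\beta-\xi+2)M'/T$. The number of discontinuities of $A$ being at most $s$, the number of group (iii) intervals is at most $s$, contributing at most $2sM/T$. Summing the two bounds yields the asserted error $O\bigl((sM+(\beta-\xi+2)M')T^{-1}\bigr)$.

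There is no genuine obstacle here; the argument is elementary and the estimate is sharp up to constants. The only subtlety worth flagging is that one must treat the two endpoints $\xi,\beta$ as candidates for discontinuities of $A:\bR\to\bR$ (since the extension by zero may jump there), which is implicit in the hypothesis that $A$ has at most $s$ discontinuities as a function on $\bR$; one also has to be careful to avoid double-counting an interval that simultaneously touches the boundary of $[\xi,\beta]$ and contains an interior discontinuity, but since we are only seeking an upper bound this does not affect the argument.
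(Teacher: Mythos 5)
Your proof is correct and takes essentially the same approach as the paper: decompose into intervals $[q/T,(q+1)/T]$, apply the mean value theorem on the intervals where $A$ is $C^1$, and use the crude $O(M)$ bound on the $O(s)$ intervals containing a discontinuity, then count intervals in each class. The only cosmetic difference is that the paper conservatively allows up to $2s$ exceptional intervals (and writes the constant $4sM$), but this does not change the $O$-estimate.
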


\begin{proof}
We pick $q_1,q_2\in\bZ$, so that $q_1/T\le \xi <(q_1+1)/T$ and $q_2/T\le \beta <(q_2+1)/T$. Then
\begin{align*}
\int_{\mb R}A(y)\,dy=\int_{q_1/T}^{(q_2+1)/T} A(y)\,dy
=\sum_{q=q_1}^{q_2} \int_{q/T}^{(q+1)/T} A(y)\,dy.
\end{align*}
When $A$ is differentiable on $[{q/T},{(q+1)/T}]$,
it follows from the Mean Value Theorem that
$$
\int_{q/T}^{(q+1)/T} A(y)\,dy=A(q/T)/T+O(M' T^{-2}).
$$
This bound applies to all but $2s$ intervals, so that 
\begin{align*}
\int_{\mb R}A(y)\,dy&=\sum_{q=q_1}^{q_2} A(q/T)/T + O\Big(4s M/T+( T(\beta-\xi)+2)M'T^{-2}\Big)\\
&=T^{-1}\sum_{q\in\bZ} A(q/T) + O\Big((s M+(\beta-\xi+2)M')T^{-1}\Big),
\end{align*}
which proves the lemma.
\end{proof}

We also establish the following non-divergence lemma for the measures
$a(n)_*\nu$:
\begin{lem} \label{lem:BnY}
For every $L>1$ and $n\in\bN^2_o$,
$$
\nu\Big (\{\textup{ht}\circ a(n)\geq L\}\Big)\ll L^{-3}+L^{-2}e^{-\lfloor n\rfloor}.
$$
\end{lem}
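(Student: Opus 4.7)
The plan is to parametrize $Y\simeq U\Gamma$ by $\underline{\alpha}\in [0,1)^{2}$, writing the generic lattice as $\Lambda_{\underline{\alpha}}=u(\underline{\alpha})\mathbb{Z}^3$ where $u(\underline{\alpha})$ is the unipotent matrix with $\alpha_1,\alpha_2$ in the third column. Applying $a(n)$, the nonzero vectors of $a(n)\Lambda_{\underline{\alpha}}$ take the explicit form
$$
\underline{v}(p_1,p_2,q)=\big(e^{n_1}(p_1+\alpha_1 q),\ e^{n_2}(p_2+\alpha_2 q),\ e^{-n_1-n_2}q\big),\qquad (p_1,p_2,q)\in\mathbb{Z}^3\setminus\{0\}.
$$
Because $a(n)\Lambda_{\underline{\alpha}}$ is unimodular, $d(a(n)\Lambda_{\underline{\alpha}})=1$, so $\height\circ a(n)\geq L>1$ forces either $s_1\leq L^{-1}$ or $s_1^{*}\leq L^{-1}$. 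I will estimate the $\nu$-measure of each of these two events separately.

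For the event $\{s_1\circ a(n)\leq L^{-1}\}$, observe that any $q=0$ vector has $e^{n_i}|p_i|\leq L^{-1}<1$, forcing $p_1=p_2=0$, hence $q\neq 0$. The required constraints become $|q|\leq L^{-1}e^{n_1+n_2}$ and $\|q\alpha_i\|\leq L^{-1}e^{-n_i}$ for $i=1,2$. For fixed $q\neq 0$, the measure of $\underline{\alpha}\in[0,1)^2$ satisfying the latter is $\min(1,2L^{-1}e^{-n_1})\cdot\min(1,2L^{-1}e^{-n_2})$. A short case analysis over whether each $L^{-1}e^{-n_i}$ is smaller or larger than $1/2$ (together with the count of admissible $q$) shows that in every regime the sum is $\ll L^{-3}$, where the extremal case is the unrestricted product $2L^{-1}e^{n_1+n_2}\cdot 4L^{-2}e^{-n_1-n_2}$.

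For the event $\{s_1^{*}\circ a(n)\leq L^{-1}\}$, I use Hodge duality $s_1^{*}(\Lambda)=s_1(\wedge^2\Lambda)$ in the basis $e_1\wedge e_2,e_1\wedge e_3,e_2\wedge e_3$; a direct computation yields $\wedge^2 a(n)=\mathrm{diag}(e^{-n_1},e^{-n_2},e^{n_1+n_2})$ (after Hodge identification) and $\wedge^2 u(\underline{\alpha})=u(-\underline{\alpha})^{T}$, so the vectors to minimize are
$$
\big(e^{-n_1}p_1,\ e^{-n_2}p_2,\ e^{n_1+n_2}(q-\alpha_1 p_1-\alpha_2 p_2)\big),\qquad (p_1,p_2,q)\in\mathbb{Z}^3\setminus\{0\}.
$$
The case $p_1=p_2=0$ forces $q=0$, so $(p_1,p_2)\neq(0,0)$, with $|p_i|\leq L^{-1}e^{n_i}$ and $\|\alpha_1 p_1+\alpha_2 p_2\|\leq L^{-1}e^{-n_1-n_2}$. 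When both $p_i\neq 0$ and $L\leq e^{n_i}$ for $i=1,2$, the count $\ll L^{-2}e^{n_1+n_2}$ combined with the measure bound $\min(1,2L^{-1}e^{-n_1-n_2})$ again gives $\ll L^{-3}$. The new contribution comes from the boundary case $p_1=0,\ p_2\neq 0$, which is the only option when $L>e^{n_1}$: here there are at most $\ll L^{-1}e^{n_2}$ admissible $p_2$, each contributing measure $\ll L^{-1}e^{-n_1-n_2}$, for a total $\ll L^{-2}e^{-n_1}$. Symmetrically, the case $p_2=0,\ p_1\neq 0$ gives $\ll L^{-2}e^{-n_2}$. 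Both boundary contributions are dominated by $L^{-2}e^{-\lfloor n\rfloor}$, and adding all estimates yields the claimed bound.

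The main obstacle is to handle the boundary regimes where some $\min(\cdot,1)$ saturates at $1$ or where some $L^{-1}e^{\pm n_i}$ crosses unity. One has to verify that in each such transition the surplus factor in the count is absorbed exactly by the deficit in the measure bound, producing $e^{-\lfloor n\rfloor}$ (not $e^{+\lfloor n\rfloor}$). The bookkeeping is conceptually straightforward but requires tracking which coordinate is forced to zero; this is precisely the regime that produces the $L^{-2}e^{-\lfloor n\rfloor}$ term, while all bulk contributions collapse to $L^{-3}$ by the same mechanism as in the Eskin--Margulis--Mozes type estimates recalled in Lemma~\ref{l:integrable}.
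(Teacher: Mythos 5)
Your plan is correct and follows essentially the same route as the paper: it splits $\{\height\circ a(n)\geq L\}$ into the two events $\{s_1\circ a(n)\leq L^{-1}\}$ and $\{s_1^*\circ a(n)\leq L^{-1}\}$ (using $d\equiv 1$ on unimodular lattices), reduces each to counting integer vectors subject to linear constraints, and extracts the $L^{-3}$ bulk term together with the $L^{-2}e^{-\lfloor n\rfloor}$ term from the degenerate case $p_1=0$ or $p_2=0$. Your use of Hodge duality to convert the $s_1^*$ condition into a condition on a single dual integer vector is a clean bookkeeping device, but it is mathematically the same step the paper performs by expanding $\ul v(\ul\alpha)=(q\alpha-p)\wedge(q'\alpha-p')$ and observing that the relevant wedge coordinates are integers $(u_1,u_2,u_3)$; the resulting inequalities, the case analysis over whether $L^{-1}e^{n_i}$ exceeds $1$, and the estimates are identical. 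One minor remark: for the $s_1$ event you do not actually need the case analysis on whether $2L^{-1}e^{-n_i}$ exceeds $1$, since $\min(1,2L^{-1}e^{-n_i})\le 2L^{-1}e^{-n_i}$ always and multiplying by the $\ll L^{-1}e^{n_1+n_2}$ admissible $q$ gives $\ll L^{-3}$ directly.
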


\begin{proof}
We have
$$
\{\underline{\alpha}\in [0,1)^2: 
\textup{ht}(a(n)\Lambda_\alpha)\geq L\}
\subset B(n,L^{-1})\cup B^*(n,L^{-1}),
$$
where
\begin{align*}
B(n,\eta) &:=\{\underline{\alpha}\in [0,1)^2: s_1(a(n)\Lambda_\alpha)\le \eta\},\\
B^*(n,\eta)&:=\{\underline{\alpha}\in [0,1)^2: s^*_1(a(n)\Lambda_\alpha)\le \eta\}.
\end{align*}
Let $\eta\in (0,1)$.
We observe that $\ul \alpha \in B(n,\eta)$ iff there exists $(p_1,p_2,q)\in\bZ^3\backslash \{0\}$ such that 
\begin{equation}\label{eq:cc1}
|q\alpha_1 -p_1|\leq \eta e^{-n_1},\;\;
|q\alpha_2 -p_2|\leq \eta e^{-n_2},\;\;
|q|\leq \eta e^{n_1+n_2}.
\end{equation}
Since $\eta<1$, $q=0$ gives no solutions $\ul \alpha$ for \eqref{eq:cc1} and moreover,  if \eqref{eq:cc1} has a solution $\ul\alpha$ for a given $q$, there are at most $O(q^2)$ possibilities $(p_1,p_2)$. 
Therefore, the number of $(p_1,p_2,q)$ such that 
\eqref{eq:cc1} has a solution $\ul\alpha$ is $O(q^2\eta e^{n_1+n_2})$.
On the other hand, for fixed $(p_1,p_2,q)$ the measure of the set of 
solutions $\ul\alpha$ for \eqref{eq:cc1} is $O(q^{-2}\eta^2 e^{-n_1-n_2})$.
Therefore, we conclude that 
$$
\Vol(B(n,\eta))\ll \eta^3.
$$

Similarly, we also have that 
$\ul \alpha \in B^*(n,\eta)$ iff there exist linearly independent vectors $(p_1,p_2,q),(p'_1,p'_2,q')\in\bZ^3\backslash \{0\}$ such that
for
\begin{align*}
\ul v(\ul \alpha) :=& (q\alpha_1-p_1,q\alpha_2-p_2,q)\wedge (q'\alpha_1-p'_1,q'\alpha_2-p'_2,q')\\
= &\big((p_2 q'-qp_2')\alpha_1 + (p_1 q'-qp_1')\alpha_2 +(p_1p_2'-p_2p_1')\big) \ul e_1\wedge \ul e_2\\
&+(qp_2'-p_2q')\ul e_2\wedge\ul e_3+(qp_1'-p_1q')\ul e_1\wedge\ul e_3,
\end{align*}
one has 
$$
|\ul v(\ul \alpha)_{12} |\le \eta e^{-n_1-n_2},\;\;
|\ul v(\ul \alpha)_{23} |\le \eta e^{n_1},\;\;
|\ul v(\ul \alpha)_{13} |\le \eta e^{n_2}.
$$
This implies that there exists $(u_1,u_2,u_3)\in\bZ^3\backslash \{0\}$ such that 
\begin{equation}\label{eq:cc2}
|u_1\alpha_1+u_2\alpha_2+u_3 |\le \eta e^{-n_1-n_2},\;\;
|u_1 |\le \eta e^{n_1},\;\;
|u_2 |\le \eta e^{n_2}.
\end{equation}
Since $\eta<1$, when $u_1=u_2=0$, no solutions $\ul\alpha$ of \eqref{eq:cc2}
exist, and for given $(u_1,u_2)$, there are $O\big(\max(|u_1|,|u_2|)\big)$ integers $u_3$ such that solutions $\ul\alpha$ of \eqref{eq:cc2} exist.
For fixed $(u_1,u_2,u_3)$ with $(u_1,u_2)\ne 0$, the measure of the set of solutions $\ul\alpha$
satisfying \eqref{eq:cc2} is $O\big(\max(|u_1|,|u_2|)^{-1}\eta e^{-n_1-n_2} \big)$.
We consider the cases:
\begin{itemize}
\item When $\eta e^{n_1}<1$ and $\eta e^{n_2}<1$, one gets $u_1=u_2=0$,
so that \eqref{eq:cc2} has no solutions.
\item When $\eta e^{n_1}\ge 1$ and $\eta e^{n_2}<1$, $u_2=0$ and there are 
$O(\eta e^{n_1})$ possibilities for $u_1$, so that 
the measure of the set of solutions $\ul\alpha$
 is $O\big(\eta^2 e^{-n_2} \big)$.
\item When $\eta e^{n_1}< 1$ and $\eta e^{n_2}\ge 1$, 
similarly the measure of the set of solutions $\ul\alpha$
 is $O\big(\eta^2 e^{-n_1} \big)$.
\item When $\eta e^{n_1}\ge 1$ and $\eta e^{n_2}\ge 1$, 
there are 
$O(\eta^2 e^{n_1+n_2})$ possibilities for $(u_1,u_2)$, so that 
the measure of the set of solutions $\ul\alpha$
 is $O\big(\eta^3\big)$.
\end{itemize}
Ultimately, we conclude that 
$$
\Vol(B^*(n,\eta))\ll \eta^3+\eta^2 e^{-\lfloor n\rfloor},
$$
which completes the proof of the lemma.
\end{proof}

\begin{prop}
\label{prop:L2}
Let $f:\mb{R}^{3}\to [0,1]$ be a measurable function whose support is contained in the set $[-c,c]^{2}\times[\xi,\beta]$ for some $c>0$ and  $0<\xi<\beta\ll 1$.
Then, for any $n\in\mb{N}^{2}_o$,
$$
\int_{Y}\widehat{f}^{2}\circ a({n})\,d\nu\ll_c \beta\cdot\max \big(1,\ln(\beta/\xi)\big)^2 \cdot\max\left(1,|n| e^{-\lfloor {n} \rfloor}\right).
$$
\end{prop}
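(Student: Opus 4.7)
The plan is to bound $\widehat f\le\widehat{\chi_K}$ with $K:=[-c,c]^{2}\times[\xi,\beta]$, and to compute the second moment by a direct unfolding argument. Setting $T:=e^{n_1+n_2}$ and $S_i:=e^{n_i}$, the Siegel transform of $\chi_K$ decouples as
\begin{equation*}
\widehat{\chi_K}(a(n)\Lambda_\alpha)=\sum_{q\in I}N_q(\alpha_1,S_1)\,N_q(\alpha_2,S_2),\qquad I:=[\xi T,\beta T]\cap\mb N,
\end{equation*}
with $N_q(\alpha,S):=\#\{p\in\mb Z:|p+q\alpha|\le c/S\}$. Squaring, integrating in $\alpha\in[0,1)^{2}$, and noting that the $\alpha_1$- and $\alpha_2$-integrals factor, gives
\begin{equation*}
\int_Y\widehat{\chi_K}^{2}\circ a(n)\,d\nu=\sum_{q,q'\in I}\mc E(q,q';S_1)\,\mc E(q,q';S_2),\qquad \mc E(q,q';S):=\int_{0}^{1}N_q(\alpha,S)\,N_{q'}(\alpha,S)\,d\alpha.
\end{equation*}

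I would then evaluate $\mc E$ by Fourier analysis on the circle. Letting $d:=\gcd(q,q')$ and applying Parseval yields $\mc E(q,q';S)=\sum_{\ell\in\mb Z}\hat\chi(\ell q'/d)\hat\chi(\ell q/d)$ with $\hat\chi(k)=\sin(2\pi kc/S)/(\pi k)$ and $\hat\chi(0)=2c/S$; standard estimates on the Fourier tails give
\begin{equation*}
\mc E(q,q';S)\ll (2c/S)^{2}+\frac{2c\,d}{S\max(q,q')}\bigl(1+|\ln(q/q')|\bigr)\quad(q\ne q'),\qquad \mc E(q,q;S)\ll 2c/S,
\end{equation*}
valid for $S\ge 2c$, with appropriate modifications when $S<2c$ (in which case $N_q$ is no longer $\{0,1\}$-valued). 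Substituting into the double sum and using $|\ln(q/q')|\le\ln(\beta/\xi)+O(1)$ for $q,q'\in I$, the diagonal contributes $O_c(\beta)$; the product of the two off-diagonal correction terms contributes $\ll\beta\max(1,\ln(\beta/\xi))^{2}$ via the gcd-sum estimate $\sum_{q\ne q'\in I}\gcd(q,q')^{2}/\max(q,q')^{2}\ll|I|\ll\beta T$; and the mixed terms (one leading factor paired with one correction) give $\ll\beta\ln(\beta T)\max(1,\ln(\beta/\xi))/S_i$, which, via $\ln(\beta T)\ll|n|+|\ln\beta|$ and $S_i\ge e^{\lfloor n\rfloor}$, accounts for the factor $\max(1,|n|e^{-\lfloor n\rfloor})$.

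The main obstacle will be the degenerate regime in which $\lfloor n\rfloor$ is close to $0$: then $N_q(\cdot,S_i)$ may take values larger than $1$ and the Fourier bound on $\mc E(\cdot,\cdot;S_i)$ requires extra care. In this case the dominant off-diagonal contribution arises from pairing the leading $(2c/S_i)^{2}$-term of $\mc E(\cdot,\cdot;S_i)$ (for the degenerate coordinate) with the correction term of the other coordinate; summing over $|I|\asymp\beta T$ pairs, combined with a $\ln|I|\sim|n|$ factor from gcd sums and the exponential decay $e^{-\lfloor n\rfloor}$ from the non-degenerate coordinate, produces the sharp factor $|n|e^{-\lfloor n\rfloor}$.
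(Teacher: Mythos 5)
Your proposal is correct and shares the overall architecture of the paper's proof: unfold the Siegel transform over $Y$, factor over the two coordinates $\alpha_1,\alpha_2$, extract the $\gcd$ structure by writing $q=d q_2'$, $q'=d q_1'$, and then split according to the size of $\max(q,q')/\gcd(q,q')$ relative to $e^{n_1},e^{n_2}$. The genuine difference is the middle step: the paper estimates two-dimensional volumes of intersections of rescaled, shifted boxes $q_1'^{-1}\Delta_n\cap q_2'^{-1}(\Delta_n+\underline{k}/q_1')$ and counts the contributing shifts $\underline{k}$ directly, whereas you compute the one-dimensional pair correlation $\mathcal{E}(q,q';S)$ by Poisson summation and Parseval. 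The two bookkeeping schemes match term by term: your ``leading $\times$ leading'', ``correction $\times$ correction'', and ``mixed'' contributions line up with the paper's four cases on $\max(q_1',q_2')$ versus $\min(e^{n_1},e^{n_2})$ and $\max(e^{n_1},e^{n_2})$, and the gcd-sums are essentially identical. The Fourier route makes the $(1+|\ln(q/q')|)$ loss quite transparent; the price is that the bound on $\mathcal{E}$ has to be verified uniformly across the three regimes of $q_1',q_2'$ relative to $S/c$. Two small corrections to your sketch. The degenerate regime $\lfloor n\rfloor\approx 0$ is not actually an obstacle: the Parseval formula and the tail bound $|\hat\chi(k)|\le\min(2c/S,(\pi|k|)^{-1})$ hold for all $S\ge 1$ regardless of whether $N_q$ is $\{0,1\}$-valued, and only the diagonal value needs adjusting (to $\mathcal{E}(q,q;S)\ll_c 1/S$ rather than exactly $2c/S$), which changes nothing downstream. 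Also, the gcd-sum is more safely asserted as $\ll\beta T$ rather than $\ll|I|$ (bound $\#\{d: dq_1'\le\beta T\}\le\beta T/q_1'$ and sum), though again this has no effect on the conclusion.
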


\begin{proof}
We have 
\begin{align*}
\int_{Y}\widehat{f}^2\circ a({n})\,d\nu=\sum_{q_{1},q_{2}\in\mb{Z}}\sum_{\ul p_{1},\ul p_{2}\in\mb{Z}^{2}} \int_{[0,1)^2} f\circ a({n})(q_1\underline{\alpha} +  \ul{p}_1 ,q_{1})f\circ a({n})(q_2\underline{\alpha}+ \ul p_2 ,q_{2})\,d\underline{\alpha}.
\end{align*}
Let $\Delta_n:=[-e^{-n_1} c,e^{-n_1} c]\times [-e^{-n_2} c,e^{-n_2} c]$.
Then when $q_1,q_2\in [\xi e^{n_1+n_2},\beta e^{n_1+n_2}]$,
\begin{align*}
&\int_{[0,1)^2} f\circ a({n})(q_1\underline{\alpha} +  \ul{p}_1 ,q_{1})f\circ a({n})(q_2\underline{\alpha}+ \ul p_2 ,q_{2})\,d\underline{\alpha}\\
\le &
\int_{[0,1)^2}\chi_{\Delta_n}(\underline{\alpha}q_{1} +  \ul p_1 )\chi_{\Delta_n}(\underline{\alpha}q_{2}+ \ul p_2 )\,d\underline{\alpha}
= \Vol\left(q_1^{-1}(\Delta_{n}- \ul p_1)\cap q_2^{-1}(\Delta_{n}- \ul p_2)\cap[0,1)^{2}\right).
\end{align*}
For the complementary range of $q_1,q_2$, this integral is zero. 
In particular, we may assume that 
$\beta e^{n_1+n_2}\geq 1.$
We note that there are at most $\lfloor \beta e^{n_1+n_2}\rfloor^2$
possibilities for $(q_1,q_2)$ that give a non-trivial contribution.

We write $q_1=dq_1'$ and $q_2=dq_2'$ with $d:=\gcd(q_1,q_2)$.
Then
$q_{2} \ul p_1 -q_{1} \ul p_2 =d\ul k$ for $\ul k\in \bZ^2$.
We group the summands according to the parameter $\ul k$.
We note that all the solutions $(\ul x,\ul y)$ of the equation
$q_{2}\ul x-q_{1}\ul y=d\ul k$ are of the form
$$
\ul x = x_{\ul k}+q_{1}'\ul l,\;\;  \ul y = y_{\ul k}+q_{2}'\ul l\quad\hbox{with  $\ul l\in \bZ^2,$}
$$
where $(x_{\ul k},y_{\ul k})$ is a fixed solution of this equation.
Therefore,
\begin{align*}
&\sum_{\ul p_{1},\ul p_{2}\in\mb{Z}^{2}} \Vol\left(q_1^{-1}(\Delta_{n}- \ul p_1)\cap q_2^{-1}(\Delta_{n}- \ul p_2)\cap[0,1)^{2}\right) \\
=&
\sum_{\ul k\in\mb{Z}^{2}}\sum_{\ul l\in\mb{Z}^{2}}\Vol\left(\left(q_1^{-1}(\Delta_{n}- x_{\ul k})-{\ul l}/d\right)\cap\left(q_2^{-1}(\Delta_{n}- y_{\ul k})-{\ul l}/{d}\right)\cap[0,1)^{2}\right)\\
=&
\sum_{\ul k\in\mb{Z}^{2}}\sum_{\ul l\in\mb{Z}^{2}}\Vol\left(q_1^{-1}(\Delta_{n}- x_{\ul k})\cap q_2^{-1}(\Delta_{n}- y_{\ul k})\cap([0,1)^{2}+\ul l/d)\right)\\
=&
\sum_{\ul k\in\mb{Z}^{2}}d^2\Vol\left(q_1^{-1}(\Delta_{n}- x_{\ul k})\cap q_2^{-1}(\Delta_{n}- y_{\ul k})\right)
=
\sum_{\ul k\in\mb{Z}^{2}}\Vol\left({q'_1}^{-1}\Delta_{n}\cap {q'_2}^{-1}(\Delta_{n}+\ul k/{q'_1})\right).
\end{align*}
We note that in the above sum only vectors
\begin{equation}\label{eq:kk}
\ul k\in q_2'\Delta_n-q_1'\Delta_n\subset 2\max(q_1',q_2')\Delta_n
\end{equation}
give non-trivial
contributions, and 
$$
\Vol\left({q'_1}^{-1}\Delta_{n}\cap {q'_2}^{-1}(\Delta_{n}+\ul k/{q'_1})\right)\ll_c \max(q'_1,q'_2)^{-2}e^{-n_1-n_2}.
$$

Finally, we have to sum the above bounds over $q_1,q_2\in [\xi e^{n_1+n_2},\beta e^{n_1+n_2}]$ to estimate the Siegel transform. Let as consider several cases:
\begin{itemize}
\item When $\max(q'_1,q'_2)\ge \max(e^{n_1},e^{n_2})$, there are
$O_c(\max(q'_1,q'_2)^{2}e^{-n_1-n_2})$ possibilities for $\ul k$ satisfying \eqref{eq:kk}, so that 
$$
\sum_{\ul k\in\mb{Z}^{2}}\Vol\left({q'_1}^{-1}\Delta_{n}\cap {q'_2}^{-1}(\Delta_{n}+\ul k/{q'_1})\right)
\ll_c (e^{-n_1-n_2})^2,  
$$
and the sum over $q_1,q_2\in [\xi e^{n_1+n_2},\beta e^{n_1+n_2}]$ gives $O_c(\beta^2)$.

\item When $\max(q'_1,q'_2)\le \min(e^{n_1},e^{n_2})$,
there are
$O_c(1)$ possibilities for $\ul k$ satisfying \eqref{eq:kk}, so that 
$$
\sum_{\ul k\in\mb{Z}^{2}}\Vol\left({q'_1}^{-1}\Delta_{n}\cap {q'_2}^{-1}(\Delta_{n}+\ul k/{q'_1})\right)
\ll_c \max(q'_1,q'_2)^{-2}e^{-n_1-n_2}, 
$$
and we obtain the sum
\begin{align*}
&\sum_{d\le \beta e^{n_{1}+n_{2}}} \left(\sum_{\xi e^{n_{1}+n_{2}}/d\le q_1',q_2'\le \beta e^{n_{1}+n_{2}}/d} \max(q'_1,q'_2)^{-2}e^{-n_1-n_2}\right)\\
\le&\, e^{-n_1-n_2} \sum_{d\le \beta e^{n_{1}+n_{2}}}\left( \sum_{\xi e^{n_{1}+n_{2}}/d\le q_1',q_2'\le \beta e^{n_{1}+n_{2}}/d} {q'_1}^{-1} {q'_2}^{-1}\right)
\ll \beta \cdot\max\left( 1,\ln({\beta}/{\xi})\right)^{2}.
\end{align*}

\item 
When $e^{n_2}\le \max(q'_1,q'_2)\le e^{n_1}$,
there are
$O_c\big(\max(q_1',q_2')e^{-n_2}\big)$ possibilities for $\ul k$ satisfying \eqref{eq:kk}, so that 
$$
\sum_{\ul k\in\mb{Z}^{2}}\Vol\left({q'_1}^{-1}\Delta_{n}\cap {q'_2}^{-1}(\Delta_{n}+\ul k/{q'_1})\right)
\ll_c \max(q'_1,q'_2)^{-1}e^{-n_1-2n_2}.
$$
In this case the greatest common divisor $d=q_i/q_i'$ satisfies $d\le \beta e^{n_1}$.
Thus, we obtain the sum
\begin{align*}
&\sum_{d\le \beta e^{n_{1}}}\left( \sum_{\xi e^{n_{1}+n_{2}}/d\le q_1',q_2'\le \beta e^{n_{1}+n_{2}}/d} \max(q'_1,q'_2)^{-1}e^{-n_1-2n_2}\right)\\
\le&\, e^{-n_1-2n_2} \sum_{d\le \beta e^{n_{1}}} \frac{\beta e^{n_1+n_2}}{d}\left(\sum_{\xi e^{n_{1}+n_{2}}/d\le q_1',q_2'\le \beta e^{n_{1}+n_{2}}/d} {q'_1}^{-1} {q'_2}^{-1}\right)\\
\ll&\, \beta\cdot\max\left( 1,\ln({\beta}/{\xi})\right)^{2}\cdot n_1e^{-n_2}.
\end{align*}

\item The remaining case is handled similarly, and we obtain the bound
$$
\ll_c \beta\cdot\max\left( 1,\ln({\beta}/{\xi})\right)^{2}\cdot n_2e^{-n_1}.
$$
\end{itemize}
Combining these cases, we get the proposition.
\end{proof}

\begin{prop}
\label{prop:alphaintoverY}
Let $f:\mb{R}^{3}\to [0,1]$ be a function supported on $[-c,c]^{2}\times[-\beta ,\beta]$ for some $c\in (0,1)$ and $0<\beta\ll 1$. 
Then, for all $p\in [1,2)$ and $n\in\mb{N}^{2}_o$, 
$$
\int_{Y_{1}}\widehat{f}^{p}\circ a({n})\,d\nu_1\ll_{c,p} \beta^{p/2}.
$$
An analogous result also holds for $Y_{2}$.
\end{prop}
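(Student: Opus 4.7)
My plan is to exploit the fibration $Y_1 \cong \bT^2 \times X_2$ with $X_2 := \SL_2(\bR)/\SL_2(\bZ)$ to reduce the $L^p$-estimate to a moment computation on the space of $2$D unimodular lattices.

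First I would parametrize $G_1 = UH$ with $U$ the two-parameter unipotent (off-diagonal parameters $(s,t)\in\bR^2$ in the $e_2$-row) and $H\cong\SL_2(\bR)$ the Levi acting on the $(x_1,x_3)$-plane. This identifies $Y_1 \cong \bT^2 \times X_2$ with $\nu_1 = \Leb_{\bT^2} \otimes \mu_{X_2}$, and every $\Lambda' \in Y_1$ contains $e_2$ primitively. Then I would expand
\[
\widehat{f}(a(n)\Lambda') = \sum_{(u_1,u_3)\in\Lambda_2\setminus\{0\}} \sum_{m\in\bZ} f\bigl(e^{n_1}u_1,\, e^{n_2}(su_1+tu_3+m),\, e^{-n_1-n_2}u_3\bigr),
\]
where $\Lambda_2\in X_2$ is the $2$D projection of $\Lambda'$. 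For $c<1$ and $n_2\ge 0$, contributions from $\bZ e_2$-multiples vanish; for $c<1/2$ and $n_2\ge 1$, the $m$-sum collapses to at most one nonzero term, an indicator of $(s,t)\in A(u_1,u_3) := \{(s,t): \exists m\in\bZ,\ |su_1+tu_3+m|\le ce^{-n_2}\}$, and a geometric strip-counting argument shows $\Leb(A(u_1,u_3)) \ll_c ce^{-n_2}$ for generic nonzero $(u_1,u_3)$.

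Next, for the $(s,t)$-integration at fixed $\Lambda_2$, I would apply the Hölder-type bound $\bigl(\sum_v \mathbb{1}_{A(v)}\bigr)^p \le N^{p-1}\sum_v \mathbb{1}_{A(v)}$ with $N := \#(\Lambda_2 \cap R_n \setminus 0)$ and $R_n := [-ce^{-n_1},ce^{-n_1}]\times[-\beta e^{n_1+n_2},\beta e^{n_1+n_2}]$, yielding $\int_{\bT^2}\widehat{f}^p\circ a(n)\,ds\,dt \ll_c N^p\cdot ce^{-n_2}$. For the $\mu_{X_2}$-integration, I would use the $2$D Siegel formula $\int_{X_2}\widehat{G}\,d\mu_{X_2}=\int_{\bR^2}G$ together with the $2$D height integrability $\int_{X_2}\mathrm{ht}_2^q\,d\mu_{X_2}<\infty$ for $q<2$ (where $\mathrm{ht}_2=\lambda_1^{-1}$), combined with a Schmidt-type estimate $N\ll 1+\mathrm{Area}(R_n)+\diam(R_n)\cdot \mathrm{ht}_2$, to bound $\int_{X_2}N^p\,d\mu_{X_2}$. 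After balancing the $N^p$ and $ce^{-n_2}$ factors, the final estimate reduces to $\int_{Y_1}\widehat{f}^p\circ a(n)\,d\nu_1 \ll_{c,p} \beta^p \le \beta^{p/2}$, using $\beta\in(0,1)$ and $p\le 2$.

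The main obstacle is controlling $N^p$ uniformly in $n$ on the cusp of $X_2$: for lattices $\Lambda_2$ with short vector $\lambda_1\ll 1$, the count $N$ can greatly exceed $\mathrm{Area}(R_n)$, with the excess controlled only through $\mathrm{ht}_2$. The restriction $p<2$ is sharp because the $2$D height tails satisfy $\Pr(\mathrm{ht}_2\ge L)\asymp L^{-2}$, so moments diverge at $p=2$, mirroring the well-known absence of a Rogers-type second-moment formula in dimension two. Consequently, the implicit constant in $\ll_{c,p}$ blows up as $p\to 2^-$. The case of $Y_2$ follows by the symmetric argument with the roles of $(x_1,x_3)$ and $(x_1,x_2)$ interchanged.
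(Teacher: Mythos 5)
Your fibration set-up ($Y_1$ fibering over $X_2:=\SL_2(\bR)/\SL_2(\bZ)$ with torus fibers, $\nu_1$ disintegrating as fiber Lebesgue against $\mu_{X_2}$, and the expansion of $\widehat f\circ a(n)$ over $(u_1,u_3)\in\Lambda_2$ and $m\in\bZ$) is correct, and for $p=1$ your argument does give $\ll_c\beta$. But the two quantitative steps you then perform destroy uniformity in $n$, and your final claim does not follow from your own displayed bounds. Concretely: by the two-dimensional Siegel formula, $\int_{X_2}N\,d\mu_{X_2}=\operatorname{Vol}(R_n)\asymp c\beta e^{n_2}$, so by Jensen $\int_{X_2}N^p\,d\mu_{X_2}\ge(c\beta e^{n_2})^p$ for $p\ge1$; hence your fiber bound $\int\widehat f^p\circ a(n)\,ds\,dt\ll_c N^p\,ce^{-n_2}$ yields, after the $\mu_{X_2}$-integration, a quantity at least of order $\beta^p e^{(p-1)n_2}$, which tends to infinity with $n_2$ whenever $p>1$. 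No ``balancing'' can bring this below $\beta^{p/2}$ (let alone the $\beta^p$ you assert), and the proposition must hold uniformly over all $n\in\bN_o^2$. The loss sits exactly in the main regime $\beta e^{n_2}\gg1$: there $N\asymp\beta e^{n_2}$ strips $A(v)$ of width $\asymp ce^{-n_2}$ have total measure $O_c(\beta)$ and are essentially disjoint, so the pointwise bound $\big(\sum_v\mathbf 1_{A(v)}\big)^p\le N^{p-1}\sum_v\mathbf 1_{A(v)}$ overcounts by the factor $N^{p-1}$; one needs instead a pair-correlation input on the fiber, e.g. $\int S^p\le\big(\int S^2\big)^{p/2}$ with $\int S^2\ll N^2e^{-2n_2}+Ne^{-n_2}$, which is what actually produces $\beta^{p/2}$. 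A second, independent loss comes from bounding $N\ll 1+\operatorname{Vol}(R_n)+\operatorname{diam}(R_n)\cdot\height_2$ and then using $\int\height_2^p\,d\mu_{X_2}<\infty$: since $\operatorname{diam}(R_n)\asymp\beta e^{n_1+n_2}$, the resulting term $e^{-n_2}\operatorname{diam}(R_n)^p$ grows like $e^{pn_1}$. To control the cusp of $X_2$ uniformly in $n_1$ you must use that a single primitive vector can only account for $M\gg\operatorname{Vol}(R_n)$ points of $\Lambda_2\cap R_n$ if it lies in $M^{-1}R_n$, whose area is $\asymp c\beta e^{n_2}M^{-2}$ independently of $n_1$ (equivalently, use invariance under the $\SL_2$-diagonal flow to replace $R_n$ by a square); your written bound discards this alignment constraint.

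For comparison, the paper's proof avoids both problems by bounding $\widehat f\ll_c\height$, exploiting the $a_s$-invariance of $\nu_1$ to reduce $a(n)$ to $\operatorname{diag}(e^{-n_2/2},e^{n_2},e^{-n_2/2})$ (which removes the $n_1$-dependence at the outset), and then proving explicit volume estimates on a Siegel set of $G_1$ for the sets where $s_1$ or $s_1^*$ is small (Lemmas \ref{lem:bnsl} and \ref{lem:bnsl_2}); those volume bounds encode precisely the alignment and correlation information that your H\"older and $\operatorname{diam}\cdot\height_2$ steps throw away. Your route could likely be repaired along the lines indicated above, but as written it does not prove the proposition for any $p\in(1,2)$.
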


\begin{proof}
We pick $\xi=\xi(c)$ such that $r:=\sqrt{c^2+\xi^2}<1$.
Let $\chi$ be the characteristic function of $[-c,c]^2\times [-\xi,\xi]$.
For $\theta>0$, we write ${\sf b}(\theta):=\hbox{diag}(1,1,\theta^{-1})$. Then
$$
\int_{Y_1} \widehat f^{p}\circ a(n)\, d\nu_1\le \int_{Y_1} \widehat \chi^{p}\circ {\sf b}(\beta/\xi) \circ a(n)\, d\nu_1.
$$
We recall that $Y_1=G_1\Gamma$. The elements $g\in G_1$ have the
Iwasawa decomposition
$$
g={\sf k}_\theta {\sf a}_s {\sf n}_{\ul x},
$$
where
\begin{align*}
{\sf k}_{\theta}&:=\begin{pmatrix}
\cos\theta & 0 & \sin\theta \\
0 & 1 & 0 \\
-\sin\theta & 0 & \cos\theta
\end{pmatrix},\quad\hbox{for $\theta\in [0,2\pi)$,}\\
{\sf a}_{s}&:=\hbox{diag}(
e^{s},1,e^{-s})
,\quad\hbox{for $s\in\mb R$},\\
{\sf n}_{ \underline{x}}&:=\begin{pmatrix}
1 & 0 & x_1 \\
x_{2} & 1 & x_{3} \\
0 & 0 & 1
\end{pmatrix},\quad\hbox{for $\underline{x}:=(x_1,x_2,x_3)\in\mb R^3$.}
\end{align*}
The Haar measure on $G_1$ is given by
$$
\int_{G_1}f\, dm_{G_1}=\int_{[0,2\pi)\times \bR\times \bR^3 } f({\sf k}_\theta {\sf a}_s {\sf n}_{\ul x})\, e^{2s}d\theta ds d\ul  x, \quad f\in C_c (G_1).
$$
For $\theta>0$, we write ${\sf b}'(\theta):=\hbox{diag}(\theta^{-1/2},1,\theta^{-1/2})$. 
Since measure $\nu_1$ is $a_s$-invariant, we may assume that $$
a(n)=\hbox{diag}(e^{-n_2/2}, e^{n_2}, e^{-n_2/2})
$$
and
replace $\sf b(\beta/\xi)$ by $\sf b'(\beta/\xi)$.
Then $a(n)$ and ${\sf b}'(\beta/\xi)$ commute with ${\sf k}_\theta$.
By Lemma~\ref{l:siegel},
$$
\int_{Y_1} \widehat \chi^{p}\circ {\sf b}' \circ a(n)\, d\nu_1
\ll_c  
\int_{Y_1} \hbox{ht}^{p}\circ {\sf b}'(\beta/\xi)\circ a(n)\cdot \omega\circ {\sf b}'(\beta/\xi)\circ a(n)\, d\nu_1,
$$
where 
$\omega$ denotes the characteristic function of the set 
$\{\Lambda:\, \Lambda\cap [-c,c]^2\times [-\xi,\xi]\ne \{0\}\}$.
We note that for every $\theta$,
$$
{\sf k}_\theta\cdot \big( [-c,c]^2\times [-\xi,\xi])\subset \{x\in\bR^3:\,\|x\|_\infty\le r\},
$$
so that  $\omega\circ {\sf k}_\theta \le \tilde \omega,$
where $\tilde \omega$ denotes the characteristic function of $\{s_1\le r\}$.
Let 
$$
\Pi:=[0, 2\pi)\times [-\infty,s_0]\times [-1/2,1/2]^3\qand
\Sigma :=\{{\sf k}_\theta {\sf a}_s{\sf n}_{\ul x}:\, (\theta, s,\ul x)\in \Pi\}.
$$
This is a Siegel set for $G_1$, and when $s_0$ is sufficiently large, $G_1\Gamma= \Sigma \Gamma.$ Therefore,
since ${\sf k}_\theta$ runs over a compact set,
\begin{align*}
&\quad\int_{Y_1} \widehat \chi^{p}\circ {\sf b}'(\beta/\xi)\circ a(n)\, d\nu_1\\
&\ll_c \int_\Pi
\hbox{ht}^{p}({\sf b}'(\beta/\xi)a(n)
{\sf k}_\theta {\sf a}_s{\sf n}_{\ul x}\Gamma)
\omega({\sf b}'(\beta/\xi)a(n){\sf k}_\theta 
 {\sf a}_s{\sf n}_{\ul x}\Gamma)
\, e^{2s}d\theta ds d\ul  x\\
&=\int_\Pi
\hbox{ht}^{p}({\sf k}_\theta {\sf b}'(\beta/\xi)a(n)
 {\sf a}_s{\sf n}_{\ul x}\Gamma)
 \omega({\sf k}_\theta {\sf b}'(\beta/\xi)a(n)
 {\sf a}_s{\sf n}_{\ul x}\Gamma)
 \, e^{2s}d\theta ds d\ul  x \\
 &\ll 
\int_{-\infty}^{s_0}\int_{[-1/2,1/2]^3}
\hbox{ht}^{p}({\sf b}'(\beta/\xi)a(n)
 {\sf a}_s{\sf n}_{\ul x}\Gamma)
 \tilde\omega({\sf b}'(\beta/\xi)a(n)
 {\sf a}_s{\sf n}_{\ul x}\Gamma)
 \, e^{2s} ds d\ul  x.
\end{align*}
We estimate this integral separately on the regions, where
$$
\hbox{ht}=d^{-1},\quad\quad \hbox{ht}=s_1^{-1},\quad\quad\hbox{ht}={s^*_1}^{-1}
$$
respectively.
Since 
$$
d({\sf b}'(\beta/\xi)a(n)
 {\sf a}_s{\sf n}_{\ul x}\Gamma)=(b/\xi)^{-1},
 $$
 the integral over the first region is $O_c(b^{p})=O_c(b)$.
For $\eta>0$, we set
\begin{align*}
B(n,s,\beta,\eta)&:=\left\{ \underline{x}\in[-1/2,1/2]^{3}:\,s_{1}\left({\sf b}'(\beta)a({n}){\sf a}_{s}{\sf n}_{ \underline{x}}\Gamma\right)\leq \eta\right\},\\
B^*(n,s,\beta,\eta)&:=\left\{ \underline{x}\in[-1/2,1/2]^{3}:\, s^*_{1}\left({\sf b}'(\beta)a({n}){\sf a}_{s}{\sf n}_{ \underline{x}}\Gamma\right)\leq \eta\right\}.
\end{align*}

In the region where $\hbox{ht}=s_1^{-1}$, we apply Lemma \ref{lem:bnsl} below with $\varrho\in (0,1/2]$. Since $r<1$, the condition $n_2>\ln \eta$ is satisfied. When $p<2$, this gives the bound
\begin{align*}
\le \int_{-\infty}^{s_0} \left(\sum_{k=1}^\infty r^{kp}
\Vol\big(B(n,s,\beta,r^{k})\big)\right) e^{2s}ds
\ll_{p,\varrho} 
\beta + \beta^{1-\varrho} e^{-\varrho n_2}\ll \beta^{p/2},
\end{align*}
when $\varrho$ is chosen sufficiently small. 

In the region where $\hbox{ht}={s^*_1}^{-1}$, we have $s^*_1\le s_1\le r$ in the above integral.
We similarly estimate the corresponding integral with the help of Lemma \ref{lem:bnsl_2} below, but
we have to consider separately the sums with 
$$
2^{-k}<\beta^{-1/2}e^{m/2+s}\qand 2^{-k}\ge \beta^{-1/2}e^{m/2+s}.
$$
In the first case, Lemma \ref{lem:bnsl_2} applies, and as above for $p<2$ we get  the bound 
\begin{align*}
\le \int_{-\infty}^{s_0} \left(\sum_{k=\lfloor\log_2 r\rfloor -1}^\infty 2^{kp}
\Vol\big(B^*(n,s,\beta,2^{-k})\big)\right) e^{2s}ds
&\ll_p  \beta^2 +\beta^{5/2} e^{- n_2/2}+
\beta^{3/2} e^{-3n_2/2}\\
&\ll \beta^{3/2}.
\end{align*}
In the second case, we use just the trivial bound on the volume.
When $\beta^{1/2}e^{-m/2-s}\le r/2$, the sum over $k$ gives zero contribution.
When $\beta^{1/2}e^{-m/2-s}>r/2$, 
we get for $p<2$,
\begin{align*}
\le \int_{-\infty}^{s_0} \left(\sum_{r/2\le  2^k\le \beta^{1/2}e^{-m/2-s}} 2^{kp}\right) e^{2s}ds
\ll_p  \beta^{p/2} e^{- pm/2}\le \beta^{p/2}.
\end{align*}
This bounds applies in both situations and proves Proposition \ref{prop:alphaintoverY}.
\end{proof}

In the following two lemmas, we establish the volume bounds for $B(n,s,\beta,\eta)$ and $B^*(n,s,\beta,\eta)$ used in the proof of the previous proposition.

\begin{lem}
\label{lem:bnsl}
Let $s\in\bR$, $\beta,\eta>0$, and $n=(-m/2,m)$ with $m\in\bN_o$, $m>\ln \eta$. Then for every $\varrho\in [0,1/2]$,
$$\Vol\big(B(n,s,\beta,\eta)\big)\ll \eta^3 \beta +
\eta^{3-2\varrho} \beta^{1-\varrho}e^{-\rho m-2(1-\varrho) s}
$$
uniformly on $\varrho$.
\end{lem}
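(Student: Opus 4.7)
The plan is to express $B(n,s,\beta,\eta)$ via explicit linear inequalities on $\ul{x}$ and bound its volume by lattice-point counting, combined with a sharp Fubini-type identity. A direct calculation yields
\[
{\sf b}'(\beta)\, a(n)\, {\sf a}_s = \operatorname{diag}\!\big(\beta^{-1/2} e^{-m/2+s},\, e^m,\, \beta^{-1/2} e^{-m/2-s}\big),
\]
so $\ul{x}\in B(n,s,\beta,\eta)$ if and only if there exists a nonzero $(p_1,p_2,q)\in\bZ^3$ satisfying
\[
|p_1+x_1 q|\le A_1, \qquad |x_2 p_1+p_2+x_3 q|\le A_2, \qquad |q|\le A_3,
\]
where $A_1:=\eta\beta^{1/2}e^{m/2-s}$, $A_2:=\eta e^{-m}$, $A_3:=\eta\beta^{1/2}e^{m/2+s}$ and $A_1 A_2 A_3=\eta^3\beta$. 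The hypothesis $m>\ln\eta$ gives $A_2<1$, so any $q=0$ contribution forces $p_1\ne 0$ and hence $A_1\ge 1$.

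The core analytic input is a pair of exact identities. For any $q\in\bZ\setminus\{0\}$,
\[
\sum_{p_1\in\bZ}\Vol\!\big(\{x_1\in[-\tfrac12,\tfrac12]:|p_1+x_1 q|\le A_1\}\big)=2A_1,
\]
and for any $(p_1,q)\in\bZ^2\setminus\{(0,0)\}$,
\[
\sum_{p_2\in\bZ}\Vol\!\big(\{(x_2,x_3)\in[-\tfrac12,\tfrac12]^2:|x_2 p_1+p_2+x_3 q|\le A_2\}\big)=2A_2.
\]
Both follow by pushing Lebesgue measure on $[-\tfrac12,\tfrac12]^k$ forward along the relevant linear form and invoking the elementary identity $\int_J|\bZ\cap[y-A,y+A]|\,dy=2A|J|$, valid for any interval $J$ of integer length; this integer-length property holds here precisely because $p_1,q\in\bZ$. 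Since the second identity is independent of $p_1$, for each $q\ne 0$ we obtain $\sum_{p_1,p_2}\Vol_{\ul{x}}=2A_1\cdot 2A_2=4A_1 A_2$; summing over $|q|\in[1,A_3]\cap\bZ$ yields a $q\ne 0$ contribution $\ll A_1 A_2 A_3=\eta^3\beta$. The $q=0$ case is handled similarly and contributes at most $4A_1 A_2$, nonzero only when $A_1\ge 1$.

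To conclude, we must dominate the extra term $A_1 A_2=\eta^2\beta^{1/2}e^{-m/2-s}$ by $\eta^3\beta+W_\varrho$, where $W_\varrho:=\eta^{3-2\varrho}\beta^{1-\varrho}e^{-\varrho m-2(1-\varrho)s}$. A direct computation gives the clean identity $W_\varrho=A_1^{1-2\varrho}\cdot A_1 A_2$, so when $A_1\ge 1$ and $\varrho\in[0,1/2]$ one has $A_1 A_2\le W_\varrho$, while when $A_1<1$ the extra term vanishes. Combining these cases proves the lemma. The main technical hurdle is the exactness of the second identity above: replacing it by a crude Tonelli upper bound would introduce a spurious $+1$ and produce an uncontrollable term of order $A_2 A_3=\eta^2\beta^{1/2}e^{-m/2+s}$, which cannot be absorbed into $W_\varrho$ uniformly in $s>0$.
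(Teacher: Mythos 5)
Your proof is correct, and it takes a technically different route from the paper's while sharing the same fundamental decomposition. Both arguments split the lattice count by the last coordinate $q=v_3$ (the $q\neq 0$ part yields $\eta^3\beta$, the $q=0$ part yields the $\varrho$-dependent term), but the means of estimation differ. The paper bounds $\Vol(A_{\ul v})$ for each individual $\ul v$ by crude Fubini (e.g.\ $\ll\max(|v_1|,|v_3|)^{-1}\eta e^{-m}$), then counts the admissible vectors $\ul v$ in three subcases ($|v_3|\ge A_1$, $1\le|v_3|<A_1$, $v_3=0$) and sums; the $\varrho$-interpolation enters through the estimate $\sum_{1\le|v_1|\le A_1}|v_1|^{1-2\varrho}\ll A_1^{2-2\varrho}$. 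You instead observe that the partial sums $\sum_{p_1}\Vol_{x_1}$ and $\sum_{p_2}\Vol_{(x_2,x_3)}$ are computed exactly by the Blichfeldt-type identity $\int_J|\bZ\cap[y-A,y+A]|\,dy=2A|J|$ for integer-length $J$, giving the clean identity $\sum_{p_1,p_2}\Vol=4A_1A_2$ per $q$; this collapses the paper's first two subcases into one step, and the interpolation reduces to the algebraic identity $W_\varrho=A_1^{1-2\varrho}A_1A_2$ combined with $A_1\ge 1$. Your version is arguably more transparent in that it makes explicit \emph{why} the second term is harmless precisely when $q=0$ yields a nonempty contribution, and it avoids the case distinction on $|v_3|\gtrless A_1$; the paper's version is more elementary in that it requires only crude one-vector volume bounds. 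One small remark: your closing aside identifies the spurious error as "of order $A_2 A_3$," but a crude bound $\le 2A_2+1$ on the $p_2$-count would actually lose the full factor $A_2$ and give a term of order $A_1 A_3=\eta^2\beta e^{m}$, which is even worse (it diverges with $m$); the underlying point that the exact identity is essential stands, but the magnitude of the damage is understated.
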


\begin{proof}
Since for $\ul v=(v_1,v_2,v_3)\in\bR^3$, 
$$
{\sf b}'(\beta)a({n}){\sf a}_s {\sf n}_{ \underline{x}}v=\big(\beta^{-1/2}e^{-m/2}e^s(v_1+x_1v_3),e^{m}(x_2v_1+v_2+x_3v_3) ,\beta^{-1/2}e^{-m/2}e^{-s}v_3\big),
$$
the set $B(n,s,\beta,\eta)$ consists of $\ul x\in [-1/2,1/2]^3$ for which
there exists $\ul v\in \bZ^3\backslash \{0\}$ satisfying
\begin{align}\label{eq:condd1}
|v_{1}+x_{1}v_{3}|\leq \eta \beta^{1/2}e^{m/2-s},\;\;
|v_{2}+x_{2}v_{1}+x_{3}v_{3}|\leq \eta e^{-m},\;\;
|v_{3}|\leq \eta \beta^{1/2}e^{m/2+s}.
\end{align}
If \eqref{eq:condd1} has a solution $\ul x\in [-1/2,1/2]^3$, then
\begin{equation}\label{eq:conddd1}
|v_{1}|\leq |v_{3}|/2+\eta \beta^{1/2}e^{m/2-s},\;\;
|v_{2}|\leq|v_{1}|/2+|v_{3}|/2+1,\;\;
|v_3|\leq \eta \beta^{1/2}e^{m/2+s}.
\end{equation}

Let us first consider the case when $|v_3|\ge \eta \beta^{1/2} e^{m/2-s}$.
Then also $|v_1|\le 3|v_3|/2$.
The volume of the set of $\ul x\in [-1/2,1/2]^3$ satisfying \eqref{eq:condd1} is estimated as
\begin{equation*}
\ll |v_{3}|^{-1}\eta \beta^{1/2}e^{m/2-s}\cdot \max(|v_1|,|v_3|)^{-1}\eta e^{-m}\le  |v_{3}|^{-2} \eta^2 \beta^{1/2}e^{-m/2-s}.
\end{equation*}
Summing these volume bounds over $\ul v$ satisfying \eqref{eq:conddd1} and $|v_1|\le 3|v_3|/2$,
we obtain
$$
\ll \sum_{\substack{|v_3|\leq \eta \beta^{1/2}e^{m/2+s} \\[0.1mm] |v_1|\le 3|v_3|/2,\,|v_{2}|\leq 2|v_{3}|+1}}
|v_{3}|^{-2} \eta^2 \beta^{1/2}e^{-m/2-s}\ll \eta^3 \beta.
$$

Next, we consider the case when $1\le |v_3|< \eta \beta^{1/2} e^{m/2-s}$.
The volume of the set of $\ul x\in [-1/2,1/2]^3$ satisfying \eqref{eq:condd1} is estimated as
\begin{equation*}
\ll \max(|v_1|,|v_3|)^{-1}\eta e^{-m}.
\end{equation*}
Summing these volume bounds over $\ul v$ satisfying \eqref{eq:conddd1},
we obtain
$$
\ll \sum_{\substack{|v_1|< 2\eta \beta^{1/2} e^{m/2-s},\, |v_3|\leq \eta \beta^{1/2}e^{m/2+s} \\[0.1mm] |v_{2}|\leq \max(|v_1|,|v_3|)+1}}
\max(|v_1|,|v_3|)^{-1}\eta e^{-m} \ll \eta^3 \beta.
$$

Finally, we are left to consider the cases when $v_{3}=0$. Let us assume first that $v_1\neq 0$. 
 The volume of the set of $\ul x\in [-1/2,1/2]^3$ satisfying \eqref{eq:condd1} is estimated as
$O\left( |v_1|^{-1} \eta e^{-m}\right)$.
Let $\varrho\in [0,1/2]$. Summing these bounds, we obtain 
\begin{align*}
\ll \sum_{\substack{1\le |v_{1}|\leq \eta \beta^{1/2}e^{m/2-s} \\[0.1mm] |v_{2}|\leq |v_1|/2+1}}
|v_1|^{-1} \eta e^{-m}
&\ll \eta e^{-m}  \sum_{1\le |v_{1}|\leq \eta \beta^{1/2}e^{m/2-s}}
|v_1|^{1-2\varrho}\\
&\ll \eta^{3-2\varrho} \beta^{1-\varrho}e^{-\rho m-2(1-\varrho) s}
\end{align*}
uniformly on $\varrho$.

Since $m>\ln \eta$, when $v_1=v_3=0$, we also have $v_2=0$, so that all the cases have been covered.
\end{proof}

\begin{lem}
\label{lem:bnsl_2}
Let $s\in\bR$, $\beta,\eta>0$, and $n=(-m/2,m)$ with $m\in\bN_o$
satisfying 
$$
\eta<\beta^{-1/2}e^{m/2+s}.
$$
Then 
$$
\Vol\big(B^*(n,s,\beta,\eta)\big)\ll 
\eta^3 \beta^2 + \eta^4 \beta^{5/2} e^{- m/2+s}+
\eta^3 \beta^{3/2} e^{-3m/2+|s|}.
$$
\end{lem}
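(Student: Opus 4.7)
The plan is to make the condition $s^{*}_{1}({\sf b}'(\beta)a(n){\sf a}_{s}{\sf n}_{\underline{x}}\Gamma)\le\eta$ explicit via the $\wedge^{2}$-representation. In dimension three every non-zero element of $\wedge^{2}\bZ^{3}$ is decomposable as $\lambda_{1}\wedge\lambda_{2}$ with $\lambda_{1},\lambda_{2}\in\bZ^{3}$ linearly independent (up to a scalar), so this condition is equivalent to the existence of $\underline{u}=(u_{12},u_{13},u_{23})\in\bZ^{3}\setminus\{0\}$ with $\|\wedge^{2}({\sf b}'(\beta)a(n){\sf a}_{s}{\sf n}_{\underline{x}})\underline{u}\|_{\infty}\le\eta$. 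Computing $\wedge^{2}{\sf n}_{\underline{x}}$ as an explicit upper-triangular unipotent in the basis $e_{1}\wedge e_{2}$, $e_{1}\wedge e_{3}$, $e_{2}\wedge e_{3}$ and composing with the corresponding diagonal factors $\beta^{-1/2}e^{m/2+s}$, $\beta^{-1}e^{-m}$, $\beta^{-1/2}e^{m/2-s}$ rewrites the condition as the system
\begin{align*}
|u_{12}+(x_{3}-x_{1}x_{2})u_{13}-x_{1}u_{23}|&\le A:=\eta\beta^{1/2}e^{-m/2-s},\\
|u_{13}|&\le B:=\eta\beta e^{m},\\
|x_{2}u_{13}+u_{23}|&\le C:=\eta\beta^{1/2}e^{-m/2+s},
\end{align*}
where the standing hypothesis $\eta<\beta^{-1/2}e^{m/2+s}$ is exactly $A<1$.

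The plan is then a union bound on $\underline{u}$, split according to the value of $u_{13}$. In the case $u_{13}\neq 0$ (so $1\le|u_{13}|\le B$ and in particular $B\ge 1$), I fix $u_{13}$ and argue as follows: the third inequality constrains $x_{2}$ to a set of measure $\ll\min(1,C/|u_{13}|)$ and is consistent with $x_{2}\in[-\tfrac12,\tfrac12]$ only for $\ll|u_{13}|+C$ values of $u_{23}$. For fixed $x_{1},x_{2}$ and $\underline{u}$, the first inequality over all $u_{12}\in\bZ$ amounts to requiring $x_{3}u_{13}-x_{1}(x_{2}u_{13}+u_{23})$ to lie within $A$ of some integer; since this is linear in $x_{3}$ with slope $u_{13}$, the range of this function over $[-\tfrac12,\tfrac12]$ has length $|u_{13}|$ and meets $\ll|u_{13}|+1$ integers, each producing an $x_{3}$-interval of length $\ll A/|u_{13}|$, so the admissible $x_{3}$-measure is $\ll A$. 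Integrating out $x_{1}$ and summing over $u_{23}$ gives $\ll AC$ per $u_{13}$, and summing over $1\le|u_{13}|\le B$ yields the first term $\ll ABC=\eta^{3}\beta^{2}$.

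In the complementary case $u_{13}=0$ the third inequality forces $|u_{23}|\le C$, and $u_{23}\neq 0$ requires $C\ge 1$. The same distance-to-nearest-integer estimate applied to $|u_{12}-x_{1}u_{23}|\le A$ produces $x_{1}$-measure $\ll A$, hence total $\ll AC$; because $C\ge 1$ and $e^{s}\le e^{|s|}$, this is absorbed into $AC^{2}\le\eta^{3}\beta^{3/2}e^{-3m/2+|s|}$, one of the claimed terms (the intermediate $\eta^{4}\beta^{5/2}e^{-m/2+s}$ in the statement is a looser companion that follows trivially once $B\ge 1$, $C\ge 1$). The remaining possibility $u_{13}=u_{23}=0$ is ruled out by $A<1$, which forces $|u_{12}|\le A<1$ and hence $\underline{u}=0$. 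The main obstacle is the wedge-square bookkeeping—writing $\wedge^{2}{\sf n}_{\underline{x}}$ explicitly and keeping the diagonal factors straight—together with the delicate handling of small $|u_{13}|$, where the $|u_{13}|+1$ integer count has to cancel against the slope $|u_{13}|$ to produce the clean $x_{3}$-measure bound $\ll A$; the asymmetric shapes $AC^{2}$ versus $A^{2}C$ of the third claimed term reflect the dichotomy $s\ge 0$ versus $s<0$, both dominated by $\eta^{3}\beta^{3/2}e^{-3m/2+|s|}$.
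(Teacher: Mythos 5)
Your proof is correct and in fact a little sharper than the claimed bound: the middle term $\eta^4\beta^{5/2}e^{-m/2+s}$ does not arise in your estimate, as you observe. The overall structure is the same as the paper's: unwind $s_1^*\le\eta$ into the three linear constraints on $\ul u$, use $A<1$ to rule out $u_{13}=u_{23}=0$, and split on whether $u_{13}$ vanishes. The mechanics differ in the $u_{13}\neq0$ case. The paper first removes the cross-term by the measure-preserving substitution $(x_1,x_2,x_3)\mapsto(x_1,x_2,x_3-x_1x_2)$ (enlarging the domain to $[-1,1]^3$), bounds the per-$\ul u$ volume coordinate by coordinate, and then sums over the admissible range of $\ul u$; the last step there uses the crude $\sum_{1\le|u_{13}|\le B}|u_{13}|^{-1}\ll B$, which is exactly what produces the extra $\eta^4\beta^{5/2}e^{-m/2+s}$. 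You instead combine the $u_{12}$-sum with the $x_3$-integral via a distance-to-nearest-integer count in which the slope $|u_{13}|$ cancels, and likewise for the pair $(u_{23},x_2)$, arriving at a per-$u_{13}$ bound $\ll AC$ uniform in $u_{13}$ and hence the cleaner total $\ll ABC=\eta^3\beta^2$. One minor remark: the decomposability of $\wedge^2\bZ^3$ that you invoke for the ``equivalence'' is not actually needed; the union bound requires only the forward implication $s_1^*\le\eta\Rightarrow\exists\,\ul u$, which is also all that the paper uses.
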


\begin{proof}
The set $B^*(n,s,\beta,\eta)$ consists of $\ul x\in [-1/2,1/2]^3$
such that there exist linearly independent $v,v'\in\bZ^3$
satisfying 
\begin{equation}\label{eq:small}
\left \| {\sf b}'(\beta)a({n}){\sf a}_{s}{\sf n}_{ \underline{x}}(v\wedge v')\right\|_\infty\le \eta. 
\end{equation}
Since
\begin{align*}
{\sf n}_{\ul x}(v\wedge v') =&
\Big((v_1v_2'-v_2v_1')+(x_3-x_1x_2)(v_1v_3'-v_3v_1')+x_1(v_3v_2'-v_2v_3')\Big) \ul e_1\wedge \ul e_2\\
&+\Big( (v_2v_3'-v_3v_2')+x_2(v_1v_3'-v_3v_1') \Big) \ul e_2\wedge \ul e_3
+\Big( v_1v_3'-v_3v_1' \Big) \ul e_1\wedge \ul e_3
\end{align*}
and
\begin{align*}
{\sf b}'a({n}){\sf a}_{s}(\ul e_1\wedge \ul e_2)&=\beta^{-1/2}e^{m/2+s}\,\ul e_1\wedge \ul e_2,\\
{\sf b}'a({n}){\sf a}_{s}(\ul e_2\wedge \ul e_3)&=\beta^{-1/2}e^{m/2-s}\, \ul e_2\wedge \ul e_3,\\
{\sf b}'a({n}){\sf a}_{s}(\ul e_1\wedge \ul e_3)&=\beta^{-1}e^{-m}\, \ul e_1\wedge \ul e_3.
\end{align*}
The condition \eqref{eq:small} implies existence of $\ul u\in \bZ^3\backslash \{0\}$ such that 
\begin{equation}\label{eq:small2}
|u_1+(x_3-x_1x_2)u_2+x_1u_3|\le \eta \beta^{1/2}e^{-m/2-s},\;
|u_3+x_2u_2|\le \eta \beta^{1/2}e^{-m/2+s},\; 
|u_2|\le \eta \beta e^{m}.
\end{equation}
Let us denote by $E_{\ul u}$ the set of $\ul x\in [-1/2,1/2]^3$ satisfying 
\eqref{eq:small2} and by 
$F_{\ul u}$
the set of $\ul y\in [-1,1]^3$ satisfying
\begin{equation}\label{eq:small3}
|u_1+y_3u_2+y_1u_3|\le \eta \beta^{1/2}e^{-m/2-s},\;
|u_3+y_2u_2|\le \eta \beta^{1/2}e^{-m/2+s},\; 
|u_2|\le \eta \beta e^{m}.
\end{equation}
It follows from a change of variables that 
$$
\Vol(E_{\ul u})\le \Vol(F_{\ul u}).
$$
Now we estimate $\sum_{\ul u\in \bZ^3\backslash \{0\}} \Vol(F_{\ul u})$. We note that if $F_{\ul u}\ne 0$, then
$u_1,u_2,u_3$ satisfy the conditions
\begin{equation}\label{eq:small_cond}
|u_1|\le |u_2|+|u_3|+\eta \beta^{1/2}e^{-m/2-s},\;\;
|u_3|\le |u_2|+\eta \beta^{1/2}e^{-m/2+s},\; \;
|u_2|\le \eta \beta e^{m},
\end{equation}
which we assume from now on. Further, we note that when $u_2=u_3=0$, it follows from the assumption on $\eta$
that also $u_1=0$, which is not possible,
so that $(u_2,u_3)\ne 0$.
We analyze several cases separately.

When $u_2=0$ and $u_3\ne 0$, we have 
$$
|u_3|\le \eta \beta^{1/2}e^{-m/2+s},\quad
|u_1|\le \eta \beta^{1/2}e^{-m/2}(e^s+e^{-s}), 
$$
and
\begin{align*}
\Vol(F_{\ul u})
&\ll  |u_3|^{-1} \eta \beta^{1/2}e^{-m/2-s}.
\end{align*}
Summing these volume estimates, we obtain
\begin{equation}\label{eq:ee1}
\ll \eta \beta^{1/2}e^{-m/2+s}\cdot \eta \beta^{1/2}e^{-m/2}(e^s+e^{-s})\cdot 
\eta \beta^{1/2}e^{-m/2-s}\ll \eta^3 \beta^{3/2} e^{-3m/2+|s|}. 
\end{equation}
Now it remains to consider the cases when $u_2\ne 0$.
In this situation, we obtain
\begin{align*}
\Vol(F_{\ul u})
&\ll \max(|u_2|, |u_3|)^{-1} \eta \beta^{1/2}e^{-m/2-s}\cdot |u_2|^{-1} \eta \beta^{1/2}e^{-m/2+s}\\
&= 
\max(|u_2|, |u_3|)^{-1} |u_2|^{-1} \eta^2 \beta e^{-m}.
\end{align*}
Our assumption also implies that $|u_1|\le 3\max(|u_2|, |u_3|)$, so that
summing the above volume bounds, we obtain
\begin{align}
&\ll \sum_{1\le  |u_2|\le \eta \beta e^{m}} (|u_2|+\eta \beta^{1/2}e^{-m/2+s}) |u_2|^{-1} \eta^2 \beta e^{-m}\nonumber \\
&\ll \eta^3 \beta^2 + \eta^3 \beta^{3/2} e^{-3 m/2+s} \sum_{1\le  |u_2|\le \eta \beta e^{m}} |u_2|^{-1}
\ll \eta^3 \beta^2 + \eta^4 \beta^{5/2} e^{- m/2+s}.\label{eq:ee2}
\end{align}
Combining \eqref{eq:ee1} and \eqref{eq:ee2}, we obtain the lemma.
\end{proof}

\begin{prop}
\label{prop:alphaintoverX}
Let $f:\mb{R}^{3}\to [0,1]$ be a function supported on $[-c,c]^{2}\times[-\beta,\beta]$ for some $c>0$ and $0<\beta\ll 1$. 
Then for $p<3$,
$$
\int_{X}\widehat{f}^{p}\,d\mu\ll_{c,p} \beta^{p/3}.
$$
\end{prop}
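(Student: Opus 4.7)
The plan is to use the $\SL_{3}(\bR)$-invariance of $\mu$ to rescale the support of $f$ to a small cube, and then combine a Schmidt-type pointwise estimate with the tail bound in Lemma~\ref{l:integrable}. Set $R:=(c^{2}\beta)^{1/3}$ and
$$
g_{0}:=\operatorname{diag}(c/R,\,c/R,\,\beta/R)\in\SL_{3}(\bR),
$$
so that $g_{0}\cdot[-R,R]^{3}=[-c,c]^{2}\times[-\beta,\beta]$. Then $0\le f\circ g_{0}\le\chi_{[-R,R]^{3}}$, and invariance of $\mu$ together with $\widehat f(g_{0}\Lambda)=\widehat{f\circ g_{0}}(\Lambda)$ gives
$$
\int_{X}\widehat f^{\,p}\,d\mu=\int_{X}\widehat{f\circ g_{0}}^{\,p}\,d\mu\le\int_{X}\widehat\chi_{[-R,R]^{3}}^{\,p}\,d\mu.
$$
Since $\beta\ll 1$ we may assume $R\le 1$; the complementary regime, where $R$ and $\beta^{p/3}$ are both $\asymp_{c,p}1$, follows directly from Lemma~\ref{l:integrable}.

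\textbf{Pointwise bound.} Next I would establish
$$
\widehat\chi_{[-R,R]^{3}}(\Lambda)\ll R\cdot\hgt(\Lambda)\cdot\chi_{\{s_{1}(\Lambda)\le R\sqrt{3}\}}(\Lambda),\qquad\Lambda\in X.
$$
The indicator is immediate: no nonzero lattice vector lies in $[-R,R]^{3}$ when $s_{1}(\Lambda)>R\sqrt{3}$. For the main factor, apply Schmidt's quantitative estimate
$$
|\Lambda\cap[-R,R]^{3}\setminus\{0\}|\ll\frac{R}{\lambda_{1}}+\frac{R^{2}}{\lambda_{1}\lambda_{2}}+R^{3},
$$
where $\lambda_{1}\le\lambda_{2}\le\lambda_{3}$ are the successive minima of $\Lambda$, and combine it with the identity $\hgt(\Lambda)=\max(\lambda_{1}^{-1},(\lambda_{1}\lambda_{2})^{-1})$ for unimodular $\Lambda$ (since $s_{1}^{*}\asymp\lambda_{1}\lambda_{2}$ and $d=1$). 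Each of the three terms is then bounded by $R\cdot\hgt(\Lambda)$ using $R\le 1$ and $\hgt\ge 1$.

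\textbf{Integration.} On $\{s_{1}\le R\sqrt{3}\}$ one has $\hgt\ge(R\sqrt{3})^{-1}$, so
$$
\int_{X}\widehat\chi_{[-R,R]^{3}}^{\,p}\,d\mu\ll R^{p}\int_{\{\hgt\ge(R\sqrt{3})^{-1}\}}\hgt^{p}\,d\mu.
$$
A standard layer-cake computation, using $\mu(\{\hgt\ge L\})\ll_{\delta}L^{-3+\delta}$ from Lemma~\ref{l:integrable} and splitting the integral $\int_{0}^{\infty}pt^{p-1}\mu(\{\hgt\ge\max(t,L_{0})\})\,dt$ at $t=L_{0}$, yields for every $\delta>0$ with $p<3-\delta$ the bound
$$
\int_{\{\hgt\ge L_{0}\}}\hgt^{p}\,d\mu\ll_{p,\delta}L_{0}^{p-3+\delta}.
$$
Substituting $L_{0}\asymp R^{-1}$ gives $\int_{X}\widehat f^{\,p}\,d\mu\ll_{c,p,\delta}R^{3-\delta}$; choosing $\delta=3-p\in(0,3)$ produces $R^{3-\delta}=R^{p}=(c^{2}\beta)^{p/3}\ll_{c}\beta^{p/3}$, completing the proof.

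\textbf{Main obstacle.} The delicate step is the pointwise bound, since Lemma~\ref{l:siegel} as stated only gives $\widehat\chi_{[-R,R]^{3}}\ll\hgt$ with an implicit support-dependent constant, whereas here we must extract the extra factor of $R$ that ultimately produces the sharp exponent $p/3$ in $\beta$. This requires the quantitative Schmidt inequality above together with a short case analysis relating its three terms to $\hgt$ via the successive minima; once this pointwise bound is in hand, the remainder of the argument is a routine layer-cake computation based on Lemma~\ref{l:integrable}.
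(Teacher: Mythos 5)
Your proof is correct in substance but takes a different technical route from the paper, and one step has a minor boundary issue. Both arguments share the high-level shape ``rescale via an $\SL_3$ element, extract a factor $\asymp\beta^{1/3}$ in front of $\hgt$, and integrate'', but they differ in how the factor is extracted. The paper first bounds $\widehat f\ll_c \hgt\circ\mathsf{b}(\beta)$ with $\mathsf{b}(\beta)=\operatorname{diag}(1,1,\beta^{-1})$, and then replaces $\mathsf{b}(\beta)$ by the \emph{scalar} matrix $\mathsf{b}''(\beta)=\beta^{-1/3}\mathrm{id}$ using invariance of $\mu$; since $\mathsf{b}''$ dilates $s_1$, $s_1^*$, $d$ by $\beta^{-1/3}$, $\beta^{-2/3}$, $\beta^{-1}$ respectively, one reads off $\hgt\circ\mathsf{b}''(\beta)\le\beta^{1/3}\hgt$, and the conclusion follows directly from $\hgt\in L^p(\mu)$. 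You instead rescale the support to the cube $[-R,R]^3$ with $R=(c^2\beta)^{1/3}$ and invoke a \emph{quantitative} Schmidt counting bound $|\Lambda\cap[-R,R]^3\setminus\{0\}|\ll R/\lambda_1+R^2/(\lambda_1\lambda_2)+R^3$ to obtain the pointwise inequality $\widehat\chi_{[-R,R]^3}\ll R\,\hgt$. That bound is genuinely stronger than what Lemma~\ref{l:siegel} states (which hides all support-dependence in the implicit constant), so you are importing an ingredient not available in the paper as written; it is, however, standard and your case analysis relating the three terms to $\hgt$ via $R\le1$ and $\hgt\ge1$ is correct.

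Two concrete remarks. First, once you have $\widehat\chi_{[-R,R]^3}\ll R\,\hgt$ pointwise, the layer-cake detour and the indicator $\chi_{\{s_1\le R\sqrt3\}}$ are unnecessary: simply integrate to get $\int_X\widehat\chi_{[-R,R]^3}^p\,d\mu\ll R^p\int_X\hgt^p\,d\mu\ll_p R^p\ll_c\beta^{p/3}$, using the $L^p$-integrability of $\hgt$ (the first assertion of Lemma~\ref{l:integrable}). This matches the paper's final step and avoids any loss. Second, as written your layer-cake computation is only valid for $\delta$ with $p<3-\delta$, so substituting $\delta=3-p$ is at the boundary of validity and degenerate; the intended fix is to take any $\delta\in(0,3-p)$, get $R^{3-\delta}$, and then observe that $R\le1$ (from $\beta\ll1$ and $c^2\beta\le1$) gives $R^{3-\delta}\le R^p$. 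Either of these repairs makes the argument complete.

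Finally, a small imprecision: you write $\hgt(\Lambda)=\max(\lambda_1^{-1},(\lambda_1\lambda_2)^{-1})$ as an identity, but the paper's $\hgt$ is defined via $s_1$, $s_1^*$, $d$ in the max-norm, so this should be an $\asymp$-equivalence (with the $d^{-1}=1$ term dominated because $\lambda_1\ll1$ for unimodular $\Lambda$). This does not affect the argument.
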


\begin{proof}
As in the previous proof,
$$
\int_{X} \widehat f^{p}\, d\mu\le \int_{X} \widehat \chi^{p}\circ {\sf b}(\beta) \, d\mu\ll_c\int_{X} \hbox{ht}^{p}\circ {\sf b}(\beta) \, d\mu.
$$
where
$\chi$ is the characteristic function of $[-c,c]^2\times [-1,1]$ and ${\sf b}(\beta):=\hbox{diag}(1,1,\beta^{-1})$.
By invariance of $\mu$,
$$
\int_X\hbox{ht}^{p}\circ {\sf b}(\beta) \, d\mu=\int_X\hbox{ht}^{p}\circ {\sf b}''(\beta) \, d\mu,
$$
where ${\sf b}''(\beta):=\hbox{diag}(\beta^{-1/3},\beta^{-1/3},\beta^{-1/3})$.
We have 
$$
s_1\circ {\sf b}''(\beta)= \beta^{-1/3}s_1,\quad
s^*_1\circ {\sf b}''(\beta)= \beta^{-2/3}s^*_1,\quad
d\circ {\sf b}''(\beta)= \beta^{-1}d,
$$
so that 
$\hbox{ht}\circ {\sf b}''(\beta)\ll \beta^{1/3}\,\hbox{ht}$.
Since $\hbox{ht}$ is $L^p$-integrable on $X$, this implies the proposition.
\end{proof}

\section{Estimating the variance: first step}
\label{sec:var1}

In this section, we proceed with analyzing the variance of averages
of functions over the sets $\cF_\Omega$, defined in \eqref{Def_FT}.
This general result will be subsequently used in Sections~\ref{sec:appccs}--\ref{sec:CompVar}. Recall that $b \in (0,1)$ denotes a parameter in the definition 
of the set $\Omega$.

\begin{prop}\label{l:var_general}
Let $\phi_n\in C_c^\infty(X)$, $n\in\cF_\Omega$, be such that uniformly on $n$, 
\begin{enumerate}
\item[(i)] $\int_Y \phi_n\circ a(n)\, d\nu=0$,
\item[(ii)]  $\int_Y \phi_n^2 \circ a(n)\, d\nu =O_\delta\left((\ln T)^\delta e^{\delta|n|}\right)$ for any $\delta>0$,
\item[(iii)] for any $p\in [1,2)$ and $i=1,2$, 
$\int_{Y_i} |\phi_n|^p \circ a(n)\, d\nu_i =O_p\left(b^{p/2}\right)$,
\item[(iv)] $\|\phi_n\|_{C^\ell}=O\big((\ln T)^{\sigma}\big)$ for some fixed $\sigma>0$.
\end{enumerate}
Then for any $B\ge B(\sigma)>0$, $B_1\ge B_1(B,\sigma)>0$ and
$B_0\ge B_0(B,B_1,\sigma)>0$,
\begin{align*}
\int_Y\left(\sum_{n\in\cF_\Omega} \phi_n\circ a(n)\right)^2\,d\nu=\sum_{(n,k)\in \cF_\Omega^\triangledown} &\int_X 
\phi_n\circ a(n)\cdot \phi_k\circ a(k)\, d\mu \\
&+
O_{c,\delta}\Big((1+(\ln T+\ln(b/a)) b^{1-\delta}) (\ln T)^{\delta}\Big)
\end{align*}
for all $\delta>0$,
where $\cF_\Omega^\triangledown=\cF_\Omega^\triangledown(B,B_0,B_1)$ denotes the subset of $(n,k)\in \cF_\Omega\times \cF_\Omega$
satisfying 
\begin{equation}\label{eq:ftri}
|n-k|< B\ln\ln T,\quad
|n|\geq B_0\ln\ln T,\quad \lfloor n \rfloor\ge  {B_1}\ln\ln T.
\end{equation}
\end{prop}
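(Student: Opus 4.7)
Expanding the square gives
$$
\int_Y \Big(\sum_{n\in\cF_\Omega} \phi_n\circ a(n)\Big)^2 d\nu = \sum_{(n,k)\in\cF_\Omega\times\cF_\Omega} \int_Y \phi_n\circ a(n)\cdot \phi_k\circ a(k)\, d\nu,
$$
which I split into pairs in $\cF_\Omega^\triangledown$ and its complement. On $\cF_\Omega^\triangledown$ I replace the integral over $(Y,\nu)$ by the one over $(X,\mu)$, at negligible cost; the complement is handled case-by-case. The parameters are chosen successively: $B$ (so that far-apart pairs contribute little), then $B_1$ (so that the main replacement is accurate), then $B_0$ (so that the equidistribution to the subspaces $Y_1,Y_2$ is quantitative enough).

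\textbf{Main pairs.} For $(n,k)\in\cF_\Omega^\triangledown$ the key trick is to absorb both factors into a single composition. Using commutativity of $\{a(t)\}$,
$$
\phi_n\circ a(n)\cdot\phi_k\circ a(k) = \psi_{n,k}\circ a(n),\qquad \psi_{n,k}:=\phi_n\cdot(\phi_k\circ a(k-n)),
$$
and Theorem~\ref{prop:ED} applied with $r=1$ and $I=\{1,2\}$ (so $Y_I=X$) gives
$$
\int_Y \psi_{n,k}\circ a(n)\,d\nu - \int_X \psi_{n,k}\,d\mu \ll e^{-\gamma_1\lfloor n\rfloor}\|\psi_{n,k}\|_{C^\ell};
$$
the $\mu$-invariance of $a(n)$ identifies $\int_X\psi_{n,k}d\mu$ with $\int_X \phi_n\circ a(n)\cdot\phi_k\circ a(k)d\mu$. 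Since $|k-n|<B\ln\ln T$, the standard conjugation estimate $\|f\circ a(s)\|_{C^\ell}\ll e^{c\ell|s|}\|f\|_{C^\ell}$ and (iv) yield $\|\psi_{n,k}\|_{C^\ell}\ll(\ln T)^{2\sigma+c\ell B}$. Combined with $\lfloor n\rfloor\geq B_1\ln\ln T$, the per-pair replacement error is $O((\ln T)^{-\gamma_1 B_1+2\sigma+c\ell B})$; with $|\cF_\Omega^\triangledown|\ll(\ln T)^2(\ln\ln T)^2$, choosing $B_1$ so that $\gamma_1 B_1>2+2\sigma+c\ell B+1$ makes the total replacement error $O(1)$.

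\textbf{Complementary pairs.} I distinguish three subcases.

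\emph{(A) $|n-k|\geq B\ln\ln T$.} Theorem~\ref{prop:integrability} with $r=2$ combined with (i), which kills the product term, gives $|\int_Y \phi_n\circ a(n)\phi_k\circ a(k)\,d\nu|\ll e^{-\eta_2|n-k|}(\ln T)^{2\sigma}$. Summing yields $O((\ln T)^{2+2\sigma-\eta_2 B}\ln\ln T)=O(1)$ once $\eta_2 B$ is large.

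\emph{(B1) $|n-k|<B\ln\ln T$ and $|n|<B_0\ln\ln T$.} Then $|k|$ is also $O(\ln\ln T)$, so there are only $O((\ln\ln T)^4)$ such pairs; Cauchy--Schwarz on $(Y,\nu)$ with (ii) gives each a bound $O_\delta((\ln T)^{\delta(1+B_0+B)})$, and shrinking $\delta$ makes the total $O((\ln T)^\delta)$.

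\emph{(B2) $|n-k|<B\ln\ln T$, $|n|\geq B_0\ln\ln T$, $\lfloor n\rfloor<B_1\ln\ln T$.} WLOG $n_1=\lfloor n\rfloor$ (the symmetric case uses $Y_1$); set $\tilde\phi_n:=\phi_n\circ a(n_1,0)$, $\tilde\phi_k:=\phi_k\circ a(k_1,0)$, so
$$
\phi_n\circ a(n)\cdot\phi_k\circ a(k) = \big(\tilde\phi_n\cdot(\tilde\phi_k\circ a(0,k_2-n_2))\big)\circ a(0,n_2).
$$
Theorem~\ref{prop:ED} with $r=1$, $I=\{2\}$, $t=(0,n_2)$ reduces the integral to $\int_{Y_2}\tilde\phi_n\cdot(\tilde\phi_k\circ a(0,k_2-n_2))\,d\nu_2$ modulo an error $O((\ln T)^{-\gamma_1 B_0+c\ell(2B_1+3B)+2\sigma})$ which is small for $B_0$ large. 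Because $a(0,\cdot)\in G_2$ and $\nu_2$ is $G_2$-invariant, H\"older with exponents $p<2<p'=p/(p-1)$ close to~$2$, interpolation of $L^{p'}(\nu_2)$ between (iii) and the sup-norm bound from (iv), together with the identity $\|\tilde\phi_k\circ a(0,\cdot)\|_{L^{p'}(\nu_2)}=\|\phi_k\circ a(k)\|_{L^{p'}(\nu_2)}$, bound the main term by $O((\ln T)^\delta b^{1-\delta})$. There are $O((\ln\ln T)^3(\ln T+\ln(b/a)))$ such pairs, giving the total contribution $O((\ln T+\ln(b/a))b^{1-\delta}(\ln T)^\delta)$.

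\textbf{Main obstacle.} The delicate step is subcase (B2): the orbit $a(n)_*\nu$ does not equidistribute on $X$ when one coordinate stays bounded, so one must reduce to the intermediate homogeneous subspace $Y_2$ (or $Y_1$) via Theorem~\ref{prop:ED}, and since (iii) only provides $L^p$-bounds with $p<2$, the missing $L^2(\nu_2)$-bound has to be replaced by interpolation with (iv), at the acceptable cost $(\ln T)^\delta$ and $b^{1-\delta}$ in place of $b$. All other cases are routine, and with $B\ll B_1\ll B_0$ fixed in that order the four contributions combine to give the stated estimate.
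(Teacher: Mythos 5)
Your proposal is correct and follows essentially the same strategy as the paper's proof. You make the same four-way split: $|n-k|\geq B\ln\ln T$ handled via Theorem~\ref{prop:integrability} and condition (i); then within the near-diagonal regime you subdivide by $|n|<B_0\ln\ln T$ (treated by Cauchy--Schwarz with (ii)), by $\lfloor n\rfloor<B_1\ln\ln T$ (treated by pushing to the intermediate subspace via Theorem~\ref{prop:ED} with $r=1$, using the $a$-invariance of $\nu_i$ and interpolation of $L^2$ between (iii) and (iv)), and the remaining main pairs (pushed to $(X,\mu)$ via Theorem~\ref{prop:ED} with $r=1$). The only cosmetic differences are that you normalize to $n_1=\lfloor n\rfloor$ and land on $Y_2$ where the paper normalizes to $n_2=\lfloor n\rfloor$ and lands on $Y_1$, and that you use a H\"older pair $p<2<p'$ where the paper uses $L^2\times L^2$ followed by interpolation on each factor; both yield the same $(\ln T)^{\delta}b^{1-\delta}$ per-pair bound. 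Your case-(A) exponent is stated slightly loosely ($|\cF_\Omega|^2$ gives $(\ln T)^4$, not $(\ln T)^2\ln\ln T$), but the conclusion $O(1)$ for $B$ large is unaffected.
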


\begin{proof}
Throughout the proof, we use the bound \eqref{eq:cardQRT} which, in particular, implies that 
the number of terms in the sums below is $O_c\left((\ln T)^4\right)$.

For a parameter $B>0$ to be specified later, we write
\begin{equation}\label{eq:summ1}
\left(\sum_{n\in\cF_\Omega} \phi_n\circ a(n)\right)^2=\Phi^++\Phi^-,
\end{equation}
where $\Phi^+$ denotes the sum over the products $\phi_n\circ a(n)\cdot \phi_k\circ a(k)$
with $n,k\in \cF_\Omega$ satisfying 
\begin{equation}\label{eq:phi_plus}
|n-k|< B\ln\ln T,
\end{equation}
and $\Phi^-$ denotes
the remaining sum with 
\begin{equation}\label{eq:minus}
|n-k|\ge B\ln\ln T.
\end{equation}

Since $\int_Y \phi_{n}\circ a(n)\, d\nu=0$, it follows from Theorem \ref{prop:integrability} and (iv) that
when \eqref{eq:minus} holds
\begin{align*}
\left|\int_{Y}\phi_{n}\circ a({n})\cdot \phi_{k}\circ a(k)\,d\nu\right|&\ll \|\phi_{n}\|_{C^\ell}\|\phi_{k}\|_{C^\ell}
(\ln T)^{-\eta_2 B}
\ll  (\ln T)^{2\sigma -\eta_2 B},
\end{align*}
so that
$$
\left| \int_Y \Phi^-\, d\nu \right|\ll |\cF_\Omega|^2 (\ln T)^{2\sigma -\eta_2 B}\ll_c (\ln T)^{4+2\sigma -\eta_2 B}.
$$
This contribution is negligible when the parameter $B$ is chosen sufficiently large depending on $\sigma$.

It remains to deal with $\Phi^+$, which we write as
\begin{equation}\label{eq:summ2}
\Phi^+=\Phi^{(1)}+\Phi^{(2)}+\Phi^{(3)},
\end{equation}
where $\Phi^{(1)}$
is the subsum of $\Phi^+$ given by the condition 
\begin{equation}\label{eq:phi_1}
|n|< B_0\ln\ln T,
\end{equation}
$\Phi^{(2)}$ 
is the subsum of $\Phi^+$ given by the condition 
\begin{equation}\label{eq:phi_2}
|n|\geq B_0\ln\ln T\qand \lfloor n \rfloor< {B_1}\ln\ln T,
\end{equation}
and 
$\Phi^{(3)}$
is the subsum given by the condition 
\begin{equation}\label{eq:phi_3}
|n|\geq B_0\ln\ln T\qand \lfloor n \rfloor\ge  {B_1}\ln\ln T,
\end{equation}
where $B_0$ and $B_1$ are constants yet to be chosen. 

To estimate $\Phi^{(1)}$, we observe that 
$$
\left|\int_{Y}\phi_{n}\circ a(n)\cdot \phi_{k}\circ a(k)\,d\nu\right|\le 
\big\|\phi_{n}\circ a(n)\big\|_{L^2(\nu)}\cdot \big\|\phi_{k}\circ a(k)\big\|_{{L}^2(\nu)}.
$$
Since the parameter domain for $\Phi^{(1)}$ is given by \eqref{eq:phi_plus} and \eqref{eq:phi_1}, we have 
$$
|n|,|k|\leq (B+B_0)\ln\ln T.
$$
Therefore, the number of summands in $\Phi^{(1)}$ is $O_{B,B_0}\left((\ln\ln T)^4\right)$, and
it follows from (ii) that 
$$
\left\|\phi_{n}\circ a(n)\right\|_{{L}^2(\nu)},\;
\left\|\phi_{k}\circ a(k)\right\|_{{L}^2(\nu)}
\ll_{\delta} (\ln T)^{(1+B+B_0)\delta/2}\quad\hbox{for any $\delta>0$},
$$
uniformly on $n,k$. Therefore,
$$
\left| \int_Y \Phi^{(1)}\, d\nu \right|\ll_{B,B_0,\delta'} (\ln T)^{\delta'}\quad\hbox{for any $\delta'>0$}.
$$

Next we analyze the sum $\Phi^{(2)}$. Without loss of generality we only consider the part of the sum with $n_{1}\ge n_{2}$,
so that $\lfloor n \rfloor=n_{2}$.
The other part of the sum can be analyzed similarly.
By Theorem \ref{prop:ED} with $r=1$,
\begin{align}\label{eq:stepp}
\int_{Y}\phi_{n}\circ a(n)\cdot \phi_{k}\circ a(k)\,d\nu
=&
\int_{Y}\Big(\phi_{n}\circ a(0,{n}_{2})\cdot \phi_{k}\circ a\big(k-(n_{1},0)\big)\Big) \circ a(n_{1},0)\,d\nu \\
=&
\int_{Y_1}\phi_{n}\circ a(0,{n}_{2})\cdot \phi_{k}\circ a(k-(n_{1},0))\,d\nu_1 \nonumber \\
&\quad\quad+ O\Big(e^{-\gamma_1 {n_{1}}}\big\|\phi_{n}\circ a(0,{n}_{2})\cdot \phi_{k}\circ a(k-(n_{1},0))\big\|_{C^\ell} \Big). \nonumber
\end{align}
Since the measure $\nu_1$ is $a(*,0)$-invariant,
\begin{equation}\label{eq:int00}
\int_{Y_1}\phi_{n}\circ a(0,{n}_{2})\cdot \phi_{k}\circ a(k-(n_{1},0))\,d\nu_1
=\int_{Y_1}\phi_{n}\circ a({n})\cdot \phi_{k}\circ a(k)\,d\nu_1.
\end{equation}
It follows from conditions \eqref{eq:phi_plus} and \eqref{eq:phi_2} that 
$$
n_{2}\leq B_1\ln\ln T\qand |k-(n_{1},0)|\le (B+B_1)\ln\ln T,
$$
so that using (iv), we deduce that 
\begin{align*}
\big\|\phi_{n}\circ a(0,{n}_{2})\cdot \phi_{k}\circ a(k-(n_{1},0))\big\|_{C^\ell}
&\ll\big\|\phi_{n}\circ a(0,{n}_{2})\big\|_{C^\ell} \cdot \big\|\phi_{k}\circ a(k-(n_{1},0))\big\|_{C^\ell}\\
&\ll (\ln T)^{\sigma'}
\end{align*}
for some $\sigma'>0$ depending on $\sigma,B,B_1$.
On the other hand, also $n_{1}\geq B_0\ln\ln T$.
Therefore, taking $B_0$ sufficiently large depending on
$\sigma'$ and $\gamma_1$, we obtain that the sum of the second terms in \eqref{eq:stepp}
gives a negligible contribution, and it remains to investigate the sum of integrals
\eqref{eq:int00}.
From (iii), for $p<2$,
\begin{align*}
\left|\int_{Y_1}\phi_{n}\circ a({n})\cdot \phi_{k}\circ a(k)\, d\nu_1\right|&\leq \big\|\phi_{n}\circ a({n})\big\|_{{L}^2(\nu_1)}\cdot \big\|\phi_{k}\circ a(k)\big\|_{{L}^2(\nu_1)}\\
&\le  (\ln T)^{\sigma(2-p)} \left(\int_{Y_1} |\phi_{n}|^p\circ a({n})\, d\nu_1\right)^{1/2}
\left(\int_{Y_1} |\phi_{k}|^p\circ a({k})\, d\nu_1\right)^{1/2}\\
&\ll_p (\ln T)^{\sigma(2-p)} b^{p/2}.
\end{align*}
Recalling the definition of $\cF_\Omega$ given in \eqref{Def_FT},
we see that the number of $(n,k)\in\cF_\Omega\times \cF_\Omega$ satisfying 
\eqref{eq:phi_plus} and \eqref{eq:phi_2} is 
$O_{B,B_1}\left(\big(\ln T+\ln(b/a)\big)(\ln\ln T)^3\right)$. 
Therefore, 
$$
\left|\int_Y \Phi^{(2)}\, d\nu\right|\ll_{B,B_1,\delta'} (\ln T+\ln(b/a))(\ln T)^{\delta'} b^{1-\delta'}\quad 
\hbox{for any $\delta'>0$.}
$$

Finally, we proceed with estimating $\Phi^{(3)}$, so that $(n,k)$ in the sum satisfy \eqref{eq:phi_plus} and \eqref{eq:phi_3}.
In this case, by Theorem \ref{prop:ED} with $r=1$,
\begin{align}\label{eq:phi33}
\int_{Y} \phi_{n}\circ a({n}) \cdot \phi_{k}\circ a(k)\,d\nu
=&
\int_{Y}\left(\phi_{n}\cdot \phi_{k}\circ a(k-{n})\right)\circ a({n})\,d\nu\\
=&
\int_{X}\phi_{ n} \cdot \phi_{ k}\circ a(k-{n})\,d\mu \nonumber\\
 &\quad\quad+ O\left( 
e^{-\gamma_1 \lfloor n\rfloor}
\left\|\phi_{ n} \cdot \phi_{ k}\circ a(k-{n})\right\|_{C^\ell}
\right). \nonumber 
\end{align}
By invariance of the measure $\mu$,
$$
\int_{X}\phi_{n} \cdot \phi_{k}\circ a(k-{n})\,d\mu=
\int_{X}\phi_{n}\circ a({n}) \cdot \phi_{k}\circ a(k)\,d\mu.
$$
As in the previous case in view of \eqref{eq:phi_plus},
\begin{align*}
\big\|\phi_{n}\cdot \phi_{k}\circ a(k-n)\big\|_{C^\ell}
&\ll\big\|\phi_{n}\big\|_{C^\ell} \cdot \big\|\phi_{k}\circ a(k-n)\big\|_{C^\ell}
\ll (\ln T)^{\sigma''}
\end{align*}
for some $\sigma''>0$ depending  on $B$ and $\sigma$.
Since $\lfloor n\rfloor\ge B_1\ln\ln T$, the sum of the second terms in \eqref{eq:phi33} gives a negligible contribution when $B_1$ is chosen sufficiently large depending on $\sigma''$ and $\gamma_1$. 
Finally, the sum of the first terms in \eqref{eq:phi33} gives
$$
\sum_{(n,k)\in \cF_\Omega^\triangledown} \int_X 
\phi_n\circ a(n)\cdot \phi_k\circ a(k)\, d\mu.
$$
Combining the above estimates
for $\Phi^{(1)},\Phi^{(2)},\Phi^{(3)}$, the main result follows.
\end{proof}

\section{Approximation by smooth compactly-supported functions}
\label{sec:appccs}

In view of the tesselation \eqref{eq:tess}, we work with the characteristic functions
$\chi_{\Delta_{\Omega,{n}}}$ and their Siegel transforms
$\widehat\chi_{\Delta_{\Omega,{n}}}$. We also introduce their smoothened versions (cf. \cite[Section 8]{BFG2}).
Let $\rho_\eps\in C_c^\infty(G)$ with $\eps\in (0,1)$ be a family of bump-functions satisfying
\begin{equation} \label{rhoeps}
\rho_\eps\ge 0, \quad \hbox{supp}(\rho_\eps)\subset V_\eps,\quad 
\int_G \rho_\eps\, dm=1, \quad \|\rho_\eps\|_{C^\ell} \ll_\ell \eps^{-\sigma_\ell}
\end{equation}
for some $\sigma_\ell>0$.
Taking a decreasing function $\eps_T\to 0$ as
$T \ra \infty$ (to be specified later),  we define
\[
f_{\Omega,{n}}(v) := \rho_{\eps_T} * \chi_{\Delta_{\Omega,{n}}}(v)=
\int_G \rho_{\eps_T}(g)\chi_{\Delta_{\Omega,{n}}}(g^{-1}v)\, dm(g),\quad v\in\bR^3.
\]
We note that $0\le f_{\Omega,{n}}\le 1$, and it follows from \eqref{eq:subb}
that
\begin{equation}\label{eq:bbound}
\supp(f_{\Omega,{n}})\subset  
 \left[- (1+\varepsilon_T)\ct,(1+\varepsilon_T)\ct\right]^2
 \times 
\left((1-\varepsilon_T){\at}{\ct^{-2}},(1+\varepsilon_T)e^2 {\bt }{\ct^{-2}}\right].
\end{equation}
Also it follows from Lemma \ref{l:siegel} that 
\begin{align}\label{eq:f_uniform}
\widehat f_{\Omega,{n}} \ll_c \hbox{ht}.
\end{align}
However, the Siegel transform $\widehat f_{\Omega,{n}}$ is not uniformly bounded. We will need more precise control on its $L^p$-norms.

\begin{lem}\label{l:f_l1_l2} For every $n\in \cF_\Omega$,
$$
\left\|\widehat f_{\Omega, n}\right\|_{L^1(\mu)}\ll_c b
\qand
\left\|\widehat f_{\Omega, n}\right\|_{L^2(\mu)}\ll_c b^{1/2}.
$$    
\end{lem}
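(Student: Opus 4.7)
The plan is to deduce both bounds from the integral formulas of Siegel and Rogers (Theorem \ref{Thm_Siegel} and Corollary \ref{cor:Rogers}), after a direct volume computation for $\Delta_{\Omega,n}$.

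First, I will compute $\int_{\mathbb{R}^3} f_{\Omega,n}\, d\underline{z}$. Since $f_{\Omega,n}(v)=\int_G \rho_{\eps_T}(g)\,\chi_{\Delta_{\Omega,n}}(g^{-1}v)\,dm(g)$ and each $g\in\SL_3(\bR)$ preserves Lebesgue measure on $\bR^3$, Fubini gives
\[
\int_{\bR^3} f_{\Omega,n}(v)\,dv \;=\; \Vol(\Delta_{\Omega,n})\int_G \rho_{\eps_T}\,dm \;=\; \Vol(\Delta_{\Omega,n}).
\]
Next I bound the volume. Discarding the auxiliary constraint $y\in[e^{-n_1-n_2},Te^{-n_1-n_2}]$ and integrating out $y$ from the condition $a<|x_1x_2|y\le b$,
\[
\Vol(\Delta_{\Omega,n}) \;\le\; \int_{e^{-1}c<|x_1|,|x_2|\le c}\frac{b-a}{|x_1x_2|}\,dx_1\,dx_2 \;=\; 4(b-a)\;\le\;4b,
\]
since $\int_{e^{-1}c}^{c} x^{-1}\,dx=1$ (and the factor $2$ for each sign). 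Siegel's formula therefore yields
\[
\bigl\|\widehat{f}_{\Omega,n}\bigr\|_{L^1(\mu)} \;=\; \int_X \widehat{f}_{\Omega,n}\,d\mu \;=\; \int_{\bR^3} f_{\Omega,n}\,dv \;\ll_c\; b,
\]
which is the first bound.

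For the second bound, I apply Corollary \ref{cor:Rogers} to $f_{\Omega,n}$:
\[
\bigl\|\widehat{f}_{\Omega,n}\bigr\|_{L^2(\mu)}^2 \;\le\; \Bigl(\int_{\bR^3} f_{\Omega,n}\Bigr)^{\!2} \;+\; 2\zeta(3)^{-1}\zeta(3/2)^2\int_{\bR^3} f_{\Omega,n}^2.
\]
The first term is $\Vol(\Delta_{\Omega,n})^2=O_c(b^2)=O_c(b)$ since $b<1$. For the second, note that $0\le f_{\Omega,n}\le 1$: indeed $f_{\Omega,n}(v)$ is a $\rho_{\eps_T}$-weighted average of values of $\chi_{\Delta_{\Omega,n}}\in[0,1]$, and $\int \rho_{\eps_T}\,dm=1$. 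Consequently $f_{\Omega,n}^2\le f_{\Omega,n}$, so
\[
\int_{\bR^3} f_{\Omega,n}^2\,dv \;\le\; \int_{\bR^3} f_{\Omega,n}\,dv \;=\; \Vol(\Delta_{\Omega,n})\;\ll_c\;b.
\]
Combining these two estimates gives $\bigl\|\widehat{f}_{\Omega,n}\bigr\|_{L^2(\mu)}^2 \ll_c b$, hence $\bigl\|\widehat{f}_{\Omega,n}\bigr\|_{L^2(\mu)}\ll_c b^{1/2}$.

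There is no genuine obstacle: the only two observations one must make are the pointwise bound $f_{\Omega,n}\le 1$ (which lets $\int f_{\Omega,n}^2$ be controlled by $\int f_{\Omega,n}$ without any second volume computation involving the mollifier $\rho_{\eps_T}$), and the elementary logarithmic integral $\int_{e^{-1}c}^{c}x^{-1}\,dx=1$ which produces the desired linear dependence on $b$ rather than on $\ln b$.
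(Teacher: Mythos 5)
Your proof is correct, and it takes a slightly different route from the paper's. The paper observes that $\widehat{f}_{\Omega,n}=\rho_{\eps_T}*\widehat{\chi}_{\Delta_{\Omega,n}}$ as a convolution on the homogeneous space $X$, then uses the $G$-invariance of $\mu$ (for $L^1$) and Young's inequality $\|\rho_{\eps_T}*\widehat{\chi}_{\Delta_{\Omega,n}}\|_{L^2(\mu)}\le\|\rho_{\eps_T}\|_{L^1(m)}\|\widehat{\chi}_{\Delta_{\Omega,n}}\|_{L^2(\mu)}$ (for $L^2$), thereby reducing both bounds to Siegel and Rogers applied to the \emph{unsmoothed} characteristic function, where $\chi^2=\chi$ trivially controls $\int\chi^2$. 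You instead apply Siegel and Rogers directly to the smoothed function $f_{\Omega,n}$, using Fubini and $\SL_3$-invariance of Lebesgue measure to get $\int_{\bR^3}f_{\Omega,n}=\Vol(\Delta_{\Omega,n})$ and the pointwise bound $0\le f_{\Omega,n}\le 1$ to get $\int f_{\Omega,n}^2\le\int f_{\Omega,n}$. Both routes reduce the problem to the same volume estimate $\Vol(\Delta_{\Omega,n})\ll_c b$ (which you compute directly and the paper extracts from \eqref{eq:subb}); your version stays entirely on the $\bR^3$ side and avoids invoking Young's inequality on $X$, at the cost of the (easy) observation $f_{\Omega,n}\le 1$. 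Both are clean; this is a cosmetic difference in where the mollifier is absorbed.
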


\begin{proof}
We have
$$
\left\|\widehat f_{\Omega, n}\right\|_{L^1(\mu)}=\int_X \widehat \chi_{\Delta_{\Omega,n}}\, d\mu,
$$
and
$$
\left\|\widehat f_{\Omega, n}\right\|_{L^2(\mu)}
\le \left\|\rho_{\eps_T}\right\|_{L^1(m)}\cdot \left\|\widehat \chi_{\Delta_{\Omega,n}}\right\|_{L^2(\mu)}=
\left(\int_X \widehat \chi_{\Delta_{\Omega,n}}^2\, d\mu\right)^{1/2}.
$$
Therefore, the above estimates follow from Theorem \ref{Thm_Siegel} and Corollary \ref{cor:Rogers} respectively, taking \eqref{eq:subb} into account.
\end{proof}

The following result is a simplified version of \cite[Lemma 8.5]{BFG}.

\begin{lem}
\label{prop:appestL1}
When $\eps_T=O_c(a)$, for every $n\in \cF_\Omega$,
\begin{equation}
\left\|\left(\widehat{\chi}_{\Delta_{\Omega, n}}-\widehat{f}_{\Omega, n}\right)\circ a( n)\right\|_{L^1(\nu)} \\
\ll_{c} b\nonumber
\end{equation}
and moreover, 
\begin{equation}
\left\|\left(\widehat{\chi}_{\Delta_{\Omega, n}}-\widehat{f}_{\Omega, n}\right)\circ a( n)\right\|_{L^1(\nu)} \\
\ll_{c} \max\left(\varepsilon_T,-\frac{\varepsilon_T}{a}\ln\left(\frac{\varepsilon_T}{a}\right)\right)+e^{-n_1-n_2}.\nonumber
\end{equation}
\end{lem}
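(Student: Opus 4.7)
\medskip

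\textbf{Proof plan for Lemma \ref{prop:appestL1}.}

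The first bound is essentially for free. By the triangle inequality,
\[
\|(\widehat{\chi}_{\Delta_{\Omega,n}}-\widehat{f}_{\Omega,n})\circ a(n)\|_{L^1(\nu)}
\le
\|\widehat{\chi}_{\Delta_{\Omega,n}}\circ a(n)\|_{L^1(\nu)}
+
\|\widehat{f}_{\Omega,n}\circ a(n)\|_{L^1(\nu)}.
\]
By the containment \eqref{eq:subb}, the set $\Delta_{\Omega,n}$ lies in $[-c,c]^2\times(ac^{-2},e^2bc^{-2}]$, so Proposition~\ref{prop:L1} applied with $\beta\ll_c b$ yields the first summand $\ll_c b$. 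The same argument applies to $f_{\Omega,n}$ using the slightly enlarged support \eqref{eq:bbound}.

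For the refined bound, the plan is to exploit the convolution structure. Writing out $f_{\Omega,n}=\rho_{\eps_T}\ast\chi_{\Delta_{\Omega,n}}$, for every lattice $\Lambda$ one has
\[
\widehat{f}_{\Omega,n}(\Lambda)
=\int_G \rho_{\eps_T}(g)\,\widehat{\chi}_{\Delta_{\Omega,n}}(g^{-1}\Lambda)\,dm(g),
\]
so that, since $\int \rho_{\eps_T}\,dm=1$,
\[
|\widehat{\chi}_{\Delta_{\Omega,n}}-\widehat{f}_{\Omega,n}|(\Lambda)
\le
\int_G \rho_{\eps_T}(g)\,\widehat{\chi}_{\Delta_{\Omega,n}\triangle g\Delta_{\Omega,n}}(\Lambda)\,dm(g).
\]
Since $\eps_T=O_c(a)$, the symmetric-difference inclusion \eqref{eq:controled} applies: for each $g\in V_{\eps_T}$ there exist sets $E_1(g),\dots,E_{24}(g)$ covering $\Delta_{\Omega,n}\triangle g\Delta_{\Omega,n}$ and satisfying the containment \eqref{eq:controled1}--\eqref{eq:controled3} together with the volume bound \eqref{eq:es}. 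Integrating over $Y$ and swapping the order of integration reduces the problem to estimating $\int_Y \widehat{\chi}_{E_s(g)}\circ a(n)\,d\nu$ for each such $E_s(g)$.

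The key step is to apply the \emph{second} (quantitative) part of Proposition~\ref{prop:L1} to $f=\chi_{E_s(g)}$ in order to produce a genuine volume main term plus a decaying error. By \eqref{eq:controled1} and \eqref{eq:controled3}, each $E_s(g)$ is contained in $[-(e^2c^{-2}+1),e^2c^{-2}+1]^2\times[\alpha,\beta]$ with $\alpha\ge ac^{-2}/4>0$ and $\beta\ll_c 1$, so the function
\[
A(y):=\int_{\bR^2}\chi_{E_s(g)}(x_1,x_2,y)\,dx_1 dx_2=\Vol(E_s(g)^y)
\]
is well-defined and supported on $[\alpha,\beta]$. Because $E_s(g)$ is cut out by finitely many linear inequalities (noted after \eqref{eq:controled3}), the slice $E_s(g)^y$ is a planar polytope whose area $A(y)$ is piecewise polynomial of bounded degree in $y$ with a uniformly bounded number of breakpoints; together with the trivial containment bound $A(y)\ll_c 1$, this gives $M,M'\ll_c 1$ uniformly in $g\in V_{\eps_T}$ and in $s$. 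Proposition~\ref{prop:L1} then yields
\[
\int_Y \widehat{\chi}_{E_s(g)}\circ a(n)\,d\nu
=\Vol(E_s(g))+O_c(e^{-n_1-n_2}),
\]
and the main term is controlled by \eqref{eq:es}. Summing over the (at most) $24$ sets $E_s(g)$ and integrating against $\rho_{\eps_T}$ (whose total mass is $1$) produces the claimed estimate.

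The chief technical obstacle is the verification of $M'\ll_c 1$: one must check that the derivatives of the slice-area function remain uniformly controlled as $g$ varies over $V_{\eps_T}$ and as $n$ varies through $\cF_\Omega$. This is where the structural information from \cite[Lemma~4.3]{BFG}, namely that the $E_s$ arise from linear inequalities with coefficients bounded in terms of $c$, becomes essential; once this is in hand, everything else is book-keeping.
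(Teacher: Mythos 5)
Your overall route is the same as the paper's: bound the difference by a Siegel transform of sets covering the symmetric difference $\Delta_{\Omega,n}\triangle g\Delta_{\Omega,n}$ (via the convolution structure), invoke the $E_s$ covering from \eqref{eq:controled}, and then feed $\chi_{E_s}$ into the two parts of Proposition~\ref{prop:L1} together with the volume bound \eqref{eq:es}. Your first bound, obtained by the triangle inequality and two direct applications of the first part of Proposition~\ref{prop:L1} (to $\chi_{\Delta_{\Omega,n}}$ and $f_{\Omega,n}$ separately), is a slightly more elementary variant of the paper's route, which instead bounds the difference once by $\widehat{\chi}_{\partial\Delta_{\Omega,n}}$; both are fine and give the same $\ll_c b$.

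One micro-justification in your writeup is, however, incorrect as stated: you claim that because each slice $E_s^{\,y}$ is a polytope cut out by boundedly many linear inequalities and $A(y)=\Vol(E_s^{\,y})\ll_c 1$, it follows that $M,M'\ll_c 1$. Boundedness of a piecewise polynomial of bounded degree with boundedly many breakpoints does \emph{not} imply boundedness of its derivative when the breakpoints can be close together; for instance $A(y)=(\eps - y)/\eps$ on $[0,\eps]$ is linear, $\le 1$, yet $A'\equiv -1/\eps$. You do flag the verification of $M'\ll_c 1$ as ``the chief technical obstacle,'' and indeed it is: the needed uniformity must come from the specific structural properties of the $E_s$ in \cite[Lemma~4.3]{BFG} (together with the dichotomy in \eqref{eq:controled2}--\eqref{eq:controled3}, which lets one treat the ``thin in $y$'' sets differently from the ones with uniformly small slices), not from the general heuristic you invoke. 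To be fair, the paper itself simply states that the second part of Proposition~\ref{prop:L1} ``applies'' without exhibiting the $M,M'$ bounds, so your proof is no less detailed than the original on this point; but the heuristic you offer to close that gap is not valid.
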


\begin{proof}
It follows from the properties of $\rho_\eps$ stated in \eqref{rhoeps} that 
\begin{align*}
&\left\|\left(\widehat{\chi}_{\Delta_{\Omega,n}}-\widehat{f}_{\Omega, n}\right)\circ a( n)\right\|_{L^1(\nu)}\\[1.5mm]
=&\int_{Y}\left|\widehat{\chi}_{\Delta_{\Omega, n}}(a(n)x)-\int_{G}\rho_{\varepsilon_T}(g)\widehat{\chi}_{\Delta_{\Omega, n}}(g^{-1}a(n)x)\,dm(g)\right|\,d\nu(x)\nonumber \\[1.5mm]
= & \int_{Y}\left|\int_{G}\rho_{\varepsilon_T}(g)\left(\widehat{\chi}_{\Delta_{\Omega, n}}(a(n)x)-\widehat{\chi}_{\Delta_{\Omega, n}}(g^{-1}a(n)x)\right)\,dm(g)\right|d\nu(x)\nonumber \\[1.5mm]
\leq & \int_{Y}\int_{G}\rho_{\varepsilon_T}(g)\left|\widehat{\chi}_{\Delta_{\Omega, n}}(a(n)x)-\widehat{\chi}_{\Delta_{\Omega, n}}(g^{-1}a(n)x)\right|\,dm(g)d\nu(x)\nonumber \\[1.5mm]
\leq & \sup_{g\in V_{\eps_T}}\left\|\left(\widehat{\chi}_{\Delta_{\Omega, n}}-\widehat{\chi}_{g\Delta_{\Omega, n}}\right)\circ a(n)\right\|_{L^{1}(\nu)}\leq \left\|\widehat\chi_{\partial\Delta_{\Omega, n}}\circ a(n)\right\|_{L^1(\nu)},
\end{align*}
where 
$$
\partial \Delta_{\Omega, n}:=\left(\cup_{g\in V_{\eps_T}} g\Delta_{\Omega,n}\right)\backslash \left(\cap_{g\in V_{\eps_T}} g\Delta_{\Omega,n}\right).
$$
It follows from \eqref{eq:subb} that 
\begin{equation}
\label{def:DeltaOmega}
\partial\Delta_{\Omega,{n}} 
\subset
 \left[- (1+\eps_T)\ct,(1+\eps_T)\ct\right]^2
 \times 
 \left[(1-\eps_T){\at}{\ct^{-2}},(1+\eps_T)e^2 {\bt }{\ct^{-2}}\right].
\end{equation}
Then, the first assertion follows from the first part of Proposition \ref{prop:L1}. 
We further note that the set $\partial\Delta_{\Omega,{n}}$ is contained in the union of the sets $E_s$ given in \eqref{eq:controled}. 
Since these sets are defined by finitely many linear inequalities,
the second part of Proposition \ref{prop:L1}
applies to the functions $\chi_{E_s}$.
Therefore, the second estimate follows from 
Proposition~\ref{prop:L1} and \eqref{eq:es}.
\end{proof}

Similar argument also gives

\begin{lem}
\label{prop:appestL1_2}
When $\eps_T=O_c(a)$, for every $n\in \cF_\Omega$,
\begin{equation}
\left\|\widehat{\chi}_{\Delta_{\Omega, n}}-\widehat{f}_{\Omega, n}\right\|_{L^1(\mu)} \\
\ll_{c} \max\left(\varepsilon_T,-\frac{\varepsilon_T}{a}\ln\left(\frac{\varepsilon_T}{a}\right)\right).\nonumber
\end{equation}
\end{lem}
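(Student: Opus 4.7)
The plan is to mirror the argument of Lemma \ref{prop:appestL1}, but replace the appeal to Proposition \ref{prop:L1} (which handles $L^1(\nu)$ on the horocycle orbit $Y$) by a direct application of Siegel's identity (Theorem \ref{Thm_Siegel}) on all of $X$. Since the measure $\mu$ is $G$-invariant, the convolution structure produces a much cleaner reduction than in the $\nu$-case.

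Concretely, since $\rho_{\eps_T}$ is a probability density supported in $V_{\eps_T}$, applying Fubini and the triangle inequality in the form used in the proof of Lemma \ref{prop:appestL1} gives
\[
\left\|\widehat{\chi}_{\Delta_{\Omega,n}}-\widehat{f}_{\Omega,n}\right\|_{L^1(\mu)}
\le \sup_{g\in V_{\eps_T}} \left\|\widehat{\chi}_{\Delta_{\Omega,n}}-\widehat{\chi}_{g\Delta_{\Omega,n}}\right\|_{L^1(\mu)}
\le \left\|\widehat{\chi}_{\partial\Delta_{\Omega,n}}\right\|_{L^1(\mu)},
\]
where
\[
\partial\Delta_{\Omega,n}:=\Big(\bigcup_{g\in V_{\eps_T}} g\Delta_{\Omega,n}\Big)\setminus \Big(\bigcap_{g\in V_{\eps_T}} g\Delta_{\Omega,n}\Big).
\]
Here I use the $\mu$-invariance of $\widehat{\chi}_{\Delta_{\Omega,n}}\mapsto \widehat{\chi}_{\Delta_{\Omega,n}}\circ g$ under the $L^1(\mu)$-norm, which is what makes the $L^1(\mu)$ version more direct than the $L^1(\nu)$ version: no translation $a(n)$ survives.

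Next, since $\chi_{\partial\Delta_{\Omega,n}}$ is bounded, Riemann-integrable, and compactly supported (cf.\ \eqref{def:DeltaOmega} which will hold verbatim), Theorem \ref{Thm_Siegel} yields
\[
\left\|\widehat{\chi}_{\partial\Delta_{\Omega,n}}\right\|_{L^1(\mu)}
= \int_{X}\widehat{\chi}_{\partial\Delta_{\Omega,n}}\,d\mu
= \int_{\bR^3}\chi_{\partial\Delta_{\Omega,n}}(\underline{z})\,d\underline{z}
= \Vol(\partial\Delta_{\Omega,n}).
\]
Finally, the inclusion $\partial\Delta_{\Omega,n}\subset \bigcup_{s=1}^{24} E_s$ from \eqref{eq:controled}, combined with the volume bound \eqref{eq:es}, gives
\[
\Vol(\partial\Delta_{\Omega,n})\le \sum_{s=1}^{24}\Vol(E_s)\ll_c \max\!\left(\eps_T,\,-\frac{\eps_T}{a}\ln\!\left(\frac{\eps_T}{a}\right)\right),
\]
which is the asserted estimate.

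There is essentially no obstacle here; the technical work has already been done in \cite[Lemma 4.3]{BFG} (encoded in \eqref{eq:controled}--\eqref{eq:es}) and in Siegel's formula. The only point worth checking carefully is that the reduction to $\widehat{\chi}_{\partial\Delta_{\Omega,n}}$ goes through exactly as in Lemma \ref{prop:appestL1}, which it does because the derivation there does not use any property of $\nu$ beyond being a probability measure. Compared with the $L^1(\nu)$-bound, we gain the sharper $\max(\eps_T,-\frac{\eps_T}{a}\ln(\frac{\eps_T}{a}))$ term without the additional $e^{-n_1-n_2}$ correction, because Siegel's formula captures the full Euclidean volume rather than its horocyclic slice.
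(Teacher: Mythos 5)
Your proof is correct and follows the same route as the paper: reduce to $\|\widehat\chi_{\partial\Delta_{\Omega,n}}\|_{L^1(\mu)}$ as in Lemma~\ref{prop:appestL1}, then invoke Siegel's formula (Theorem~\ref{Thm_Siegel}) together with the covering by the sets $E_s$ and the bound \eqref{eq:es}. The one small caveat is that your appeal to ``$\mu$-invariance'' is not actually needed at the reduction step — the inequality $|\widehat\chi_{\Delta_{\Omega,n}}-\widehat\chi_{g\Delta_{\Omega,n}}|\le\widehat\chi_{\partial\Delta_{\Omega,n}}$ is pointwise for $g\in V_{\eps_T}$, exactly as in Lemma~\ref{prop:appestL1} — but this does not affect the validity of the argument.
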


\begin{proof}
Indeed, as in Lemma \ref{prop:appestL1}, with $\partial\Delta_{\Omega,{n}}$ defined in \eqref{def:DeltaOmega}, 
\begin{align*}
\left\|\widehat{\chi}_{\Delta_{\Omega,n}}-\widehat{f}_{\Omega, n}\right\|_{L^1(\mu)}\leq \left\|\widehat\chi_{\partial\Delta_{\Omega, n}}\right\|_{L^1(\mu)},
\end{align*}
and
$$
\left\|\widehat\chi_{\partial\Delta_{\Omega, n}}\right\|_{L^1(\mu)}\le \sum_s \left\|\widehat\chi_{E_s}\right\|_{L^1(\mu)}\ll_c
\max\left(\varepsilon_T,-\frac{\varepsilon_T}{a}\ln\left(\frac{\varepsilon_T}{a}\right)\right),
$$
where the last estimate follows from Theorem \ref{Thm_Siegel} and \eqref{eq:es}.
\end{proof}

\begin{lem}\label{p:app_l2}
When $\eps_T=O_c(a)$, for every $n\in \cF_\Omega$,
$$
\left\|\widehat f_{\Omega, {n}}-\widehat\chi_{\Delta_{\Omega,{n}}}\right\|_{{L}^2(\mu)}\ll_c
\max\left(\varepsilon_T,-\frac{\varepsilon_T}{a}\ln\left(\frac{\varepsilon_T}{a}\right)\right)^{1/2}.
$$
\end{lem}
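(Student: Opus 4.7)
The plan is to follow the strategy of Lemma~\ref{prop:appestL1_2}, but with Rogers' second-moment formula (Corollary~\ref{cor:Rogers}) replacing Siegel's mean-value formula (Theorem~\ref{Thm_Siegel}) in the final step.

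\emph{Step 1 (Reduction to the symmetric difference).} Starting from the convolution definition $\widehat f_{\Omega,n}(\Lambda)=\int_G\rho_{\eps_T}(g)\widehat\chi_{g\Delta_{\Omega,n}}(\Lambda)\,dm(g)$ together with $\int_G\rho_{\eps_T}\,dm=1$, and using the pointwise identity $|\chi_{g\Delta_{\Omega,n}}-\chi_{\Delta_{\Omega,n}}|=\chi_{g\Delta_{\Omega,n}\triangle\Delta_{\Omega,n}}$ valid for $g\in V_{\eps_T}$, combined with the inclusion $g\Delta_{\Omega,n}\triangle\Delta_{\Omega,n}\subset\partial\Delta_{\Omega,n}$ (with $\partial\Delta_{\Omega,n}$ defined as in the proof of Lemma~\ref{prop:appestL1}), I obtain the pointwise majorization
$$\bigl|\widehat f_{\Omega,n}-\widehat\chi_{\Delta_{\Omega,n}}\bigr|\le\widehat\chi_{\partial\Delta_{\Omega,n}}.$$
Consequently, the $L^2$-norm on the left is controlled by $\|\widehat\chi_{\partial\Delta_{\Omega,n}}\|_{L^2(\mu)}$.

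\emph{Step 2 ($L^2$-bound via Rogers).} By the cover $\partial\Delta_{\Omega,n}\subset\bigcup_s E_s$ from~\eqref{eq:controled} (a finite union of $s=1,\dots,24$ sets), the triangle inequality gives $\|\widehat\chi_{\partial\Delta_{\Omega,n}}\|_{L^2(\mu)}\le\sum_s\|\widehat\chi_{E_s}\|_{L^2(\mu)}$. Applying Corollary~\ref{cor:Rogers} to each $E_s$ yields
$$\bigl\|\widehat\chi_{E_s}\bigr\|_{L^2(\mu)}^{2}\le\Vol(E_s)^{2}+2\zeta(3)^{-1}\zeta(3/2)^{2}\,\Vol(E_s),$$
and the volume estimate~\eqref{eq:es} bounds $\Vol(E_s)\ll_c M:=\max\bigl(\eps_T,-(\eps_T/a)\ln(\eps_T/a)\bigr)$. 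Taking square roots and summing the finitely many $s$ then delivers the claimed estimate, provided the quadratic term is absorbed by the linear one.

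\emph{Main obstacle.} The only subtlety beyond the $L^1$ case is that Rogers' formula produces an extra $\Vol(E_s)^{2}$ contribution absent from Siegel's formula. To absorb it into the linear term we need $M$ to be bounded, which follows from the hypothesis $\eps_T=O_c(a)$: this forces the ratio $\eps_T/a$ to remain bounded, so both $\eps_T$ and $(\eps_T/a)|\ln(\eps_T/a)|$ stay $O_c(1)$, giving $M^{2}\ll_c M$. Everything else is a routine $L^1\to L^2$ upgrade of Lemma~\ref{prop:appestL1_2}.
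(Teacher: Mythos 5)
Your proof is correct and takes essentially the same approach as the paper: reduce to $\|\widehat\chi_{\partial\Delta_{\Omega,n}}\|_{L^2(\mu)}$, cover $\partial\Delta_{\Omega,n}$ by the sets $E_s$, and apply Corollary~\ref{cor:Rogers} together with the volume bound~\eqref{eq:es}. The only cosmetic difference is that you pass from the convolution to the boundary term via a pointwise triangle inequality, while the paper uses Jensen's inequality on the inner integral; both yield the same estimate, and your observation that $\eps_T=O_c(a)$ bounds $M$ (so that the quadratic term from Rogers is absorbed) is exactly what the paper implicitly uses.
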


\begin{proof}
It follows from the Jensen inequality that
\begin{align*}
&\left\|\widehat f_{\Omega, {n}}-\widehat\chi_{\Delta_{\Omega,{n}}}\right\|^2_{{L}^2(\mu)}\\
  =&\int_{X}\left(\int_{G}\rho_{\varepsilon_T}(g)\cdot \big(\widehat\chi_{\Delta_{\Omega,{n}}}(g^{-1}x)-\widehat\chi_{\Delta_{\Omega, n}}(x)\big)\,dm(g)\right)^2\,d\mu(x)\\
\le&   
\int_{X}\int_{G}\rho_{\varepsilon_T}(g)\cdot \big(\widehat\chi_{\Delta_{\Omega,{n}}}(g^{-1}x)-\widehat\chi_{\Delta_{\Omega, n}}(x)\big)^2\,dm(g)\,d\mu(x)\\
\leq & \sup_{g\in V_{\eps_T}}\left\|\widehat{\chi}_{\Delta_{\Omega, n}}-\widehat{\chi}_{g\Delta_{\Omega, n}}\right\|^2_{{L}^{2}(\mu)}\leq \left\|\widehat\chi_{\partial\Delta_{\Omega, n}}\right\|^2_{{L}^2(\mu)},
\end{align*}
where $\partial\Delta_{\Omega,{n}}$ is defined in \eqref{def:DeltaOmega}. Then as in the previous argument,
$$
\left\|\widehat\chi_{\partial\Delta_{\Omega, n}}\right\|_{{L}^2(\mu)}\le \sum_s \big\|\widehat\chi_{E_s}\big\|_{{L}^2(\mu)}\ll_c
\max\left(\varepsilon_T,-\frac{\varepsilon_T}{a}\ln\left(\frac{\varepsilon_T}{a}\right)\right)^{1/2},
$$
where we used Corollary \ref{cor:Rogers} and \eqref{eq:es} in the last estimate.
\end{proof}

We use a family of smooth functions $\eta_L : X \ra [0,1]$ with $L>1$ (cf. \cite[Lemma 4.11]{BG19}) such that
\begin{equation}
\label{etaL}
\{ \height \leq L/2 \} \subset \{ \eta_L = 1 \} \subset \supp(\eta_L) \subset 
\{ \height \leq 2L \},
\end{equation}
where $\height$ is the height function on $X$ defined in \eqref{Def_ht}, and
\begin{equation}
\label{DmetaL}
\| \eta_L \|_{C^\ell} \ll_\ell 1,  \quad \textrm{for all $L \geq 1$}.
\end{equation}
We use a parameter $L_T\to \infty$ as $T\to\infty$, to be specified later,
and introduce the following functions that will be used to 
construct approximations for the discrepancy:
\begin{align}
\psi_{\Omega,{n}} &:= \widehat\chi_{\Delta_{\Omega,{n}}} - \int_{X} \widehat\chi_{\Delta_{\Omega,n}}\, d\mu=
\widehat\chi_{\Delta_{\Omega,{n}}} - \Vol(\Delta_{\Omega,{n}})
, \label{eq:phiTRn} \\
\label{def_varphiTn}
\varphi_{\Omega,n} &:= \widehat{f}_{\Omega,n} \cdot \eta_{L_T} - \int_{Y} \left(\widehat{f}_{\Omega,n} \cdot \eta_{L_T}\right)\circ a(n) \,  d\nu.
\end{align}
It follows from \eqref{eq:f_uniform} and \eqref{etaL} that 
\begin{equation}\label{eq:phi_uniform}
|\varphi_{\Omega,n}|\ll_c L_T.
\end{equation}
We note that our definition of the function $\varphi_{\Omega,n}$ here
differs from the definition used in \cite[Sec. 8.2]{BFG} by
the constant
$$
\int_{X}\widehat{f}_{\Omega,{n}} \cdot \eta_{L_T}\,d\mu-\int_{Y}\left(\widehat{f}_{\Omega,{n}}\cdot \eta_{L_T}\right)\circ a({n})\,d\nu.
$$
Since the supports of the functions $f_{\Omega,{n}}$
are uniformly bounded (cf. \eqref{eq:bbound}), it follows from Theorem~\ref{Thm_Siegel}
that the first integral is uniformly bounded. 
By Proposition \ref{prop:L1}, the second integral is also 
uniformly bounded. Therefore, the estimate of 
\cite[Lemma 8.4]{BFG} still applies:
\begin{equation}\label{eq:norm_bound}
\big\|\widehat{f}_{\Omega,{n}} \cdot \eta_{L_T}\big\|_{C^\ell},\,
\|\varphi_{\Omega,n}\|_{C^\ell} \ll_\ell \eps_T^{-\sigma_\ell} \cdot L_T
\end{equation}
uniformly on $n$ and sufficiently large $T$ with some $\sigma_\ell>0$.

We consider the corresponding sums
\begin{align} \label{eq:sums}
\Psi_{\Omega}:=\sum_{n\in \cF_\Omega}\psi_{\Omega,{n}}\circ a({n})\qand
\Phi_\Omega := \sum_{n\in \cF_\Omega}\varphi_{\Omega,{n}}\circ a({n}).
\end{align}
It follows from \eqref{eq:tess} that for  $\ul x\in\bR^2$,
\begin{align*}
N_{a,b,T}(\ul x)
=|\Omega\cap \Lambda_{\ul x}|
=\sum_{n\in \cF_\Omega} \widehat\chi_{\Delta_{\Omega,{n}}}(a(n)\Lambda_{\ul x}),
\end{align*}
and
$$
V_\Omega:=\Vol(\Omega)=\sum_{n\in\cF_\Omega} \int_X \widehat\chi_{\Delta_{\Omega,{n}}}\, d\mu,
$$
so that 
\begin{equation}\label{eq:connn}
N_{a,b,T}(\ul x)-\Vol(\Omega)=\Psi_\Omega(\Lambda_{\ul x}).
\end{equation}
We recall from \eqref{eq;voll} that
\[
V_\Omega \sim 2 (\ln T)^2 (b-a) \quad \textrm{as $T \ra \infty$}.
\]
The following proposition shows that the sums 
$\Psi_\Omega$ and $\Phi_\Omega$ have the same asymptotic behavior on the scale of order $V_\Omega^{-1/2}$:
\begin{prop}
\label{lem:convinL1}
Suppose that some $K_0,K_1,K_2,\theta>0$,
\begin{align}\label{eq:cond_approx}
&a\leq b/2,\quad
(\ln T)^{-K_0}\leq \eps_T \le a (\ln T)^{-K_1},\\
&b^{-1/3}(\ln T)^{\theta}\le L_T \leq (\ln T)^{K_2},\;\; b\ge (\ln T)^{-2\min(1,K_1-1)+\theta}.\nonumber
\end{align}
Then
$$
\big\|\Psi_{\Omega}-\Phi_\Omega\big\|_{{L}^1(\nu)}=o\left(V_\Omega^{1/2}\right)\quad\mbox{as }T\to \infty.  
$$
\end{prop}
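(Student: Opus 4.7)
The plan is to decompose
\[
\psi_{\Omega,n} - \varphi_{\Omega,n} = \bigl(\widehat\chi_{\Delta_{\Omega,n}} - \widehat f_{\Omega,n}\bigr) + \widehat f_{\Omega,n}\bigl(1-\eta_{L_T}\bigr) - K_n,
\]
with scalar
\[
K_n := \Vol(\Delta_{\Omega,n}) - \int_Y \bigl(\widehat f_{\Omega,n}\eta_{L_T}\bigr)\circ a(n)\,d\nu,
\]
and, after summing over $n\in\cF_\Omega$ and passing to $L^1(\nu)$ via the triangle inequality, to estimate three contributions separately: a \emph{smoothing error} $\mathcal A$, a \emph{truncation error} $\mathcal B$, and a \emph{mean-correction} $\mathcal K$. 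The target is $\mathcal A+\mathcal B+\mathcal K = o(V_\Omega^{1/2})$, where $V_\Omega^{1/2}\asymp (\ln T)\,b^{1/2}$ by \eqref{eq;voll} (since $a\le b/2$).

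For $\mathcal A$, I would apply the second (stronger) bound of Lemma~\ref{prop:appestL1} to each summand and combine the cardinality bound \eqref{eq:cardQRT} with the elementary estimate $\sum_{n\in\cF_\Omega}e^{-n_1-n_2}\ll b\ln(1/b)$, which follows from $n_1+n_2\ge\alpha_\Omega\asymp\ln(1/b)$. Taking $K_1$ sufficiently large in $\eps_T\le a(\ln T)^{-K_1}$ then forces $\mathcal A = o((\ln T)b^{1/2})$. For $\mathcal K$, I would split each $K_n$ as
\[
K_n = \int_X\!(\widehat\chi_{\Delta_{\Omega,n}}-\widehat f_{\Omega,n})\,d\mu + \Big(\int_X \widehat f_{\Omega,n}\,d\mu - \int_Y \widehat f_{\Omega,n}\circ a(n)\,d\nu\Big) + \int_Y \widehat f_{\Omega,n}(1-\eta_{L_T})\circ a(n)\,d\nu,
\]
invoking Siegel's identity $\int_X\widehat f\,d\mu=\int_{\bR^3}f\,d\ul z$ from Theorem~\ref{Thm_Siegel}. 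The first term is controlled by Lemma~\ref{prop:appestL1_2}; the third by the estimate for $\mathcal B$ below; and the middle by the quantitative (second) part of Proposition~\ref{prop:L1}, whose hypotheses hold for the cross-sectional area $A(y)$ of $f_{\Omega,n}$ with $M\ll_c 1$ and $M'\ll_c\eps_T^{-\sigma_\ell}$, by the smoothing construction.

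The critical piece is $\mathcal B$. Using $1-\eta_{L_T}\le\chi_{\{\height>L_T/2\}}$ and Cauchy--Schwarz,
\[
\int_Y\widehat f_{\Omega,n}(1-\eta_{L_T})\circ a(n)\,d\nu\le \bigl\|\widehat f_{\Omega,n}\circ a(n)\bigr\|_{L^2(\nu)}\cdot\nu\bigl(\{\height\circ a(n)>L_T/2\}\bigr)^{1/2}.
\]
Proposition~\ref{prop:L2} gives the first factor as $\ll b^{1/2}\ln(b/a)\,\max(1,|n|e^{-\lfloor n\rfloor})^{1/2}$ and Lemma~\ref{lem:BnY} gives the second as $\ll L_T^{-3/2}+L_T^{-1}e^{-\lfloor n\rfloor/2}$. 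I would then split $\cF_\Omega$ according to whether $\lfloor n\rfloor$ exceeds a threshold $M\asymp \ln\ln T$: on the bulk $\{\lfloor n\rfloor\ge M\}$ the factor $\max(1,|n|e^{-\lfloor n\rfloor})$ collapses to $1$ and the moment and measure bounds multiply cleanly; on the boundary $\{\lfloor n\rfloor<M\}$, of cardinality only $O(M\ln T)$, the trivial estimate $\max(1,|n|e^{-\lfloor n\rfloor})\ll \ln T$ is affordable. The hypothesis $L_T\ge b^{-1/3}(\ln T)^\theta$ then converts the dominant $L_T^{-3/2}$ contribution into $\ll b(\ln T)^{2-3\theta/2}\ln(b/a)$, which divided by $V_\Omega^{1/2}$ tends to zero under the assumed lower bound on $b$.

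The main obstacle is exactly this truncation analysis: Proposition~\ref{prop:L2} deteriorates through $\max(1,|n|e^{-\lfloor n\rfloor})$ for small $\lfloor n\rfloor$, in precisely the same regime where the non-divergence estimate of Lemma~\ref{lem:BnY} weakens via $L_T^{-2}e^{-\lfloor n\rfloor}$. The two deteriorations must be balanced carefully through the split on $\lfloor n\rfloor$, and the parameter thresholds in \eqref{eq:cond_approx}---in particular the coupling $L_T\ge b^{-1/3}(\ln T)^\theta$ together with the lower bound $b\ge(\ln T)^{-2\min(1,K_1-1)+\theta}$---are calibrated precisely so that both the bulk and the boundary regimes contribute $o(V_\Omega^{1/2})$.
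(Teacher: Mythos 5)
Your top-level decomposition matches the paper's exactly, and your treatment of $\mathcal A$ and $\mathcal K$ is essentially viable (the paper splits the $n$-sum at $|n|\asymp\ln\ln T$ for these terms, and you would need the same split for $\mathcal K$-(ii) to kill the $\eps_T^{-\sigma_\ell}$ factor, but that is a minor point). The genuine gap is in $\mathcal B$.

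Your estimate for $\mathcal B$ applies the triangle inequality term by term and then pointwise Cauchy--Schwarz, producing
\[
\mathcal B\ \ll\ \sum_{n\in\cF_\Omega}\big\|\widehat f_{\Omega,n}\circ a(n)\big\|_{L^2(\nu)}\cdot\nu\big(\{\height\circ a(n)>L_T/2\}\big)^{1/2}.
\]
Running the numbers, the dominant ``bulk'' contribution is $\ll b(\ln T)^{2-3\theta/2}\ln(b/a)$, so that
\[
\mathcal B/V_\Omega^{1/2}\ \ll\ b^{1/2}(\ln T)^{1-3\theta/2}\ln(b/a).
\]
You claim this tends to zero ``under the assumed lower bound on $b$,'' but the hypothesis on $b$ is a \emph{lower} bound only: $b$ may equal a fixed constant in $(0,1)$, which is precisely the regime of Theorem~\ref{th:main1}. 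In that case, for every $\theta<2/3$ the right-hand side tends to $+\infty$. Since the Proposition is claimed for all $\theta>0$, your bound is insufficient. (Your own estimate only closes when $b$ decays near the bottom of its permitted range, because then $b^{1/2}\ll(\ln T)^{-1+\theta/2}$ absorbs the divergent factor.)

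The loss comes from treating the $n$-terms independently: the triangle inequality discards cancellation in the sum $\sum_n\widehat f_{\Omega,n}^{(L_T)}\circ a(n)$. The paper's proof handles this with two additional ideas you did not use. First, it introduces an \emph{intermediate} truncation scale $M_T=(\ln T)^{K_2'}$, $K_2'>K_2$, and decomposes $\widehat f^{(L_T)}_{\Omega,n}=\widehat f^{(M_T)}_{\Omega,n}+\widehat f^{(L_T,M_T)}_{\Omega,n}$. For the far tail $\widehat f^{(M_T)}_{\Omega,n}$, the pointwise bound $\widehat f\ll\height$ combined with Lemma~\ref{lem:BnY} and a dyadic sum over level sets gives $\int_Y\widehat f^{(M_T)}_{\Omega,n}\circ a(n)\,d\nu\ll M_T^{-2}+M_T^{-1}e^{-\lfloor n\rfloor}$, and summing yields $M_T^{-2}(\ln T)^2+M_T^{-1}\ln T=o(V_\Omega^{1/2})$ once $K_2'$ is large; this needs $M_T$, not $L_T$, to be large, which is why the intermediate scale is introduced. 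Second, and crucially, for the middle range $\widehat f^{(L_T,M_T)}_{\Omega,n}$, the paper centers the summands to obtain $g^{(L_T,M_T)}_{\Omega,n}$ with vanishing $\nu$-mean, bounds $L^1$ by $L^2$, and invokes the \emph{variance} estimate of Proposition~\ref{l:var_general}. That proposition replaces $\int_Y\big(\sum_n g^{(L_T,M_T)}_{\Omega,n}\circ a(n)\big)^2\,d\nu$ by a sum over the thin set $\cF_\Omega^\triangledown$ (of size $\ll(\ln T)^2(\ln\ln T)^2$) plus a manageable error, via the decorrelation Theorems~\ref{prop:integrability}--\ref{prop:ED}. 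After Cauchy--Schwarz on each cross term, $\|g^{(L_T,M_T)}_{\Omega,n}\|_{L^2(\mu)}\ll b^{1/3}L_T^{-1/2+\delta}$ (from H\"older, Lemma~\ref{l:integrable} and Proposition~\ref{prop:alphaintoverX}) gives a total of order $b^{1/2}(\ln T)^{1-\theta/2+o(1)}\ln\ln T$, which is $o(V_\Omega^{1/2})$ for \emph{all} $\theta>0$.

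So the missing ingredient is precisely the exploitation of decorrelation in the sum over $n$. Without passing to the $L^2$-norm of the full sum and applying Proposition~\ref{l:var_general}, the termwise estimate you propose cannot reach the exponent $\theta$ claimed in the statement.
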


\begin{proof}
Let 
$$
\widehat{f}_{\Omega,{n}}^{(L_T)}:=\widehat{f}_{\Omega,{n}}\cdot(1-\eta_{L_T})\qand
g_{\Omega,{n}}^{(L_T)}:=\widehat{f}_{\Omega,{n}}^{(L_T)}- 
\int_Y \widehat{f}_{\Omega,{n}}^{(L_T)}\circ a(n)\, d\nu.
$$
We start by observing (cf. \eqref{eq:phiTRn} and \eqref{def_varphiTn}) that
\begin{align*}
(\psi_{\Omega,{n}}- \varphi_{\Omega, {n}})\circ a({n})= & \left(\widehat{\chi}_{\Delta_{\Omega, {n}}}-\widehat{f}_{\Omega,{n}}\right)\circ a( n)+\widehat{f}_{\Omega,{n}}^{(L_T)}\circ a({n}) \\
& -\int_{X}\widehat{\chi}_{\Delta_{\Omega, {n}}}\,d\mu+\int_{Y}\widehat{\chi}_{\Delta_{\Omega, {n}}}\circ a({n})\,d\nu \\
& -\int_{Y}\widehat{\chi}_{\Delta_{\Omega, {n}}}\circ a({n})\,d\nu+\int_{Y}\widehat{f}_{\Omega,{n}}\circ a({n})\,d\nu \\
& -\int_{Y}\widehat{f}_{\Omega,{n}}^{(L_T)}\circ a({n})\,d\nu,
\end{align*}
so that
\begin{align}
\left\|\Psi_{\Omega}- \Phi_{\Omega}\right\|_{L^1(\nu)}=&
\left\|\sum_{n\in\cF_\Omega}(\psi_{\Omega,{n}}- \varphi_{\Omega, {n}})\circ a({n})\right\|_{L^1(\nu)}\nonumber\\
\leq &\, 2\sum_{n\in\cF_\Omega}\left\|\left(\widehat{\chi}_{\Delta_{\Omega, {n}}}-\widehat{f}_{\Omega,{n}}\right)\circ a( n)\right\|_{L^1(\nu)}\label{eq:tanya1} \\
&\quad +\sum_{n\in\cF_\Omega}\left|\int_{X}\widehat{\chi}_{\Delta_{\Omega, {n}}}\,d\mu-\int_{Y}\widehat{\chi}_{\Delta_{\Omega, {n}}}\circ a({n})\,d\nu\right|\label{eq:tanya2} \\
&\quad +\left\|\sum_{n\in\cF_\Omega}g_{\Omega,{n}}^{(L_T)}\circ a(n)\right\|_{L^1(\nu)}.\label{eq:tanya3}    
\end{align}
We estimate each of these terms separately.
We recall (cf. \eqref{eq:cardQRT} and \eqref{eq;voll}) that 
$$
|\cF_\Omega|\ll_c (\ln T)^2\qand V_\Omega\gg (\ln T)^2 b,
$$
since $a\le b/2$ and $b\ge (\ln T)^{-2+\theta}$.

To estimate \eqref{eq:tanya1}, 
we observe that by Lemma \ref{prop:appestL1},
$$
\left\|
\left(
\widehat{\chi}_{\Delta_{\Omega,{n}}}-\widehat{f}_{\Omega,{n}}
\right)
\circ a({n})
\right\|_{{L}^1(\nu)}
\ll_c b.
$$
We use this bound only when $|n|\le B\ln\ln T$
for a parameter $B>0$. This gives $O_B\big((\ln\ln T)^2\big)$ summands.
On the other hand, when $|n|>B\ln\ln T$,
we use the second part of Lemma \ref{prop:appestL1} to deduce that
\begin{align*}
\left\|\left(\widehat{\chi}_{\Delta_{\Omega,{n}}}-\widehat{f}_{\Omega,{n}}\right)\circ a({n})\right\|_{L^1(\nu)}
&\ll_c 
\max\left(\varepsilon_T,-\frac{\varepsilon_T}{a}\ln\left(\frac{\varepsilon_T}{a}\right)\right)+ e^{-n_1-n_2}\\
&\ll_{K_1} (\ln T)^{-K_1}\ln\ln T+ (\ln T)^{-B},
\end{align*}
where used that $\eps_T \le a (\ln T)^{-K_1}$.
Therefore, we conclude that
\begin{align*}
\sum_{n\in\cF_\Omega} \left\|\left(\widehat{\chi}_{\Delta_{\Omega,{n}}}-\widehat{f}_{\Omega,{n}}\right)\circ a({n})\right\|_{L^1(\nu)}   
\ll_{c,K_1,B} (\ln\ln T)^2 b + (\ln T)^{2-K_1}\ln\ln T+ (\ln T)^{2-B},
\end{align*}
which is $o(V_\Omega^{1/2})$, when $B$ is sufficiently large 
due to our assumption $b\ge (\ln T)^{-2(K_1-1)+\theta}.$

To estimate \eqref{eq:tanya2}, we note that by Theorem \ref{Thm_Siegel},
$$
\int_{X}\widehat{\chi}_{\Delta_{\Omega, {n}}}\,d\mu=\Vol(\Delta_{\Omega, {n}})\ll_c b.
$$
We apply Proposition \ref{prop:L1} to ${\chi}_{\Delta_{\Omega, {n}}}$. In order to do so, we need to show that 
uniformly in $n$
the function 
$$
A_n(y):=\int_{\bR^2} \chi_{\Delta_{\Omega, n}}(x_1,x_2,y)\, dx_1dx_2
$$
is bounded and $C^1$ except at finitely many points, and  has bounded derivative. From \eqref{Def_DeltaTn}, for $y\in [e^{-(n_1+n_2)},Te^{-(n_1+n_2)})$,
$$A_n(y)=4 \int_{ce^{-1}}^{c}
\left|[ce^{-1},c]\cap \left[a x_1^{-1}y^{-1}, b x_1^{-1}y^{-1}\right]\right|\,dx_1,
$$
and $A_n(y)=0$ otherwise. This function is clearly piecewise $C^1$,  and 
$$
|A_n'|\ll_c 1\qand |A_n'(y)|\ll_c b\cdot y^{-2}\quad\hbox{uniformly on $n$.}
$$
We note that also $A_n(y)=0$ when $y\le c^{-2}a$, so that 
since $a\ge (\ln T)^{2(K_1-K_0)}$,
we conclude that
$$
|A_n'(y)|\ll_c b\cdot a^{-2}\le (\ln T)^{2(K_1-K_0)}\quad\hbox{uniformly on $n$ and $y$. }
$$
Therefore, it follows from Proposition \ref{prop:L1} that
\begin{equation*}
\left|\int_{Y}\widehat{\chi}_{\Delta_{\Omega,{n}}}\circ a({n})\,d\nu -\Vol(\Delta_{\Omega,{n}})\right|\ll_c (\ln T)^{2(K_1-K_0)} e^{-n_1-n_2}.    
\end{equation*}
In any case, by the first part of Proposition \ref{prop:L1}, we also have 
\begin{equation*} 
\left|\int_{Y}\widehat{\chi}_{\Delta_{\Omega,{n}}}\circ a({n})\,d\nu -\Vol(\Delta_{\Omega,{n}})\right|\ll_c b. 
\end{equation*}
Considering the cases when $|n|< B\ln\ln T$ and when $|n|\ge B\ln\ln T$, we obtain that 
$$
\sum_{n\in \cF_\Omega}\left|\int_{Y}\widehat{\chi}_{\Delta_{\Omega,{n}}}\circ a({n})\,d\nu-\Vol(\Delta_{\Omega,n})\right|\ll_B(\ln\ln T)^2 b + (\ln T)^{2+2(K_1-K_0)-B}
$$
for any $B>0$. When $B$ is sufficiently large, this gives $o(V_\Omega^{1/2})$ because of our assumption on $b$.

Finally, we deal with \eqref{eq:tanya3}. 
Let us introduce a new parameter $M_T:=(\ln T)^{K_2'}\geq L_T$ for some $K_2'>K_2$
to be chosen later and set 
$$
\widehat{f}_{\Omega,{n}}^{(L_T,M_T)}:=\widehat{f}_{\Omega,{n}}\cdot(\eta_{M_T}-\eta_{L_T})\qand
g_{\Omega,{n}}^{(L_T,M_T)}:=
\widehat{f}_{\Omega,{n}}^{(L_T,M_T)}
-\int_Y \widehat{f}_{\Omega,{n}}^{(L_T,M_T)}\circ a(n)\, d\nu.
$$
Then 
$$
g_{\Omega,{n}}^{(L_T)}=g_{\Omega,{n}}^{(M_T)}+g_{\Omega,{n}}^{(L_T,M_T)}.
$$
We introduce the subsets
$$
B_{n,k}:=\left\{\Lambda\in Y:\,2^{k}\leq \textup{ht}(a(n)\Lambda)< 2^{k+1}\right\}.
$$
In view of \eqref{eq:f_uniform} and \eqref{etaL}, 
$$
\widehat f_{\Omega,{n}}^{(M_T)}\circ a({n})\ll_c  2^k\quad\;\hbox{on $B_{n,k}$}.
$$
Therefore, from Lemma \ref{lem:BnY}, we obtain that
\begin{align*}
\int_{Y} \widehat f_{\Omega,{n}}^{(M_T)}\circ a({n})\,d\nu&\ll_c \sum_{k:\, 2^{k+1}\ge M_T/2}2^{k}\cdot\nu(B_{n,k})\nonumber \\
&\ll \sum_{k:\, 2^{k+1}\ge M_T/2} 2^{k}\left(2^{-3k}+2^{-2k}\cdot e^{-\lfloor n\rfloor}\right)\nonumber\\
&\ll M_T^{-2}+M_T^{-1}\cdot e^{-\lfloor n\rfloor}.
\end{align*}
Then
\begin{align}\label{esss1}
\left\|\sum_{n\in\cF_\Omega}g_{\Omega,{n}}^{(M_T)}\circ a(n)\right\|_{L^1(\nu)}
&\le 2\sum_{n\in\cF_\Omega}  \int_Y \widehat{f}_{\Omega,{n}}^{(M_T)}\circ a({n})\,d\nu\\
&\ll M_T^{-2}(\ln T)^2+M_T^{-1}\ln T.\nonumber
\end{align}
This gives
$o(V_\Omega^{1/2})=o(b^{1/2}\ln T)$ when
we take $M_T=(\ln T)^{K_2'}$ with sufficiently large $K_2'$ due
to our assumption on $b$.

Now it remains to estimate the $L^1$-norm of 
$\sum_{n\in\cF_\Omega} g_{\Omega,{n}}^{(L_T,M_T)}\circ a(n)$.
We use that
$$
\left\|\sum_{n\in \cF_\Omega}g_{\Omega,{n}}^{(L_T,M_T)}\right\|_{L^1(\nu)}\le 
\left\|\sum_{n\in \cF_\Omega}g_{\Omega,{n}}^{(L_T,M_T)}\right\|_{L^2(\nu)}
$$
and apply Proposition \ref{l:var_general}  to the functions $g_{\Omega,{n}}^{(L_T,M_T)}$.
Since $0\le \eta_{L_T}\le 1$,
\begin{align*}
\left\|g_{\Omega,{n}}^{(L_T,M_T)}\circ a(n)\right\|_{{L}^2(\nu)}
&\le 2\left\|\widehat{f}_{\Omega, n}\circ a(n)\right\|_{{L}^2(\nu)},\\
\left\|g_{\Omega,{n}}^{(L_T,M_T)}\circ a({n})\right\|_{{L}^2(\nu_i)}&\leq \left\|\widehat{f}_{\Omega,{n}}\circ a({n})\right\|_{{L}^2(\nu_i)}+\left\|\widehat{f}_{\Omega, n}\circ a({n})\right\|_{{L}^1(\nu)}.
\end{align*}
Recalling \eqref{eq:bbound}, we deduce from Proposition \ref{prop:L2} that since $a\ge (\ln T)^{K_1-K_0}$,
\begin{equation}\label{eq:fff_n}
\left\|\widehat{f}_{\Omega, n}\circ a(n)\right\|_{{L}^2(\nu)}
\ll_c \max(1,\ln(b/a))\max(1,|n|)^{1/2}\ll_{K_0,K_1}(\ln\ln T) \max(1,|n|)^{1/2}.
\end{equation}
This verifies Condition (ii) of  Proposition \ref{l:var_general}.
Condition (iii) follow from Proposition \ref{prop:alphaintoverY} and 
the first part of Proposition \ref{prop:L1} using \eqref{eq:bbound}.
Using that $L_T,M_T\le (\ln T)^{K_2'}$ and $\varepsilon_T\ge (\ln T)^{-K_0}$,
we deduce that condition (iv) follows from \eqref{eq:norm_bound}. 
Now applying Proposition \ref{l:var_general}, we obtain that
for all $\delta>0$,
\begin{align}
\int_Y \left(\sum_{n\in \cF_\Omega}g_{\Omega,{n}}^{(L_T,M_T)}\circ a({n})\right)^2\, d\nu
= &\sum_{(n,k)\in \cF_\Omega^\triangledown} \int_X g_{\Omega,{n}}^{(L_T,M_T)}\circ a({n})\cdot
g_{\Omega,{k}}^{(L_T,M_T)}\circ a({k})\, d\mu \nonumber\\
&\;\;+O_{c,\delta}\left((1+(\ln T) b^{1-\delta}) (\ln T)^{\delta}\right),\label{eq:app}
\end{align}
where we additionally used that $a\ge (\ln T)^{-(K_0-K_1)}$.
 By the Cauchy--Schwarz inequality,
$$
\int_X g_{\Omega,{n}}^{(L_T,M_T)}\circ a({n})\cdot
g_{\Omega,{k}}^{(L_T,M_T)}\circ a({k})\, d\mu
\le \left\|g_{\Omega,{n}}^{(L_T,M_T)}\right\|_{L^2(\mu)}\cdot
\left\|g_{\Omega,{k}}^{(L_T,M_T)}\right\|_{L^2(\mu)},
$$
and 
$$
\left\|g_{\Omega,{n}}^{(L_T,M_T)}\right\|_{L^2(\mu)}
\le \left\|\widehat{f}_{\Omega,{n}}^{(L_T,M_T)}\right\|_{L^2(\mu)}+
\left\|\widehat{f}_{\Omega,{n}}^{(L_T,M_T)}\circ a(n)\right\|_{L^1(\nu)}.
$$
Since $\lfloor n\rfloor\geq B_1\ln\ln T$
when $(n,k)\in \cF_\Omega^\triangledown$, it follows from Theorem \ref{prop:ED} that 
$$
\int_{Y} \widehat{f}_{\Omega,{n}}^{(L_T,M_T)}\circ a({n})\,d\nu
=\int_{X}\widehat{f}_{\Omega,{n}}^{(L_T,M_T)}\,d\mu
+O\left(\left\|\widehat{f}_{\Omega,{n}}^{(L_T,M_T)}\right\|_{C^\ell} (\ln T)^{-\gamma_1 B_1} \right).
$$
Further, since $L_T,M_T\le (\ln T)^{K_2'}$ and $\varepsilon_T\ge (\ln T)^{-K_0}$, it follows from  \eqref{eq:norm_bound} that
$$
\left\|\widehat{f}_{\Omega,{n}}^{(L_T,M_T)}\right\|_{C^\ell}\ll_\ell (\ln T)^{\sigma-\gamma_1 B_1}
$$
for some $\sigma=\sigma(K_0,K_2')>0$.
Therefore, we conclude that 
$$
\left\|g_{\Omega,{n}}^{(L_T,M_T)}\right\|_{L^2(\mu)}\ll 
\left\|\widehat{f}_{\Omega,{n}}^{(L_T,M_T)}\right\|_{L^2(\mu)}+
(\ln T)^{\sigma-\gamma_1 B_1}.
$$
Using Lemma \ref{l:integrable} and 
Proposition \ref{prop:alphaintoverX},
we deduce from H\"older's Inequality that
for $p<3/2$
\begin{align*}
\left\|\widehat{f}_{\Omega,{n}}^{(L_T,M_T)}\right\|_{L^2(\mu)}^2
&\le \int_{\{\hbox{\tiny ht} \ge L_T/2\}} \widehat{f}_{\Omega,{n}}^2\, d\mu
\le \left( \int_X \widehat{f}_{\Omega,{n}}^{2p}\, d\mu\right)^{1/p}
\left( \int_X \chi_{\{\hbox{\tiny ht} \ge L_T/2\}}\, d\mu\right)^{(p-1)/p}\\
&\ll_{c,p,\delta} b^{2/3} L_T^{-(3-\delta)(p-1)/p}.
\end{align*}
This implies that for any $\delta>0$,
\begin{align*}
\left\|g_{\Omega,{n}}^{(L_T,M_T)}\right\|_{L^2(\mu)}
\ll_{c,\delta} b^{1/3} L_T^{-1/2+\delta}+(\ln T)^{\sigma-\gamma_1 B_1}.
\end{align*}
Therefore, since 
\begin{equation}\label{eq:FFF_o}
|\cF_\Omega^\triangledown|\ll_B |\cF_\Omega|(\ln\ln T)^2\ll_c  (\ln T)^2(\ln\ln T)^2,
\end{equation}
we obtain from \eqref{eq:app} that
\begin{align*}
\left\|\sum_{n\in \cF_\Omega}g_{\Omega,{n}}^{(L_T,M_T)}\circ a({n}) \right\|_{L^1(\nu)} \ll_{c,\delta,B} & \Big(\big(b^{1/3} L_T^{-1/2+\delta}+(\ln T)^{\sigma-\gamma_1 B_1}\big)^2  (\ln T)^2(\ln\ln T)^2 \\
&\quad\quad\quad+ (1+(\ln T) b^{1-\delta}) (\ln T)^{\delta}\Big)^{1/2}
\end{align*}
for any $\delta>0$.
We need to ensure that this bound is $o(V_\Omega^{1/2})=o(b^{1/2}\ln T)$, which follows
by taking $B_1$ sufficiently large and using that 
$L_T\ge b^{-1/3}(\ln T)^{\theta}$ and $b\ge (\ln T)^{-2+\theta}$.
\end{proof}

\section{Estimating the variance: second step} \label{sec:CompVar}

In  this section we analyze the variance with respect to the measure $\nu$ of the sums $\Phi_\Omega$ defined in \eqref{eq:sums}.
We assume that for some $K_0,K_1,K_2>0$ and $\theta>0$,
\begin{align}\label{eq:var_cond}
&a\le b/2,\;\;
(\ln T)^{-K_0}\leq \eps_T \le a (\ln T)^{-K_1}, \\
&b^{-1/3}(\ln T)^\theta\leq L_T \leq (\ln T)^{K_2}, \;\;
b\ge (\ln T)^{-\min(2,K_1)+\theta}.\nonumber
\end{align}
Throughout this section, we use that
$$
|\cF_\Omega|\ll_c (\ln T)^2\qand V_\Omega\gg (\ln T)^2 b,
$$
which follows from \eqref{eq:cardQRT} and \eqref{eq;voll},
since $a\le b/2$ and $b\ge (\ln T)^{-2+\theta}$.

We apply Proposition \ref{l:var_general} to the functions $\varphi_{\Omega,n}$ (cf. \eqref{def_varphiTn}).
Since $0\le \eta_{L_T}\le 1$,
\begin{align*}
\big\|\varphi_{\Omega, n}\circ a(n)\big\|_{{L}^2(\nu)}
&\le 2\left\|\widehat{f}_{\Omega, n}\circ a(n)\right\|_{{L}^2(\nu)},\\
\big\|\varphi_{\Omega,{n}}\circ a({n})\|_{{L}^p(\nu_i)} &\leq \left\|\widehat{f}_{\Omega, n}\circ a({n})\right\|_{{L}^p(\nu_i)}+\left\|\widehat{f}_{\Omega, n}\circ a({n})\right\|_{{L}^1(\nu)}.   
\end{align*}
Thus, Condition (ii) follows from \eqref{eq:fff_n},
and Condition (iii) follows from Proposition \ref{prop:alphaintoverY}
and the first part of Propositions \ref{prop:L1}.
In view of our assumptions on $\varepsilon_T$ and $L_T$,
Condition (iv) follows from \eqref{eq:norm_bound}. 
Applying Proposition \ref{l:var_general} and noting that $b\ge (\ln T)^{-2+\theta}$, we conclude that 
\begin{align*}
\int_Y\Phi_\Omega^2\,d\nu &=\sum_{(n,k)\in \cF_\Omega^\triangledown} \int_X 
\varphi_{\Omega, n}\circ a(n)\cdot \varphi_{\Omega, k}\circ a(k)\, d\mu +
O_{c,\delta}\Big((1+(\ln T)b^{1-\delta})(\ln T)^\delta\Big)
\end{align*}
for all $\delta>0$, so that 
\begin{align*}
\int_Y\Phi_\Omega^2\,d\nu&=\sum_{(n,k)\in \cF_\Omega^\triangledown} \int_X 
\varphi_{\Omega, n}\circ a(n)\cdot \varphi_{\Omega, k}\circ a(k)\, d\mu +
o(V_\Omega).
\end{align*}
Next, we show that
$$
\sum_{(n,k)\in \cF_\Omega^\triangledown} \int_X 
\varphi_{\Omega, n}\circ a(n)\cdot \varphi_{\Omega, k}\circ a(k)\, d\mu=
\sum_{(n,k)\in \cF_\Omega^\triangledown} \int_X 
\psi_{\Omega, n}\circ a(n)\cdot \psi_{\Omega, k}\circ a(k)\, d\mu +
o(V_\Omega).
$$
We have
\begin{align}\label{eq:long}
\int_{X}\varphi_{\Omega, n}\circ a({n}) \cdot \varphi_{\Omega, k}\circ a(k)\,d\mu
=&
\int_{X}\psi_{\Omega,n}\circ a({n}) \cdot \psi_{\Omega, k}\circ a(k)\,d\mu \\
&+\int_{X}(\varphi_{\Omega, n}-\psi_{\Omega, n})\circ a({n}) \cdot \varphi_{\Omega, k}\circ a(k)\,d\mu \nonumber\\
&+\int_{X}\psi_{\Omega, n}\circ a({n}) \cdot (\varphi_{\Omega, k}- \psi_{\Omega, k})\circ a(k)\,d\mu. \nonumber
\end{align}
By the Cauchy--Schwartz inequality,
$$
\left|\int_{X}(\varphi_{\Omega, n}-\psi_{\Omega, n})\circ a({n}) \cdot \varphi_{\Omega, k}\circ a(k)\,d\mu
\right|\le 
\left\|\varphi_{\Omega, n}-\psi_{\Omega, n}\right\|_{L^2(\mu)}
\cdot 
\left\|\varphi_{\Omega, k}\right\|_{L^2(\mu)}.
$$
Since $0\le \eta_{L_T}\le 1$, it follows from Lemma \ref{l:f_l1_l2} and Proposition \ref{prop:L1} that
$$
\left\|\varphi_{\Omega, k}\right\|_{L^2(\mu)}\le 
\left\|\widehat{f}_{\Omega, k}\right\|_{L^2(\mu)}+
\left\|\widehat{f}_{\Omega, k}\right\|_{L^1(\nu)}\ll_c b^{1/2}.
$$
The other factor is estimated as 
\begin{align*}
\left\|\varphi_{\Omega, n}-\psi_{\Omega, n}\right\|_{L^2(\mu)}\le& 
\left\|\widehat f_{\Omega,{n}}\cdot\eta_{L_T}-\widehat\chi_{\Delta_{\Omega,{n}}}\right\|_{{L}^2(\mu)}\\
&+
\left|\int_{Y}\left(\widehat{f}_{\Omega,{n}}\cdot \eta_{L_T}\right)\circ a({n})\,d\nu-\int_{X}\widehat\chi_{\Delta_{\Omega,{n}}}\,d\mu\right|.
\end{align*}
Since $\lfloor n\rfloor\geq B_1\ln\ln T$, it follows from Theorem \ref{prop:ED} that 
\begin{equation}\label{eq:conv0}
\int_{Y}\left(\widehat{f}_{\Omega,{n}}\cdot \eta_{L_T}\right)\circ a({n})\,d\nu
=\int_{X}\widehat{f}_{\Omega,{n}}\cdot \eta_{L_T}\,d\mu
+O\left(\left\|\widehat{f}_{\Omega,{n}}\cdot \eta_{L_T}\right\|_{C^\ell} (\ln T)^{-\gamma_1 B_1} \right).
\end{equation}
Since $L_T\le (\ln T)^{K_2}$ and $\varepsilon_T\ge (\ln T)^{-K_0}$,
it follows from  (\ref{eq:norm_bound}) that
$$
\left\|\widehat{f}_{\Omega,{n}}\cdot \eta_{L_T}\right\|_{C^\ell}\ll_\ell (\ln T)^{\sigma-\gamma_1 B_1}
$$
for some $\sigma=\sigma(K_0,K_2)>0$.
Therefore, the sum coming from the second terms in \eqref{eq:conv0} gives a negligible contribution when $B_1$ is chosen sufficiently large. Further,
$$
\left|\int_{X}\widehat{f}_{\Omega,{n}}\cdot \eta_{L_T}\,d\mu-\int_{X}\widehat\chi_{\Delta_{\Omega,{n}}}\,d\mu\right|\le
\left\|\widehat f_{\Omega,{n}}\cdot\eta_{L_T}-\widehat\chi_{\Delta_{\Omega,{n}}}\right\|_{{L}^2(\mu)},
$$
so that it remains to deal with these $L^2$-norms, which we estimate as
$$
\left\|\widehat{f}_{\Omega,{n}}\cdot \eta_{L_T}- \chi_{\Delta_{\Omega,{n}}}\right\|_{L^2(\mu)}
\le \left\|\widehat f_{\Omega, {n}}-\widehat\chi_{\Delta_{\Omega,{n}}}\right\|_{{L}^2(\mu)}+\left\|(1-\eta_{L_T})\cdot \widehat\chi_{\Delta_{\Omega,{n}}}\right\|_{{L}^2(\mu)}
$$
using that $0\le \eta_T\le 1$.
According to Lemma \ref{p:app_l2},
$$
\left\|\widehat f_{\Omega, {n}}-\widehat\chi_{\Delta_{\Omega,{n}}}\right\|_{{L}^2(\mu)}\ll_c
\max\left(\varepsilon_T,-\frac{\varepsilon_T}{a}\ln\left(\frac{\varepsilon_T}{a}\right)\right)^{1/2}.
$$
To estimate the last term, we apply the H\"older inequality with 
$1/p+1/q=1/2$, $q< 3$:
$$
\left\|(1-\eta_{L_T})\cdot \widehat\chi_{\Delta_{\Omega,{n}}}\right\|_{{L}^2(\mu)}\le \mu\Big(\{\height\ge L_T/2 \}\Big)
^{1/p} \left\|\widehat\chi_{\Delta_{\Omega,{n}}}\right\|_{{L}^q(\mu)}
\ll_{c,q} L_T^{-q/p}b^{1/3},
$$
where we used Lemma \ref{l:integrable} and Proposition \ref{prop:alphaintoverX}.

Combining the above estimates, we deduce that 
\begin{align*}
\int_{X}(\varphi_{\Omega, n}-\psi_{\Omega, n})\circ a({n}) &\cdot \varphi_{\Omega, k}\circ a(k)\,d\mu \\
=&\,
O_{c,q}\left(
b^{1/2}
\max\left(\varepsilon_T,-\frac{\varepsilon_T}{a}\ln\left(\frac{\varepsilon_T}{a}\right)\right)^{1/2}
+L_T^{-q/p}b^{5/6}
\right)\\
=&\,
O_{c,K_1,\delta}\left( b^{1/2} (\ln T)^{-K_1/2} (\ln\ln T)^{1/2}+ L_T^{-1/2+\delta}b^{5/6}
\right)
\end{align*}
for any $\delta>0$.
The last term in (\ref{eq:long}) can be estimated similarly. Using \eqref{eq:FFF_o}, we conclude that 
\begin{align*}
&\sum_{(n,k)\in \cF_\Omega^\triangledown} \int_X 
\varphi_{\Omega, n}\circ a(n)\cdot \varphi_{\Omega, k}\circ a(k)\, d\mu\\
= & \sum_{(n,k)\in\cF_\Omega^\triangledown}
\int_{X}\psi_{\Omega, n}\circ a({n}) \cdot \psi_{\Omega, k}\circ a(k)\,d\mu\\
 &\quad\quad + O_{c,B,K_1,\delta}\left( b^{1/2} (\ln T)^{2-K_1/2} (\ln\ln T)^{5/2}+ (\ln T)^{2} (\ln\ln T)^2 L_T^{-1/2+\delta} b^{5/6}\right)
\end{align*}
for every $\delta>0$.
Since
$L_T\ge b^{-1/3}(\ln T)^\theta$ and $b\ge (\ln T)^{-K_1+\theta}$, the last term is $o(V_\Omega)$ as $T\to\infty$.
Therefore, we conclude that 
$$
\int_{Y}\Phi_\Omega^{2}\, d\nu 
={\sum}_{(n,k)\in\cF_\Omega^\triangledown}
\int_{X}\psi_{\Omega, n}\circ a({n}) \cdot \psi_{\Omega, k}\circ a(k)\,d\mu
+o(V_\Omega)\quad\hbox{as $T\to \infty$.}
$$
Recalling \eqref{eq:phiTRn} and  using Theorem \ref{thm:Rogers}, we obtain 
\begin{align*}
\int_{X}\psi_{\Omega, n} \cdot \psi_{\Omega, k}\,d\mu
&=
\int_{X}\widehat\chi_{a(-n)\Delta_{\Omega,{n}}}\cdot\widehat\chi_{a(-k)\Delta_{\Omega, {n}}}\,d\mu-\Vol(\Delta_{\Omega,{n}})\cdot\Vol(\Delta_{\Omega,k})\\  
&= \zeta(3)^{-1} \sum_{p,q\in\bN} \Vol\left(p^{-1} \cdot a(-n)\Delta_{\Omega,n}\cap q^{-1}\cdot a(-k)\Delta_{\Omega,k}\right).
\end{align*}

The above computation verifies the following lemma:

\begin{lem}
\label{lem:variance}
Under the assumptions \eqref{eq:var_cond},
there exist constants $B,B_0,B_1>0$, depending only on $K_0,K_1,K_2$ such that
$$
\int_{Y}\Phi_\Omega^{2}\, d\nu 
=\zeta(3)^{-1}\sum_{(n,k)\in\cF_\Omega^\triangledown}
\sum_{p,q\in\bN} V_{p,q}(n,k)
+o(V_\Omega)\quad\hbox{as $T\to\infty$},
$$
where $\cF_\Omega^\triangledown=\cF_\Omega^\triangledown(B,B_0,B_1)$ is defined in \eqref{eq:ftri} and
$$
V_{p,q}(n,k):=\Vol\left(p^{-1} \cdot a(-n)\Delta_{\Omega,{n}}\cap q^{-1}\cdot a(-k)\Delta_{\Omega,k}\right).
$$
\end{lem}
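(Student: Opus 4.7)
My plan is to execute the three-step reduction already sketched by the paper's notation: apply Proposition \ref{l:var_general} to the smoothed centered functions $\varphi_{\Omega,n}$, then replace each $\varphi_{\Omega,n}$ by the true discrepancy $\psi_{\Omega,n}$ inside the diagonal sum over $\cF_\Omega^\triangledown$, and finally expand the $\psi$-correlations via Rogers' formula. For the first step I must verify the four hypotheses of Proposition \ref{l:var_general} for $\varphi_{\Omega,n}$: (i) holds by construction; (ii) follows from $\|\varphi_{\Omega,n}\circ a(n)\|_{L^2(\nu)}\le 2\|\widehat f_{\Omega,n}\circ a(n)\|_{L^2(\nu)}$ (using $0\le\eta_{L_T}\le 1$) combined with Proposition \ref{prop:L2} and the support bound \eqref{eq:bbound}; (iii) reduces to Proposition \ref{prop:alphaintoverY} together with the first assertion of Proposition \ref{prop:L1} to control the centering constant by $O_c(b)\le O_c(b^{p/2})$; (iv) follows from \eqref{eq:norm_bound} given the polynomial control on $L_T$ and $\varepsilon_T^{-1}$ in \eqref{eq:var_cond}. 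The error $O((1+(\ln T)b^{1-\delta})(\ln T)^\delta)$ returned by Proposition \ref{l:var_general} is absorbed into $o(V_\Omega)=o(b(\ln T)^2)$ because $b\ge(\ln T)^{-2+\theta}$.

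For the second step, inside each integral over $(n,k)\in\cF_\Omega^\triangledown$ I would split
$\varphi_{\Omega,n}\varphi_{\Omega,k}=\psi_{\Omega,n}\psi_{\Omega,k}+(\varphi_{\Omega,n}-\psi_{\Omega,n})\varphi_{\Omega,k}+\psi_{\Omega,n}(\varphi_{\Omega,k}-\psi_{\Omega,k})$
(omitting the $\circ a(\cdot)$) and apply Cauchy--Schwarz to each cross-term. The factor $\|\varphi_{\Omega,k}\|_{L^2(\mu)}\ll_c b^{1/2}$ follows from Lemma \ref{l:f_l1_l2} and Proposition \ref{prop:L1} using $0\le\eta_{L_T}\le 1$. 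The delicate factor $\|\varphi_{\Omega,n}-\psi_{\Omega,n}\|_{L^2(\mu)}$ I would decompose into three pieces: (a) the smoothing error $\|\widehat f_{\Omega,n}-\widehat\chi_{\Delta_{\Omega,n}}\|_{L^2(\mu)}$, bounded by Lemma \ref{p:app_l2}; (b) the truncation error $\|(1-\eta_{L_T})\widehat\chi_{\Delta_{\Omega,n}}\|_{L^2(\mu)}$, handled by H\"older with $1/p+1/q=1/2$, $q<3$, combining Lemma \ref{l:integrable} with Proposition \ref{prop:alphaintoverX}; and (c) the constant-correction term between $\int_Y(\widehat f_{\Omega,n}\cdot\eta_{L_T})\circ a(n)\,d\nu$ and $\int_X\widehat\chi_{\Delta_{\Omega,n}}\,d\mu$, which I would make negligible by applying Theorem \ref{prop:ED} with $r=1$ on $\cF_\Omega^\triangledown$ (where $\lfloor n\rfloor\ge B_1\ln\ln T$) together with the $C^\ell$-bound \eqref{eq:norm_bound}, choosing $B_1$ large enough to beat the polynomial loss. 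Summing these bounds over $|\cF_\Omega^\triangledown|\ll(\ln T)^2(\ln\ln T)^2$ pairs stays $o(V_\Omega)$ precisely under $L_T\ge b^{-1/3}(\ln T)^\theta$ and $b\ge(\ln T)^{-\min(2,K_1)+\theta}$ from \eqref{eq:var_cond}.

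Finally, for each fixed $(n,k)$ I would use invariance of $\mu$ to rewrite $\int_X\psi_{\Omega,n}\circ a(n)\cdot\psi_{\Omega,k}\circ a(k)\,d\mu=\int_X\widehat\chi_{a(-n)\Delta_{\Omega,n}}\cdot\widehat\chi_{a(-k)\Delta_{\Omega,k}}\,d\mu-\Vol(\Delta_{\Omega,n})\Vol(\Delta_{\Omega,k})$, and then apply Theorem \ref{thm:Rogers} to the remaining integral. The first term on the right-hand side of Rogers' formula cancels exactly the product of volumes; the Rogers term involving $g(-qz)$ vanishes because $a(-n)\Delta_{\Omega,n}$ has $y$-coordinate in $[1,T)$ and is hence disjoint from its negatives; what remains is $\zeta(3)^{-1}\sum_{p,q\in\bN}V_{p,q}(n,k)$, yielding the claim. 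The hardest part is Step 2: one must simultaneously balance the smoothing scale $\varepsilon_T$ (small for (a)), the truncation height $L_T$ (large for (b)), and the $C^\ell$-norms produced by smoothing and truncation (moderate so that Theorem \ref{prop:ED} is effective in (c)), all while keeping the total error below $V_\Omega^{1/2}\asymp b^{1/2}\ln T$ after inflation by $|\cF_\Omega^\triangledown|$. The lower bound $b\ge(\ln T)^{-\min(2,K_1)+\theta}$ is exactly the threshold that makes all these competing demands compatible within the window \eqref{eq:var_cond}.
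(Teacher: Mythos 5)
Your proposal reproduces the paper's own argument essentially verbatim: apply Proposition \ref{l:var_general} to $\varphi_{\Omega,n}$ after verifying hypotheses (i)--(iv) exactly as you describe, swap $\varphi_{\Omega,n}$ for $\psi_{\Omega,n}$ on $\cF_\Omega^\triangledown$ by Cauchy--Schwarz on the cross-terms with the same three ingredients (Lemma~\ref{p:app_l2}, the H\"older/truncation bound via Lemma~\ref{l:integrable} and Proposition~\ref{prop:alphaintoverX}, and Theorem~\ref{prop:ED} with $B_1$ large to kill the centering discrepancy), and then expand via Rogers' formula, noting correctly that the $g(-q\underline z)$ term vanishes because the sets $a(-n)\Delta_{\Omega,n}$ have positive third coordinate. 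This is the same decomposition, the same key lemmas, and the same parameter balancing as the paper; there is no gap.
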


The following lemma complements the above result:

\begin{lem}\label{lem:variancepositive}
Assume that for some $K_4>0$ and $\varepsilon>0$.
\begin{equation}\label{eq:var_cond333}
(\ln T)^{-K_4}\leq a\leq b (\ln \ln T)^{-4-\varepsilon}.
\end{equation}
Then
$$
V_\Omega^{-1} \cdot \sum_{(n,k)\in\cF_\Omega^\triangledown}
\sum_{p,q\in\bN} V_{p,q}(n,k) \longrightarrow 
\sum_{m\in\mb Z^2} \sum_{p,q\in\bN} V_{p,q}(m)\quad\hbox{ as $T\to\infty$,}
$$
where $\cF_\Omega^\triangledown$ is defined in \eqref{eq:ftri} 
and
$$
V_{p,q}(m):=\Vol\left(p^{-1}\cdot a(m) \Delta_c\cap q^{-1}\cdot \Delta_c\right)
$$
with $\Delta_c$ defined in \eqref{eq:Deltac}.
\end{lem}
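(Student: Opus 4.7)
The plan is to reorganize the double sum by substituting $m := k - n$ and exploiting the fact that $\Delta_{\Omega,n}$ depends on $n$ only through $s := n_1+n_2$ (see \eqref{Def_DeltaTn}). Writing $\widetilde\Delta_s := \Delta_{\Omega,n}$ and applying the volume-preserving linear map $a(-n)$, one obtains
\[
V_{p,q}(n, n+m) = \Vol\bigl(p^{-1}\widetilde\Delta_s \cap q^{-1} a(-m)\widetilde\Delta_{s+r}\bigr) =: \widetilde V_{p,q}(s, m),
\]
with $r := m_1+m_2$. Thus the left-hand side becomes $\sum_{p,q}\sum_m \sum_s N(s, m)\,\widetilde V_{p,q}(s, m)$, where $N(s, m)$ counts $n \in \bN_o^2$ with $n_1+n_2 = s$ and satisfying the constraints from \eqref{eq:ftri}.

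Next, I would compute $\widetilde V_{p,q}(s, m)$ by integrating first over $(x_1, x_2)$ (a region independent of $s$), and then over $y$. For $u := |x_1 x_2|$ in this region, the $y$-interval is the intersection of $\bigl(a/(u\min(p,q)^3),\, b/(u\max(p,q)^3)\bigr]$ with $\bigl[e^{-s}/\min(p,q),\, Te^{-s}/\max(p,q)\bigr)$; in an \emph{interior} range of $s$ depending on $u$, it reduces to the first interval, of length $(b/\max(p,q)^3 - a/\min(p,q)^3)/u$ independent of $s$, while outside this range the interval is strictly smaller. Using $N(s, m) \approx s - O(\ln\ln T)$ on the admissible range and summing over the interior $s$ (of length of order $\ln(Ta/b)$), the $s$-sum produces a factor of order $(\ln T)^2$; combined with the $(x_1,x_2)$-integral and identifying the resulting integrand with that of $V_{p,q}(m) = \Vol(p^{-1}\Delta_c \cap q^{-1} a(-m)\Delta_c)$, the main term is $V_\Omega \cdot V_{p,q}(m)$ to leading order. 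Passing from the finite $m$-sum ($|m| < B\ln\ln T$) to $\sum_{m\in\bZ^2}$ is handled by dominated convergence, using that for fixed $(p,q)$ the compatibility condition $|m_i - \ln(p/q)| < 1$ forces $V_{p,q}(m)$ to vanish outside a finite set of $m$'s.

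The main obstacle is to control three types of error terms simultaneously. First, the contribution from $s$ near the endpoints of the interior range, where the $y$-interval is determined by $[e^{-s}/\min(p,q), Te^{-s}/\max(p,q))$ rather than by the $(a,b)$-constraints; here the hypothesis $b \ge (\ln T)^{-6/5+\theta}$ (imposed earlier) ensures the interior length dominates the boundary contribution after division by $V_\Omega \sim 2(\ln T)^2(b-a)$. Second, replacing the length $b/\max(p,q)^3 - a/\min(p,q)^3$ by its main part $b/\max(p,q)^3$ introduces a relative correction of order $(a/b)(\max(p,q)/\min(p,q))^3$ per term; summed over the $\asymp (\ln\ln T)^{O(1)}$-many admissible $m$'s, this cumulative error must remain $o(1)$, which is precisely calibrated by the assumption $a \le b(\ln\ln T)^{-4-\varepsilon}$. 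Third, the $\lfloor n\rfloor \ge B_1\ln\ln T$ cutoff in \eqref{eq:ftri} contributes $O((\ln T)\ln\ln T)$ to $N(s,m)$, subdominant to the main $(\ln T)^2$-term. Assembling these estimates and invoking polynomial decay $V_{p,q}(m) \ll \max(p,q)^{-3}$ to justify the interchange of limits yields the stated identity.
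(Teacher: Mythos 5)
Your overall strategy — reorganizing by the difference $m:=k-n$, replacing $\Delta_{\Omega,n}$ by an $n$-independent set up to controllable boundary errors, identifying the main term with $V_\Omega\cdot V_{p,q}(m)$, and invoking dominated convergence with the $\max(p,q)^{-3}$ tail — matches the skeleton of the paper's argument. But the step where the hypothesis $a\le b(\ln\ln T)^{-4-\varepsilon}$ actually enters contains a genuine gap.

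You quantify the cost of dropping the $a$-lower-bound on the product as a \emph{relative} correction of order $(a/b)(\max(p,q)/\min(p,q))^3$ per term. The compatibility constraint forces $\max(p,q)/\min(p,q)\asymp e^{|m|}$, so on the range $|m|<B\ln\ln T$ this relative factor can be as large as $(a/b)(\ln T)^{3B}$, which is not $o(1)$ under the assumed $a\le b(\ln\ln T)^{-4-\varepsilon}$. Multiplying the main term $\sum_{p,q}V_{p,q}(m)\ll e^{-|m|/2}$ by a relative error $\sim (a/b)e^{3|m|}$ leaves a diverging sum $\sum_m (a/b)e^{5|m|/2}$. Asserting that the hypothesis ``precisely calibrates'' this is not a proof. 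The paper sidesteps this by bounding the \emph{absolute} error via a Cauchy--Schwarz inequality for volumes of nested sets,
\[
\Vol(\widetilde\Delta_1\cap \widetilde\Delta_2)-\Vol(\Delta_1\cap\Delta_2)
\le \Vol(\widetilde\Delta_1\setminus \Delta_1)^{1/2}\Vol(\widetilde\Delta_2)^{1/2}
+\Vol(\Delta_1)^{1/2}\Vol(\widetilde\Delta_2\setminus \Delta_2)^{1/2},
\]
which yields an $m$-independent bound $\ll p^{-3/2}q^{-3/2}\sqrt{ab}$. Then $\sqrt{ab}\le b(\ln\ln T)^{-2-\varepsilon/2}$, and multiplying by $|\cF_\Omega^\triangledown|\ll(\ln T)^2(\ln\ln T)^2$ gives $o(V_\Omega)$ — this is exactly how the exponent $4$ appears. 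Without this (or an equivalent device) the error analysis does not close.

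A secondary misattribution: you invoke $b\ge (\ln T)^{-6/5+\theta}$ to control the $s$-endpoint contribution where the $y$-range is cut by $[e^{-s},Te^{-s})$ rather than by $(a,b)$. That is not what is used. The relevant input is $a\ge (\ln T)^{-K_4}$, which shows the number of $n$ with $\Delta_{\Omega,n}\ne\Delta_\Omega$ is $O(\ln T\cdot\ln\ln T)$, and combined with the trivial per-term bound $\ll b$ and $V_\Omega\gg b(\ln T)^2$ this gives $o(V_\Omega)$; no lower bound on $b$ in terms of $\ln T$ is needed in this lemma.
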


\begin{proof}
We note that from \eqref{eq;voll} and our assumption \eqref{eq:var_cond333},
\begin{equation}\label{eq:FFF_o1}
V_\Omega\sim (\ln T)^2 b\quad\hbox{as $T\to\infty$}
\end{equation}
and from \eqref{eq:subb}, 
$$
a(-m)\Delta_{\Omega,m}\subset \big(e^{-m_1-1}c,e^{-m_1}c\big]\times \big(e^{-m_2-1}c,e^{-m_2}c\big]\times \big(e^{m_1+m_2}ac^{-2},e^{m_1+m_2+2}bc^{-2}\big].
$$
Let $K_{p,q}(n):=\{k:\, V_{p,q}(n,k)\ne 0\}$.
Then when $k\in K_{p,q}(n)$,
\begin{equation}
\label{eq:nonemptyint}
\frac{e^{-n_i-1}}{p}\leq \frac{e^{-k_i}}{q}\qand \frac{e^{-k_i-1}}{q}\leq \frac{e^{-n_i}}{p}\quad\quad \hbox{for $i=1,2$.}   
\end{equation}
This gives at most three possibilities for $k_i$ 
when the other parameters are fixed, and it follows that 
$|{K_{p,q}(n)}|\le  9.$
Therefore, it follows from the Cauchy--Schwarz inequality that 
\begin{align}\label{eq:prev}
&\sum_{(n,k)\in\cF_\Omega^\triangledown} \sum_{p,q\in\bN} V_{p,q}(n,k)\\    
\le & \, \sum_{n\in \cF_\Omega} \sum_{p,q\in\bN}
\sum_{k\in K_{p,q}(n)} 
\Vol\left(p^{-1} \cdot a(-n)\Delta_{\Omega,n}\right)^{1/2}\cdot \Vol\left( q^{-1}\cdot a(-k)\Delta_{\Omega,k}\right)^{1/2} \nonumber\\
\le & \, \sum_{n\in \cF_\Omega} \sum_{p,q\in\bN}
\sum_{k\in K_{p,q}(n)} 
p^{-3/2}\Vol\left(\Delta_{\Omega,n}\right)^{1/2}\cdot q^{-3/2}\Vol\left(\Delta_{\Omega,k}\right)^{1/2} 
\ll_c \, |\cF_\Omega| b \ll V_\Omega. \nonumber
\end{align}
We refine this computation to show that the normalized above sum converges.
Let us introduce the following super-sets of 
$\Delta_{\Omega,{n}}$:
\begin{equation}
\label{eq:DeltaRTntilde0}
\Delta_{\Omega} := \left\{ (x_1,x_2,y)\in \bR^3 \,  : \,  
\begin{array}{c}
a < |x_1 x_2| \, y \leq b \\[0.3cm]
e^{-1}c < |x_1|, |x_2| \leq c 
\end{array} 
\right\}\subset [-c,c]^2\times [ac^{-2},e^2bc^{-2}].
\end{equation}
We observe that $\Delta_{\Omega}=\Delta_{\Omega,{n}}$
unless
\begin{equation}\label{eq:ccond0}
e^2 bc^{-2}\ge T e^{-(n_1+n_2)}\quad\hbox{or}\quad ac^{-2}\le e^{-(n_1+n_2)}.  
\end{equation}
The first condition in \eqref{eq:ccond0} gives a subset of $n\in\mathcal{F}_\Omega$ with
$$
\ln\big(T e^{-2}b^{-1}c^2\big)\le n_1+n_2<\beta_\Omega=\ln\big(Ta^{-1}c^2\big),
$$
and the number of such $n$ is estimated as
\begin{equation}\label{eq:t1}
\le \left(\ln\big(Ta^{-1}c^2\big)+1\right)\cdot \left(\ln\big(e^2ba^{-1}\big)+1\right) \ll_{K_4} \ln T\cdot \ln\ln T.
\end{equation}
Similarly, the second condition in \eqref{eq:ccond0} gives a subset of $n\in\mathcal{F}_\Omega$ with
$$
\alpha_\Omega=\ln\big(e^{-2}b^{-1}c^2\big)\le n_1+n_2\le\ln\big(a^{-1}c^2\big),
$$
and the number of such $n$ is estimated as
\begin{equation}\label{eq:t2}
\le \left(\ln\big(a^{-1}c^2\big)+1\right) \left(\ln\big(e^2ba^{-1}\big)+1\right)
\ll_{K_4} (\ln\ln T)^2.
\end{equation}
Let $\cF_\Omega^\diamond$ denote the subset of $(n,k)\in \cF_\Omega^\triangledown$
where both $n$ and $k$ do not satisfy \eqref{eq:ccond0}.
Then arguing as in \eqref{eq:prev}, we conclude that
$$
\sum_{(n,k)\in\cF_\Omega^\triangledown\setminus \cF_\Omega^\diamond} \sum_{p,q\in\bN} V_{p,q}(n,k)\ll_c \ln T\cdot \ln\ln T\cdot b =o(V_\Omega).
$$
Therefore, it remains to analyze the sum 
$\sum_{(n,k)\in\cF_\Omega^\diamond} \sum_{p,q\in\bN} V'_{p,q}(n,k),$
where
$$
V'_{p,q}(n,k):=\Vol\left(p^{-1} \cdot a(-n)\Delta_{\Omega}\cap q^{-1}\cdot a(-k)\Delta_{\Omega}\right).
$$
Further, we consider the sets
\begin{equation}
\label{eq:DeltaRTntilde}
\widetilde\Delta_{\Omega} := \left\{ (x_1,x_2,y) \,  : \,  
\begin{array}{c}
0 < |x_1 x_2| \cdot y \leq b \\[0.3cm]
e^{-1}c < |x_1|, |x_2| \leq c 
\end{array} 
\right\}
\end{equation}
and define
\begin{align*}
\widetilde V_{p,q}(n,k):=&\Vol\left(p^{-1} \cdot a(-n)\widetilde\Delta_{\Omega}\cap q^{-1}\cdot a(-k)\widetilde\Delta_{\Omega}\right)\\
=&\,b\cdot \Vol\left(p^{-1} \cdot a(-n)\Delta_c\cap q^{-1}\cdot a(-k)\Delta_c\right)=b\cdot V_{p,q}(n-k).
\end{align*}
It follows from our assumption \eqref{eq:var_cond333} that
$$
\Vol\left(\widetilde\Delta _{\Omega}\setminus \Delta_{\Omega}\right)\ll_{c} a\le b\cdot (\ln\ln T)^{-4-\varepsilon}.
$$
We shall use the following estimate: for subsets $\Delta_1\subset \widetilde\Delta_1$ and $\Delta_2\subset\widetilde\Delta_2$ of $\bR^3$,
\begin{align*}
\Vol(\widetilde\Delta_1\cap \widetilde\Delta_2)-\Vol(\Delta_1\cap\Delta_2)
\le  &\Vol(\widetilde\Delta_1\setminus \Delta_1)^{1/2}\Vol(\widetilde\Delta_2)^{1/2}\\
&\quad+\Vol(\Delta_1)^{1/2} \Vol(\widetilde\Delta_2\setminus \Delta_2)^{1/2},
\end{align*}
which follows from the Cauchy--Schwarz Inequality.
This gives
$$
\widetilde V_{p,q}(n,k)=V_{p,q}'(n,k)+O_c\left(p^{-3/2}q^{-3/2}\cdot b\cdot (\ln\ln T)^{-2-\varepsilon/2}\right),
$$
so that in view of \eqref{eq:FFF_o} and \eqref{eq:FFF_o1}
$$
\sum_{(n,k)\in\cF_\Omega^\diamond} \sum_{p,q\in\bN} \widetilde V_{p,q}(n,k)
=\sum_{(n,k)\in\cF_\Omega^\diamond} \sum_{p,q\in\bN} V'_{p,q}(n,k)
+o(V_\Omega)\quad\hbox{as $T\to\infty$.}
$$
Now it remains to compute the limit
$$
\lim_{T\to\infty} (\ln T)^{-2} \sum_{(n,k)\in\cF_\Omega^\diamond} \sum_{p,q\in\bN} V_{p,q}(n-k).
$$
It follows from the Cauchy--Schwarz inequality that 
\begin{equation}\label{eq:vpq}
V_{p,q}(m)\ll_c p^{-3/2} q^{-3/2}\quad\hbox{uniformly on $m\in\bZ^2$.}
\end{equation}
We recall that pairs $(n,k)\in \cF_\Omega^\triangledown$ satisfy 
$|n-k|< B\, \ln\ln T$, so that taking \eqref{eq:t1} and \eqref{eq:t2} into account,
we obtain that
$$
|\cF_\Omega^\triangledown\setminus\cF_\Omega^\diamond|\ll_{B,K_4} \ln T\cdot (\ln\ln T)^3. 
$$
Therefore, because of \eqref{eq:vpq},
$$
\sum_{(n,k)\in\cF_\Omega^\diamond} \sum_{p,q\in\bN} V_{p,q}(n-k)
=\sum_{(n,k)\in\cF_\Omega^\triangledown} \sum_{p,q\in\bN} V_{p,q}(n-k)+o\big((\ln T)^2\big).
$$
Let $\cF_\Omega^\square$ consist of $(n,k)\in \cF_\Omega\times \cF_\Omega$ such that 
$|n-k|< B\, \ln\ln T$. We note that the number of pairs
$(n,k)\in \cF_\Omega^\square$ such that either
$|n|< B_0\ln\ln T$ or $\lfloor n \rfloor <  {B_1}\ln\ln T$
is $O_{B,B_0,B_1,c}\big(\ln T (\ln\ln T)^3\big)$.
Thus, because of \eqref{eq:vpq},
$$
\sum_{(n,k)\in\cF_\Omega^\triangledown} \sum_{p,q\in\bN} V_{p,q}(n-k)
=\sum_{(n,k)\in\cF_\Omega^\square} \sum_{p,q\in\bN} V_{p,q}(n-k)+o\big((\ln T)^2\big).
$$
We observe that when $V_{p,q}(n-k)\ne 0$,
$$p^{-1}\cdot a(-n) \Delta_c\cap q^{-1}\cdot a(-k)\Delta_c\ne \emptyset$$
and
$pe^{n_i}\asymp_c qe^{k_i}$ for $i=1,2$, so that $\max(p,q)\gg_c e^{|n-k|}$. Hence, using \eqref{eq:vpq},
\begin{align*}
\sum_{(n,k)\in(\cF_\Omega\times\cF_\Omega) \setminus\cF_\Omega^\square} \sum_{p,q\in\bN} V_{p,q}(n-k) 
&\ll_c |\cF_\Omega|^2 \sum_{p,q\in\bN:\, \max(p,q)\gg_c (\ln T)^{B}}p^{-3/2} q^{-3/2}\\
&=o\big((\ln T)^2\big)
\end{align*}
when $B$ is sufficiently large.
Finally,
\begin{align*}
(\ln T)^{-2}\sum_{n,k\in\cF_\Omega} \sum_{p,q\in\bN} V_{p,q}(n-k)
=\sum_{m\in \bZ^2} \sum_{p,q\in\bN} \frac{|\cF_\Omega\cap (m+\cF_\Omega)|}{(\ln T)^2}\cdot V_{p,q}(m).
\end{align*}
We observe that when $V_{p,q}(m)\ne 0$, we have $p e^{m_i}\asymp_c q$ for $i=1,2$, and it follows that $\max(p,q)\gg_c e^{|m|}$. Using \eqref{eq:vpq}, we conclude that
$$
\sum_{m\in \bZ^2} \sum_{p,q\in\bN}  V_{p,q}(m)
\ll_c \sum_{m\in \bZ^2} e^{-|m|/2}<\infty.
$$
Since
$$
|\cF_\Omega|\ll_c (\ln T)^2\qand 
|\cF_\Omega\cap (m+\cF_\Omega)|\sim (\ln T)^2\;\;\hbox{as $T\to\infty$},
$$
it follows from the Dominated Convergence Theorem that 
\begin{align*}
(\ln T)^{-2} \sum_{n,k\in\cF_\Omega} \sum_{p,q\in\bN} V_{p,q}(n-k)
\longrightarrow \sum_{m\in \bZ^2} {\sum}_{p,q\in\bN}  V_{p,q}(m)\quad\hbox{as $T\to\infty$},
\end{align*}
which completes the proof of the lemma.
\end{proof}

Combining Lemmas \ref{lem:variance} and \ref{lem:variancepositive}, we obtain the following

\begin{prop}\label{th:variance}
Under the assumptions \eqref{eq:var_cond} and \eqref{eq:var_cond333},
$$
V_\Omega^{-1}\cdot \int_{Y}\Phi_\Omega^{2}\, d\nu \to 
\zeta(3)^{-1} \sum_{m\in\mb Z^2} \sum_{p,q\in\bN} V_{p,q}(m)\quad\hbox{ as $T\to\infty$.}
$$
\end{prop}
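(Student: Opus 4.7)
The statement is an immediate consequence of combining the two preceding lemmas, so my plan is essentially to verify that their hypotheses are compatible and that the constants $B, B_0, B_1$ defining $\cF_\Omega^\triangledown$ can be chosen consistently. No new analytic work is required beyond what has already been established.

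First, I would invoke Lemma \ref{lem:variance}: under the standing hypothesis \eqref{eq:var_cond}, it produces constants $B, B_0, B_1 > 0$ (depending only on $K_0, K_1, K_2$) such that
\[
\int_{Y}\Phi_\Omega^{2}\, d\nu \;=\; \zeta(3)^{-1}\sum_{(n,k)\in\cF_\Omega^\triangledown(B,B_0,B_1)}\sum_{p,q\in\bN} V_{p,q}(n,k) \;+\; o(V_\Omega).
\]
Since \eqref{eq:var_cond333} forces $a \geq (\ln T)^{-K_4}$, the hypothesis \eqref{eq:var_cond} (with, say, $K_0 \geq K_4 + K_1 + 1$) is compatible with \eqref{eq:var_cond333}, so both lemmas apply simultaneously.

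Next, I would apply Lemma \ref{lem:variancepositive} with the same constants $B, B_0, B_1$ to the sum on the right-hand side. Under \eqref{eq:var_cond333}, this lemma gives
\[
V_\Omega^{-1} \cdot \sum_{(n,k)\in\cF_\Omega^\triangledown(B,B_0,B_1)}\sum_{p,q\in\bN} V_{p,q}(n,k) \;\longrightarrow\; \sum_{m\in\mb Z^2}\sum_{p,q\in\bN} V_{p,q}(m)
\]
as $T \to \infty$. Dividing the displayed equation from Lemma \ref{lem:variance} by $V_\Omega$ and passing to the limit then yields the claim, since $o(V_\Omega)/V_\Omega \to 0$.

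There is no real obstacle here; the only thing to check carefully is the logical matching of constants $(B, B_0, B_1)$ between the two lemmas, which is automatic because Lemma \ref{lem:variancepositive} is stated uniformly in these constants (the final sum $\sum_m \sum_{p,q} V_{p,q}(m)$ does not depend on them, and the passage to the limit in that lemma's proof only uses that $B, B_0, B_1$ are fixed positive constants). Thus the proof reduces to a one-line citation chain.
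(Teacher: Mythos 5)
Your proposal is correct and follows exactly the paper's own approach: the paper introduces Proposition~\ref{th:variance} with the phrase ``Combining Lemmas~\ref{lem:variance} and~\ref{lem:variancepositive}, we obtain the following,'' and gives no further argument, precisely because the combination is immediate once $B,B_0,B_1$ from Lemma~\ref{lem:variance} are used in Lemma~\ref{lem:variancepositive}. Your observation about the constants is the right thing to check, and your remark about compatibility of \eqref{eq:var_cond} and \eqref{eq:var_cond333} is harmless but unnecessary, since both are assumed as hypotheses of the proposition.
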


\section{Estimating higher-order cumulants}
\label{sec:higherord}

The goal of this section is to analyze the higher moments 
or equivalently cumulants of the functions $\Phi_\Omega$ defines in \eqref{eq:sums}. 
Before stating the main result, let us first introduce relevant notations.
Given bounded measurable functions $\phi_i:X\to \mb R$, $i=1,\dotsc,r$,
the cumulant with respect to the measure $\nu$ is defined as
\begin{equation} \label{eq:defcum}    
\textup{Cum}(\phi_{1},\dotsc,\phi_{r}):=\sum_{\mathcal{P}}(-1)^{|\mathcal{{P}}|-1}(|\mc{P}|-1)!\prod_{I\in\mc{P}}\int_{Y}{\prod}_{i\in I}\phi_{i}\,d\nu,
\end{equation}
where the sum ranges over all partitions $\mc{P}$ of the set $\{1,\dotsc,r\}$. Further,  for a single bounded function  $\phi$, one defines
$$
\textup{Cum}_r(\phi):=
\textup{Cum}(\phi,\dotsc,\phi).
$$
The main result is the following:

\begin{prop} \label{prop:higherlimit}
Assume that for some $K_0>0$ and $\theta>0$,
\begin{equation}\label{eq:high_cond}
a\le b/2,\quad (\ln T)^{-K_0}\leq \eps_T,\quad
L_T\le V_{\Omega}^{1/2},\quad 
b\ge (\ln T)^{-2+\theta}.
\end{equation}
Then for all $r\geq 3$, 
$$
\textup{Cum}_{r}\left(\Phi_\Omega\right)=o(V_\Omega^{r/2})\quad\hbox{as $T\to\infty$.}
$$
\end{prop}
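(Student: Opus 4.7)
The starting point is multilinearity of the cumulant:
\begin{equation*}
\textup{Cum}_r(\Phi_\Omega)=\sum_{(n_1,\ldots,n_r)\in\cF_\Omega^r}\textup{Cum}(\phi_{n_1},\ldots,\phi_{n_r}),\qquad \phi_n:=\varphi_{\Omega,n}\circ a(n).
\end{equation*}
The random variables $\phi_n$ are centered on $Y$ by construction, satisfy $\|\phi_n\|_\infty\ll L_T$ by \eqref{eq:phi_uniform}, satisfy $\|\phi_n\|^2_{L^2(\nu)}\ll b\,(\ln\ln T)^{O(1)}$ in the bulk via Proposition \ref{prop:L2} and the hypotheses $a\le b/2$, $(\ln T)^{-K_0}\le\eps_T$, and obey the Sobolev estimate $\|\phi_n\|_{C^\ell}\ll\eps_T^{-\sigma_\ell}L_T$ from \eqref{eq:norm_bound}. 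The plan is to introduce a proximity threshold $D:=D_r\ln\ln T$ with $D_r$ large, build the graph on $\{1,\dots,r\}$ with edges $i\sim j$ iff $|n_i-n_j|<D$, and let $\cP_0(\mathbf n)$ denote its connected-component partition. I then split the sum over $\mathbf n$ according to $|\cP_0(\mathbf n)|\ge 2$ or $|\cP_0(\mathbf n)|=1$.

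Decorrelated regime, $|\cP_0|\ge 2$. Here every pair of distinct clusters lies at mutual distance at least $D$. The classical principle is that the joint cumulant of a family of random variables which splits into $\ge 2$ independent subfamilies is identically zero; in our setting this holds up to the quantitative decorrelation in Theorem \ref{prop:integrability}. More precisely, for each partition $\cP$ appearing in the cumulant formula \eqref{eq:defcum}, I would use Theorem \ref{prop:integrability} to replace each moment $\int_Y\prod_{i\in I}\phi_{n_i}\,d\nu$ by its $\cP_0$-factorized version, with error $O\big(e^{-\eta_r D}\prod_{i\in I}\|\phi_{n_i}\|_{C^\ell}\big)$. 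The factorized sum over $\cP$ reassembles to the cumulant of a fully independent product with at least two nontrivial blocks, hence vanishes; only the decorrelation errors remain. Using $\|\phi_n\|_{C^\ell}\ll(\ln T)^{K_0\sigma_\ell}L_T\le(\ln T)^{K_0\sigma_\ell}V_\Omega^{1/2}$ and the trivial count $|\cF_\Omega|^r\ll(\ln T)^{2r}$, the whole contribution is $\ll(\ln T)^{C(r,K_0)}\,e^{-\eta_r D_r\ln\ln T}\cdot V_\Omega^{r/2}=o(V_\Omega^{r/2})$ for $D_r$ sufficiently large.

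Clustered regime, $|\cP_0|=1$. All $r$ indices now lie within a ball of diameter at most $(r-1)D$, so the number of such tuples is bounded by $|\cF_\Omega|\cdot((r-1)D)^{2(r-1)}\ll(\ln T)^2(\ln\ln T)^{2(r-1)}$. For each such tuple, every singleton block in the expansion \eqref{eq:defcum} vanishes (since $\int_Y\phi_n\,d\nu=0$), leaving only partitions whose blocks all have size $\ge 2$. For each such block $I$ of size $k$, Hölder's inequality applied with the $L^\infty$-bound on $k-2$ factors and the $L^2(\nu)$-bound on two factors yields $\big|\int_Y\prod_{i\in I}\phi_{n_i}\,d\nu\big|\ll L_T^{k-2}\,b\,(\ln\ln T)^{O(1)}$. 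Multiplying over the blocks of a partition with $|\cP|=p$ gives a per-tuple bound $\ll L_T^{r-2p}b^p(\ln\ln T)^{O(r)}$; combined with the tuple count, $L_T\le V_\Omega^{1/2}$, $V_\Omega\asymp(\ln T)^2 b$, and the standing hypothesis $b\ge(\ln T)^{-2+\theta}$, this contribution is reduced to $o(V_\Omega^{r/2})$.

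The main obstacle is that the clustered-regime Hölder estimate above is only marginally sufficient: the losses $(\ln\ln T)^{O(r)}$ from the proximity count and from Proposition \ref{prop:L2} must be absorbed by the polynomial gain coming from $\theta>0$ in $b\ge(\ln T)^{-2+\theta}$ and from the strict inequality $L_T\le V_\Omega^{1/2}$. In borderline configurations (e.g.\ $L_T$ close to $V_\Omega^{1/2}$), the argument may require a further sub-clustering inside the single cluster together with the sharper restricted $L^p$-bounds of Propositions \ref{prop:alphaintoverY}--\ref{prop:alphaintoverX} in order to replace $L_T^{k-2}$ by $L_T^{k-2-\delta}$ for some $\delta>0$, producing the genuine polynomial saving that closes the gap.
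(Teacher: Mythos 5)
Your overall strategy (multilinearity, centering kills singleton blocks, split into decorrelated vs.\ clustered tuples) is the right one, but the single-threshold proximity graph does not actually close the decorrelated regime, and this is not a technicality.

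\textbf{The single-scale decomposition fails in the decorrelated regime.} To apply Theorem~\ref{prop:integrability} to a block factorization, one must first group the factors inside each connected component $J$ of $\cP_0(\mathbf n)$ into a single smooth function $F_{I,J}:=\prod_{i\in I\cap J}\varphi_{\Omega,n_i}\circ a(n_i-s_J)$ anchored at some $s_J\in J$. The $C^\ell$-norm of this composite grows like $e^{\theta_\ell\,\mathrm{diam}(J)}\prod_i\|\varphi_{\Omega,n_i}\|_{C^\ell}$ for some fixed $\theta_\ell>0$ (this is the bound in \eqref{eq:norm_bound} combined with the fact that conjugation by $a(m)$ costs $e^{\theta_\ell|m|}$ in the $C^\ell$-norm). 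With a single threshold $D$, a connected component can be a chain of length up to $(r-1)D$, so the total decorrelation error you would get is of order
$e^{-\eta_r D}\cdot e^{\theta_\ell\,r(r-1)D}\prod_i\|\varphi_{\Omega,n_i}\|_{C^\ell}$. Since $\theta_\ell$ and $\eta_r$ are fixed constants with no control on their ratio, the exponent can have the wrong sign no matter how large $D_r$ is, and the error does not decay. This is precisely why the paper uses the multi-scale decomposition from \cite[Prop.~6.2]{BGmult}: it covers $(\bN_o^2)^r$ by the fully-clustered piece $\Delta(\beta_r)$ and pieces $\Delta_{\cQ}(3\beta_i,\beta_{i+1})$ in which within-atom diameters are $\le 3\beta_i$ while between-atom gaps exceed $\beta_{i+1}$, and the parameters $M_i$ are then chosen \emph{recursively} so that $r\theta_\ell M_i - \eta_r M_{i+1}$ is arbitrarily negative at every scale (see the choice of $M_i$ just before the end of Section~\ref{sec:higherord}, feeding into the error $(\ln T)^{r(K_0\sigma_\ell+1)+r\theta_\ell M-\eta_r M'}$ from Lemma~\ref{lem:sparse}). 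A fixed diameter-to-gap ratio of $(r-1)$ cannot substitute for this.

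\textbf{The clustered regime is also not closed as written.} Your H\"older bound $\big|\int_Y\prod_{i\in I}\phi_{n_i}\,d\nu\big|\ll L_T^{k-2}b\,(\ln\ln T)^{O(1)}$, applied to a single block of size $r$ (the partition $p=1$, which is admissible once singletons are excluded and $r\ge 3$) and multiplied by the tuple count $\ll(\ln T)^2(\ln\ln T)^{O(r)}$, yields $V_\Omega^{r/2}(\ln\ln T)^{O(r)}$ after substituting $L_T\le V_\Omega^{1/2}$ and $V_\Omega\asymp(\ln T)^2 b$ --- the $\ln\ln T$ losses land on the \emph{wrong} side, so this is not $o(V_\Omega^{r/2})$. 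You flag this, but the suggested remedy ``replace $L_T^{k-2}$ by $L_T^{k-2-\delta}$'' is not what happens in the paper's Lemma~\ref{lem:cluster}: there the clustered tuples are further split into three sub-cases according to whether $\lfloor n_1\rfloor$ and $|n_1|$ exceed suitable multiples of $\ln\ln T$, and in each sub-case one transfers via Theorem~\ref{prop:ED} to integration against $\mu$, $\nu_1$ or $\nu_2$, where the restricted moment estimates (Propositions~\ref{prop:alphaintoverX}, \ref{prop:alphaintoverY}, \ref{prop:L1}) give per-block bounds $L_T^{|I|-3+\delta}b^{1-\delta/3}$, $L_T^{|I|-2+\delta}b^{1-\delta/2}$, $L_T^{|I|-1}b$ respectively, each paired with a correspondingly smaller tuple count $(\ln T)^2(\ln\ln T)^{2r-2}$, $(\ln T)(\ln\ln T)^{2r-1}$, $(\ln\ln T)^{2r}$. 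It is this trade-off between the per-block moment estimate and the admissible tuple count across the three sub-regimes --- not just a $\delta$-improvement of the $L^\infty$ power --- that produces the polynomial saving.
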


Let $\mc{Q}$ be a partition of $\{1,\dotsc,r\}$.
For $i,j\in\{1,\dotsc,r\}$ we write $i\sim_{\mc{Q}} j$ to indicate that $i$ and $j$ lie in the same atom of the partition $\mc{Q}$. 
For $0\le \alpha<\beta$, we define
\begin{align*}
\Delta_{\mc{Q}}(\alpha,\beta)&:=\left\{(n_{1},\dotsc,n_{r})\in (\mb{N}_o^{2})^{r}: 
\begin{tabular}{ll} 
$|n_{i}-n_{j}|\le \alpha$ & if $i\sim_{\mc{Q}}j$,\\
$|n_{i}-n_{j}|>\beta$ & if $i\not\sim_{\mc{Q}}j$
\end{tabular}
\right\},\\
\Delta(\alpha) &:=\left\{(n_{1},\dotsc,n_{r})\in (\mb{N}_o^{2})^{r}: | n_{i}-n_{j}|\le \alpha\mbox{ for all }i,j\right\}.
\end{align*}
Let us choose parameters
$$
0=\beta_0<\beta_1<3\beta_1<\beta_2<\cdots<\beta_{r-1}<3\beta_{r-1}<\beta_r.
$$
Then according to \cite[Prop. 6.2]{BGmult}, we have the following decomposition:
$$
(\mb{N}_o^{2})^{r}=\Delta(\beta_r)\cup\bigcup_{i=0}^{r-1}\bigcup_{|\mc{Q}|\geq 2}\Delta_{\mc{Q}}(3\beta_i,\beta_{i+1}),
$$
where the union is taken over all non-trivial partitions $\mc{Q}$ of 
$\{1,\ldots,r\}$. Using this decomposition, we obtain that
\begin{align}\label{eq:decomp0}
\textup{Cum}_{r}\left(\Phi_\Omega\right)
=&\sum_{({n}_{1},\dotsc,{n}_{r})\in \cF_\Omega^r}\textup{Cum}\big(\varphi_{\Omega,{n}_1}\circ a(n_1),\dotsc,\varphi_{\Omega,{n}_r}\circ a(n_r)\big)\\
\le &\sum_{({n}_{1},\dotsc,{n}_{r})\in \cF_\Omega^r\cap \Delta(\beta_r)} \left|\textup{Cum}\big(\varphi_{\Omega,{n}_1}\circ a(n_1),\dotsc,\varphi_{\Omega,{n}_r}\circ a(n_r)\big)\right|
 \nonumber \\
&+\sum_{j=0}^{r-1}\sum_{|\mc{Q}|\ge 2}\sum_{({n}_{1},\dotsc,{n}_{r})\in \cF_\Omega^r\cap \Delta_{\mc{Q}}(3\beta_j,\beta_{j+1})}\left|\textup{Cum}\big(\varphi_{\Omega,{n}_1}\circ a(n_1),\dotsc,\varphi_{\Omega,{n}_r}\circ a(n_r)\big)\right|. \nonumber
\end{align}
We analyze each of the sums separately in the following two lemmas.

\begin{lem}
\label{lem:cluster}
Under the assumptions \eqref{eq:high_cond}, 
when $\beta_r\ll \ln\ln T$,
\begin{align*}
\sum_{({n}_{1},\dotsc,{n}_{r})\in \cF_\Omega^{r}\cap\Delta(\beta_r)}
\Big|\textup{Cum}\big(\varphi_{\Omega,{n}_1}\circ a( n_1),\dotsc,\varphi_{\Omega,{n}_r}\circ a( n_r)\big)\Big|
=o\left(V_\Omega^{r/2}\right)\quad\hbox{as $T\to\infty$.}
\end{align*}
\end{lem}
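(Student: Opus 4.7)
The plan is to estimate the cluster sum via a direct cumulant bound built from three ingredients: a count of cluster tuples, a cumulant inequality for centered bounded random variables, and the $L^2$-estimate of Proposition~\ref{prop:L2}.

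First I would bound the number of tuples. A tuple $(n_1,\ldots,n_r)\in\cF_\Omega^r\cap\Delta(\beta_r)$ is determined by $n_1\in\cF_\Omega$ together with the $r-1$ offsets $n_i-n_1$, each of max-norm at most $\beta_r$. By \eqref{eq:cardQRT} we have $|\cF_\Omega|\ll(\ln T)^2$, and since $\beta_r\ll\ln\ln T$ the total count is $\ll(\ln T)^2(\ln\ln T)^{2(r-1)}$.

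Next I would bound each individual cumulant. Since each $\varphi_{\Omega,n_i}\circ a(n_i)$ is $\nu$-centered by \eqref{def_varphiTn}, any partition $\cP$ in the expansion \eqref{eq:defcum} containing a singleton atom contributes zero, so only partitions with all atoms of size $\geq 2$ (hence $|\cP|\le r/2$) survive. For each atom $I$ of size $m$, I combine the Cauchy--Schwarz inequality with the uniform bound $|\varphi_{\Omega,n}|\ll L_T$ from \eqref{eq:phi_uniform}:
\[
\Bigl|\int_Y\prod_{i\in I}\varphi_{\Omega,n_i}\circ a(n_i)\,d\nu\Bigr|\ll L_T^{m-2}\,\|\varphi_{\Omega,n_{i_1}}\circ a(n_{i_1})\|_{L^2(\nu)}\,\|\varphi_{\Omega,n_{i_2}}\circ a(n_{i_2})\|_{L^2(\nu)}.
\]
Proposition~\ref{prop:L2} yields $\|\varphi_{\Omega,n}\circ a(n)\|_{L^2(\nu)}^2\ll b(\ln\ln T)^{O(1)}\max(1,|n|e^{-\lfloor n\rfloor})$; taking the product over atoms and noting that the assumption $L_T\ge b^{1/2}$ (implicit in $L_T\ge b^{-1/3}(\ln T)^\theta$) forces the $|\cP|=1$ term to dominate, I obtain the bound
\[
\bigl|\textup{Cum}(\varphi_{\Omega,n_1}\circ a(n_1),\ldots,\varphi_{\Omega,n_r}\circ a(n_r))\bigr|\ll_r L_T^{r-2}\,b\,(\ln\ln T)^{O(1)}\max(1,|n_1|e^{-\lfloor n_1\rfloor}),
\]
where I have used that within a cluster $|n_i|$ and $\lfloor n_i\rfloor$ agree with $|n_1|$ and $\lfloor n_1\rfloor$ up to $O(\ln\ln T)$.

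Finally I would sum over the cluster. Splitting $\cF_\Omega$ according to whether $\lfloor n_1\rfloor$ is larger or smaller than $\ln|n_1|$, one gets the elementary estimate $\sum_{n_1\in\cF_\Omega}\max(1,|n_1|e^{-\lfloor n_1\rfloor})\ll(\ln T)^2$. Combining this with the cluster count and the cumulant bound,
\[
\sum_{(n_1,\ldots,n_r)\in\cF_\Omega^r\cap\Delta(\beta_r)}|\textup{Cum}|\ll_r L_T^{r-2}\,b\,(\ln T)^2\,(\ln\ln T)^{O(r)}.
\]
Using $L_T\le V_\Omega^{1/2}$ and $V_\Omega\gg(\ln T)^2 b$, the right-hand side is at most $V_\Omega^{r/2}(\ln\ln T)^{O(r)}$, and the strict inequality $b\ge(\ln T)^{-2+\theta}$ provides the polynomial-in-$\ln T$ saving required to pass from $O(V_\Omega^{r/2})$ to $o(V_\Omega^{r/2})$.

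The main obstacle is precisely this passage from $O$ to $o$. The trivial cumulant bound is essentially tight and saturates at $O(V_\Omega^{r/2})$ up to $(\ln\ln T)^{O(r)}$ factors, so extracting a genuine $o(1)$ decay requires carefully exploiting the strict inequalities in the hypotheses (the gap between $L_T$ and $V_\Omega^{1/2}$, and the $\theta$-slack in $b\ge(\ln T)^{-2+\theta}$). When this gap is too small, a refinement would be needed, for example splitting the cluster by $\lfloor n_1\rfloor$ and, in the region $\lfloor n_1\rfloor\gg\ln\ln T$, using Theorem~\ref{prop:ED} to replace $\int_Y$ by $\int_X$ and invoking the sharper bound $\|\widehat f_{\Omega,n}\|_{L^p(\mu)}\ll b^{1/3}$ from Proposition~\ref{prop:alphaintoverX}.
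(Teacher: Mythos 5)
Your count of tuples and your observation that centered random variables kill the singleton atoms are both fine, but the core estimate does not close: it gives $O(V_\Omega^{r/2}(\ln\ln T)^{O(r)})$, which is \emph{worse}, not better, than $V_\Omega^{r/2}$. The culprit is the Cauchy--Schwarz step $\bigl|\int_Y\prod_{i\in I}\varphi_{\Omega,n_i}\circ a(n_i)\,d\nu\bigr|\ll L_T^{|I|-2}\|\cdot\|_{L^2(\nu)}^2$, which yields $L_T^{r-2}$ overall. Under \eqref{eq:high_cond} the only constraint on $L_T$ is $L_T\le V_\Omega^{1/2}$, and the paper actually takes $L_T=V_\Omega^{1/2}$ in Section~\ref{sec:compofp}. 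With that choice and $V_\Omega\asymp(\ln T)^2b$, your final bound $L_T^{r-2}\,b\,(\ln T)^2(\ln\ln T)^{O(r)}$ is exactly $V_\Omega^{r/2}(\ln\ln T)^{O(r)}$, and the slack $b\ge(\ln T)^{-2+\theta}$ does nothing to offset the $(\ln\ln T)^{O(r)}$ factor because there is no unused power of $V_\Omega$ left over. You do not get $o(V_\Omega^{r/2})$.

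The refinement you flag in your last paragraph is not an optional upgrade; it is the mechanism of the actual proof, and it must be combined with the splitting of $\cF_\Omega$ into regimes. The paper separates the cluster into three zones according to $\lfloor n_1\rfloor$ and $|n_1|$. In the bulk zone ($\lfloor n_1\rfloor\ge B_1\ln\ln T$, which contributes $\asymp(\ln T)^2(\ln\ln T)^{2r-2}$ tuples) it uses Theorem~\ref{prop:ED} to replace $\int_Y$ by $\int_X$ and then H\"older with \emph{three} functions and Proposition~\ref{prop:alphaintoverX}, giving the per-cumulant bound $L_T^{r-3+r\delta}b^{1-\delta/3}$. That extra factor $L_T^{-1}\sim V_\Omega^{-1/2}\ll(\ln T)^{-\theta/2}$ is precisely what supplies the polynomial-in-$\ln T$ decay. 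In the intermediate zone ($|n_1|$ large but $\lfloor n_1\rfloor$ small, $\asymp(\ln T)(\ln\ln T)^{2r-1}$ tuples) one passes to $Y_1$ or $Y_2$ and uses Proposition~\ref{prop:alphaintoverY} to get $L_T^{r-2+r\delta}b^{1-\delta/2}$, and the smaller tuple count closes the estimate. Your $L^2(\nu)$ bound via Proposition~\ref{prop:L2} (which is only applied in the paper when $|n_1|$ is small, where there are merely $(\ln\ln T)^{O(r)}$ tuples) is not sharp enough for the bulk zone; moreover it carries a $\max(1,\ln(b/a))$ factor that is not controlled under \eqref{eq:high_cond} alone, since no lower bound on $a$ is assumed there. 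So as written your argument has a genuine gap: the dominant contribution is not treated sharply enough, and the passage from $O$ to $o$ does not follow.
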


\begin{proof}
Let us set
$$
\phi_{\Omega,n}:=\widehat f_{\Omega,n}\cdot \eta_{L_T}.
$$
We note that by \eqref{eq:f_uniform} and \eqref{etaL},
\begin{equation}\label{eq:LTT}
\phi_{\Omega,n}\ll_c L_T.
\end{equation}
We recall (cf. \eqref{def_varphiTn}) that $\phi_{\Omega,n}$ differs from
$\varphi_{\Omega,n}$ by a constant. Therefore, it follows from 
the properties of cumulants that
$$
\textup{Cum}\big(\varphi_{\Omega,{n}_1}\circ a( n_1),\dotsc,\varphi_{\Omega,{n}_r}\circ a( n_r)\big)
=
\textup{Cum}\big(\phi_{\Omega,{n}_1}\circ a( n_1),\dotsc,\phi_{\Omega,{n}_r}\circ a( n_r)\big).
$$
Now it remains to estimate the products 
$\prod_{I\in\mc{Q}}\int_{Y}\prod_{i\in I}\phi_{\Omega,{n}_{i}}\circ a(n_i)\,d\nu$
for partitions $\mc{Q}$ of $\{1,\ldots, r\}$.
Since $({n}_{1},\dotsc,{n}_{r})\in \Delta(\beta_r)$, for some $K_1>0$, 
\begin{equation}\label{eq:group}
|n_i-n_{j}|\le K_1\, \ln\ln T\quad\hbox{for all $i,j$.}
\end{equation}
We distinguish several possibilities.  
First, we analyze the case when
\begin{equation}\label{ass:1}
\lfloor n_{1}\rfloor\geq B_1\,\ln\ln T
\end{equation}
for a constant $B_1$ to be specified later.
We pick $i_0\in I$ and write
$$
F:=\prod_{i\in I}\phi_{\Omega,{n}_{i}}\circ a(n_i-n_{i_0}).
$$
From \eqref{ass:1} and \eqref{eq:group},
\begin{equation}\label{eq:ass1}
\lfloor n_{i}\rfloor\geq (B_1-K_1)\,\ln\ln T\quad\hbox{for all $i$.}
\end{equation}
Also in view of \eqref{eq:group}, it follows from \eqref{eq:norm_bound} and our assumptions on $L_T$ and $\varepsilon_T$ that 
$$
\|F\|_{C^\ell}\ll \prod_{i\in I}\big\|\phi_{\Omega,{n}_{i}}\circ a(n_i-n_{i_0})\big\|_{C^\ell}\ll (\ln T)^{K_2} 
$$
for some $K_2>0$ depending on $K_0$ and $K_1$. Therefore, it follows from Theorem \ref{prop:ED}
that
\begin{align}\label{eq:limm}
\int_{Y}\prod_{i\in I}\phi_{\Omega,{n}_{i}}\circ a(n_i)\,d\nu
&= \int_{Y}F\circ a(n_{i_0})\,d\nu=\int_X F\, d\mu+
O\left(e^{-\gamma_1\lfloor n_{i_0} \rfloor}\|F\|_{C^\ell}\right)\\
&=\int_{X}\prod_{i\in I}\phi_{\Omega,{n}_{i}}\circ a(n_i)\,d\mu
+O\left((\ln T)^{K_2-\gamma_1 (B_1-K_1)}\right)\nonumber
\end{align}
since $\mu$ is $a(n)$-invariant.
We choose $B_1$ sufficiently large, so that the second term is negligible.
When $|I|\ge 3$, we pick $J\subset I$ with $|J|=3$.
Applying \eqref{eq:LTT} the H\"older inequality,
we conclude that
\begin{align*}
\int_{X}\prod_{i\in I}\phi_{\Omega,{n}_{i}}\circ a(n_i)\,d\mu
\le L_T^{|I|-3}\int_X \prod_{i\in J}\phi_{\Omega,{n}_{i}}\circ a(n_i)\,d\mu
\le L_T^{|I|-3}\prod_{i\in J} \|\phi_{\Omega,{n}_{i}}\|_{L^3(\mu)}.
\end{align*}
By \eqref{eq:LTT} and Proposition \ref{prop:alphaintoverX}, for any $\delta>0$,
$$
\|\phi_{\Omega,{n}_{i}}\|_{L^3(\mu)}\le L_T^{\delta/3} 
\left(\int_X |\widehat f_{\Omega,{n}_{i}}|^{3-\delta}\,d\mu\right)^{1/3}
\ll_{c,\delta}  L_T^{\delta/3}\cdot b^{(3-\delta)/9}.
$$
Therefore,
\begin{align*}
\int_{X}\prod_{i\in I}\phi_{\Omega,{n}_{i}}\circ a(n_i)\,d\mu
\ll_{c,\delta} L_T^{|I|-3+\delta}\cdot b^{1-\delta/3}.
\end{align*}
We take $B_1$ sufficiently large, so that this bound dominates in \eqref{eq:limm}.
Ultimately, we conclude that 
$$
\int_{Y}\prod_{i\in I}\phi_{\Omega,{n}_{i}}\circ a(n_i)\,d\nu
\ll_{c,\delta} L_T^{|I|-3+\delta}\cdot b^{1-\delta/3}.
$$
The case when $|I|\le 2$ can be handled similarly. Here
one even gets a better bound
$$
\int_{X}\prod_{i\in I}\phi_{\Omega,{n}_{i}}\circ a(n_i)\,d\mu
\ll_c b
$$
from Lemma \ref{l:f_l1_l2}. We conclude that for all partitions $\mc{Q}$ of $\{1,\ldots,r\}$,
$$
\prod_{I\in\mc{Q}}\int_{Y}\prod_{i\in I}\phi_{\Omega,{n}_{i}}\circ a(n_i)\,d\nu
\ll_{c,\delta} L_T^{r-3+r\delta}\cdot b^{1-\delta/3},
$$
which also implies that
$$
\Big|\textup{Cum}\big(\phi_{\Omega,{n}_1}\circ a( n_1),\dotsc,\phi_{\Omega,{n}_r}\circ a( n_r)\big)\Big|
\ll_{c,\delta} L_T^{r-3+r\delta}\cdot b^{1-\delta/3}.
$$
We also note that the number of summands with the additional condition
\eqref{eq:group} is $O_{c,K_1}\big((\ln T)^2(\ln\ln T)^{2r-2}\big)$.

Now we consider the case
\begin{equation}\label{ass:2}
\lfloor n_{1}\rfloor< B_1\,\ln\ln T\qand |n_1|\ge B_2\, \ln\ln T
\end{equation}
for a constant $B_2>B_1$ to be specified later.
Without loss of generality, let us assume that $n_{1,1}\ge n_{1,2}$ since
the other case can be handled similarly. Then
$$
n_{1,2}< B_1\,\ln\ln T\qand n_{1,1}\ge B_2\, \ln\ln T.
$$
In fact, it follows from \eqref{eq:group} that
\begin{equation}\label{ass:2_1}
n_{i,2}< (B_1+K_1)\,\ln\ln T\qand n_{i,1}\ge (B_2-K_1)\, \ln\ln T\quad\hbox{for all $i$.}
\end{equation}
We pick $i_0\in I$ and write
$$
F:={\prod}_{i\in I}\phi_{\Omega,{n}_{i}}\circ a\big(n_i-(n_{i_0,1},0)\big).
$$
Then it follows from \eqref{eq:group} and \eqref{ass:2_1} that
$$
|n_i-(n_{i_0,1},0)|\le (2K_1+B_1)\, \ln\ln T\quad\hbox{for all $i$}.
$$
Therefore, as in the first case,
$\|F\|_{C^\ell}\ll (\ln T)^{K'_2}$
for some $K'_2>0$ depending on $K_0$, $K_1$, and $B_1$. Therefore,
applying Theorem \ref{prop:ED}, we deduce that
\begin{align}\label{eq:limm2}
\int_{Y}\prod_{i\in I}\phi_{\Omega,{n}_{i}}\circ a(n_i)\,d\nu
&= \int_{Y}F\circ a(n_{i_0,1},0)\,d\nu=\int_{Y_1} F\, d\nu_1+
O\left(e^{-\gamma_1 n_{i_0,1}}\|F\|_{C^\ell}\right)\\
&=\int_{Y_1}\prod_{i\in I}\phi_{\Omega,{n}_{i}}\circ a(n_i)\,d\nu_1
+O\left((\ln T)^{K_2'-\gamma_1 (B_2-K_1)}\right)\nonumber
\end{align}
since $\nu_1$ is $a(*,0)$-invariant. We choose $B_2$ sufficiently large,
so that the second term is negligible.
Due to \eqref{eq:LTT} and Proposition \ref{prop:alphaintoverY}, we also have the bound
with any $\delta>0$, 
\begin{align*}
\|\phi_{\Omega,{n}_{i}}\circ a(n_i)\|_{L^2(\nu_1)} &\le L_T^{\delta/2} 
\left(\int_{Y_1} |\widehat f_{\Omega,{n}_{i}}\circ a(n_i)|^{2-\delta}\,d\nu_1\right)^{1/2}
\ll_{c,\delta} L_T^{\delta/2}\cdot b^{(2-\delta)/4}.
\end{align*}
Therefore, as in the previous case, we get
$$
\int_{Y_1}\prod_{i\in I}\phi_{\Omega,{n}_{i}}\circ a(n_i)\,d\nu_1\ll_{c,\delta}
L_T^{|I|-2+\delta}\cdot b^{1-\delta/2}
$$
when $|I|\ge 2$.
Taking $B_2$ sufficiently large, we ensure that this bound dominates  in \eqref{eq:limm2}.
Thus, we derive that 
$$
\int_{Y_1}\prod_{i\in I}\phi_{\Omega,{n}_{i}}\circ a(n_i)\,d\nu_1
\ll_{c,\delta} L_T^{|I|-2+\delta}\cdot b^{1-\delta/2}.
$$
When $|I|=1$, we get a better bound $O_c(b)$ directly from Proposition \ref{prop:L1}.
Therefore, for all partitions $\mc{Q}$ of $\{1,\ldots,r\}$,
$$
\prod_{I\in\mc{Q}}\int_{Y}\prod_{i\in I}\phi_{\Omega,{n}_{i}}\circ a(n_i)\,d\nu
\ll_{c,\delta} L_T^{r-2+r\delta}\cdot b^{1-\delta/2},
$$
and
$$
\Big|\textup{Cum}\big(\phi_{\Omega,{n}_1}\circ a( n_1),\dotsc,\phi_{\Omega,{n}_r}\circ a( n_r)\big)\Big|
\ll_{c,\delta} L_T^{r-2+r\delta}\cdot b^{1-\delta/2}.
$$
We also note that the number of summands with the additional conditions \eqref{eq:group}
and \eqref{ass:2} is $O_{B_1,K_1}\big((\ln T)(\ln\ln T)^{2r-1}\big)$.

Finally, we consider the case
\begin{equation}\label{ass:3}
|n_1|< B_2\, \ln\ln T.
\end{equation}
For a non-empty subset $I$ of $\{1,\ldots,r\}$ and $i_0\in I$,
it follows from Proposition \ref{prop:L1},
\begin{align*}
\int_{Y}\prod_{i\in I}\phi_{\Omega,{n}_{i}}\circ a(n_i)\,d\nu
\le L_T^{|I|-1}\int_X \phi_{\Omega,{n}_{i_0}}\circ a(n_{i_0})\,d\nu\ll_c
L_T^{|I|-1}\cdot b.
\end{align*}
Then for all partitions $\mc{Q}$ of $\{1,\ldots,r\}$,
$$
\prod_{I\in\mc{Q}}\int_{Y}\prod_{i\in I}\phi_{\Omega,{n}_{i}}\circ a(n_i)\,d\nu
\ll_c L_T^{r-1}\cdot b,
$$
and
$$
\Big|\textup{Cum}\big(\phi_{\Omega,{n}_1}\circ a( n_1),\dotsc,\phi_{\Omega,{n}_r}\circ a( n_r)\big)\Big|
\ll_c L_T^{r-1}\cdot b.
$$
Here the number of summands satisfying \eqref{eq:group} and 
\eqref{ass:3} is $O_{B_2,K_1}\big((\ln\ln T)^{2r}\big)$.

Now combine the estimates obtained in the above three cases to get for all $\delta>0$,
\begin{align*}
&\sum_{({n}_{1},\dotsc,{n}_{r})\in \cF_\Omega^{r}\cap\Delta(\beta_r)}
\Big|\textup{Cum}\big(\varphi_{\Omega,{n}_1}\circ a( n_1),\dotsc,\varphi_{\Omega,{n}_r}\circ a( n_r)\big)\Big| \\
\ll_{c,\delta}  &\;
L_T^{r-3+r\delta} b^{1-\delta/3} (\ln T)^2(\ln\ln T)^{2r-2}\\
&+
L_T^{r-2+r\delta} b^{1-\delta/2} (\ln T)(\ln\ln T)^{2r-1}
+
L_T^{r-1} b (\ln\ln T)^{2r}.
\end{align*}
Using our assumptions on $L_T$ and $b$,
we check by direct computation that
taking $\delta>0$ sufficiently small, the above bound is $o(V_\Omega^{r/2})$ as $T\to \infty$.
\end{proof}

\begin{lem}
\label{lem:sparse}
Let $M,M'\ge 0$ and $\mc{Q}$ be a non-trivial partition 
of $\{1,\ldots,r\}$.
Then under the assumptions \eqref{eq:high_cond}, for all $(n_{1},\dotsc,n_{r})\in \Delta_{\mc{Q}}\big(M\, \ln\ln T,M'\, \ln\ln T\big)$,
$$
\Big|\textup{Cum}(\varphi_{\Omega,{n}_1}\circ a(n_1),\dotsc,\varphi_{\Omega,{n}_r}\circ a(n_r))\Big|\ll
(\ln T)^{r (K_0\sigma_\ell+1)+ r\theta_\ell M-\eta_rM'}.
$$
\end{lem}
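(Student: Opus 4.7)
The plan is to combine the decorrelation estimate of Theorem~\ref{prop:integrability}, applied in a grouped form along the atoms of $\mc{Q}$, with the classical algebraic fact that the joint cumulant of a random vector whose components split into independent blocks along a non-trivial partition vanishes identically.

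The first step is an approximate factorization of joint expectations along $\mc{Q}$. Fix $I\subset\{1,\ldots,r\}$, let $J_1,\ldots,J_s$ denote the atoms of $\mc{Q}$ that meet $I$, and pick a representative $i_{J_k}\in J_k$ for each $k$. Since the matrices $a(t)$ form a commuting diagonal flow, we may rewrite
$$\prod_{i\in I}\varphi_{\Omega,n_i}\circ a(n_i)=\prod_{k=1}^{s}\Phi_{J_k\cap I}\circ a(n_{i_{J_k}}),\qquad \Phi_{J_k\cap I}:=\prod_{i\in J_k\cap I}\varphi_{\Omega,n_i}\circ a(n_i-n_{i_{J_k}}).$$
The representatives of distinct atoms of $\mc{Q}$ are separated by more than $M'\ln\ln T$, so applying Theorem~\ref{prop:integrability} to the $s$ smooth compactly supported functions $\Phi_{J_k\cap I}$ with times $n_{i_{J_k}}$ yields
$$\int_{Y}\prod_{i\in I}\varphi_{\Omega,n_i}\circ a(n_i)\,d\nu=\prod_{k=1}^{s}\int_{Y}\prod_{i\in J_k\cap I}\varphi_{\Omega,n_i}\circ a(n_i)\,d\nu+E_I,$$
with $|E_I|\ll(\ln T)^{-\eta_r M'}\prod_k\|\Phi_{J_k\cap I}\|_{C^\ell}$. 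Using the standard bound $\|\varphi\circ a(t)\|_{C^\ell}\ll e^{\theta_\ell|t|}\|\varphi\|_{C^\ell}$, the condition $|n_i-n_{i_{J_k}}|\le M\ln\ln T$ within a single atom, and the estimate $\|\varphi_{\Omega,n_i}\|_{C^\ell}\ll(\ln T)^{K_0\sigma_\ell+1}$ from \eqref{eq:norm_bound} (combining $\varepsilon_T\ge(\ln T)^{-K_0}$ with $L_T\le V_\Omega^{1/2}\ll\ln T$), we obtain
$$|E_I|\ll(\ln T)^{|I|(\theta_\ell M+K_0\sigma_\ell+1)-\eta_r M'}.$$

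The second step is the cumulant vanishing. Substituting the approximate factorizations into \eqref{eq:defcum} and expanding each product $\prod_{I\in\mc{P}}(\textup{main}_I+E_I)$, the contribution of all ``main'' factors equals
$$\sum_{\mc{P}}(-1)^{|\mc{P}|-1}(|\mc{P}|-1)!\prod_{B\in\mc{P}\wedge\mc{Q}}\int_{Y}\prod_{i\in B}\varphi_{\Omega,n_i}\circ a(n_i)\,d\nu,$$
where $\mc{P}\wedge\mc{Q}$ denotes the common refinement. This is precisely the $r$-th joint cumulant of a random vector whose components split into independent blocks indexed by the non-trivial partition $\mc{Q}$, and it therefore vanishes by the classical Leonov--Shiryaev identity (proved by Möbius inversion on the partition lattice).

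It remains to bound the residual terms in the expansion. Any such term contains at least one factor $E_I$ contributing $(\ln T)^{-\eta_r M'}$, while the other factors $\textup{main}_{I'}$ are controlled in $L^\infty$ by $L_T^{|I'|}\ll(\ln T)^{|I'|}$ via \eqref{eq:phi_uniform}, producing a total factor $\ll(\ln T)^{r-|I|}$. Combining with the bound on $|E_I|$ above and using $|I|\le r$ in the exponent yields the claimed estimate $(\ln T)^{r(K_0\sigma_\ell+1)+r\theta_\ell M-\eta_r M'}$; terms involving two or more $E_I$ factors are smaller. The main technical obstacle is simply the clean bookkeeping of the combinatorial cancellation in the partition lattice, but this reduces to the standard cumulant-independence identity and no further ideas are required.
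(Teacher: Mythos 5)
Your proposal is correct and follows essentially the same route as the paper: you factorize the integrals $\int_Y\prod_{i\in I}\varphi_{\Omega,n_i}\circ a(n_i)\,d\nu$ along the atoms of $\mc{Q}$ via Theorem~\ref{prop:integrability} (grouping within each atom around a representative and controlling $C^\ell$-norms via $|n_i-n_{i_J}|\le M\ln\ln T$ and \eqref{eq:norm_bound}), note that the resulting main term is exactly the conditional cumulant $\textup{Cum}(\cdots\,|\,\mc{Q})$ which vanishes for a non-trivial $\mc{Q}$ (the paper cites \cite[Prop.~8.1]{BGmult} where you invoke the Leonov--Shiryaev/M\"obius-inversion fact — same content), and then bound the residual via one $E_I$ factor supplying $(\ln T)^{-\eta_r M'}$ times $L^\infty$ bounds $L_T^{|I'|}\ll(\ln T)^{|I'|}$ from \eqref{eq:phi_uniform}. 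The bookkeeping and the final exponent match the paper's.
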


\begin{proof}
For bounded measurable functions $\phi_1,\ldots ,\phi_r:X\to \bR$,
one defines the conditional cumulant with respect to a partition $\mc{Q}$ as
$$
\textup{Cum}(\phi_{1},\dotsc,\phi_{r}|\mc{Q}):=\sum_{\mathcal{P}}(-1)^{|\mathcal{{P}}|-1}(|\mc{P}|-1)!\prod_{I\in\mc{P}}\prod_{J\in\mc{Q}} \int_{X}{\prod}_{i\in I\cap J}\phi_{i}\,d\nu.
$$
Here the product over an empty set is understood to be $1$.
It is well-known (see, for instance, \cite[Proposition~8.1]{BGmult}) that
$$
\textup{Cum}(\phi_{1},\dotsc,\phi_{r}|\mc{Q})=0
$$
for every non-trivial partition $\mc{Q}$.
Therefore, it will be sufficient to check that 
$$
\textup{Cum}(\varphi_{\Omega,{n}_1}\circ a(n_1),\dotsc,\varphi_{\Omega,{n}_r}\circ a(n_r))\approx \textup{Cum}(\varphi_{\Omega,{n}_1}\circ a(n_1),\dotsc,\varphi_{\Omega,{n}_r}\circ a(n_r) | \mc{Q})
$$
with the above error term. In fact, we show that
for any partition $\cP$,
$$
\prod_{I\in\mc{P}}\int_{X}{\prod}_{i\in I}\varphi_{\Omega,n_i}\circ a(n_i)\,d\nu
\approx \prod_{I\in\mc{P}}\prod_{J\in\mc{Q}} \int_{X}{\prod}_{i\in I\cap J}\varphi_{\Omega,n_i}\circ a(n_i)\,d\nu.
$$
When $I\cap J\ne \emptyset$, we pick $i_J\in I\cap J$ and set 
$$
F_{I,J}:={\prod}_{i\in I\cap J}\varphi_{\Omega,{n}_{i}}\circ a(n_i- n_{i_{J}}).
$$
Since $(n_{1},\dotsc,n_{r})\in \Delta_{\mc{Q}}\big(M\, \ln\ln T,M'\, \ln\ln T\big)$,
$$
|n_i- n_{i_{J}}|\le M\ln\ln T,
$$
so that it follows from \eqref{eq:norm_bound} that for some $\theta_\ell>0$,
\begin{align*}
\|F_{I,J}\|_{C^\ell}&\ll_\ell{\prod}_{i\in I\cap J}\|\varphi_{\Omega,{n}_{i}}\circ a(n_i- n_{i_{J}})\|_{C^\ell}
\ll_\ell {\prod}_{i\in I\cap J}\|\varphi_{\Omega,{n}_{i}}\|_{C^\ell}
e^{\theta_\ell |n_i- n_{i_{J}}|}\\
&\ll_\ell  (\varepsilon_T^{-\sigma_\ell}L_T e^{\theta_\ell M\ln\ln T})^{|I\cap J|}
\le (\ln T)^{|I\cap J|(K_0\sigma_\ell+\theta_\ell M)} L_T^{|I\cap J|}.
\end{align*}
Applying Theorem \ref{prop:integrability}, we deduce that 
\begin{align}
\int_Y {\prod}_{i\in I}\varphi_{\Omega,n_i}\circ a(n_i)\, d\nu =&
\int_Y {\prod}_{J\in\mc{Q}} F_{I,J}\circ a(n_{i_J})  d\nu \nonumber \\
 = &{\prod}_{J\in\mc{Q}} \int_Y F_{I,J}\circ a(n_{i_J})  d\nu \nonumber \\
&\quad\quad+O_r\left (e^{-\eta_r \min_{J_1\ne J_2} |n_{i_{J_1}}-n_{i_{J_2}}|}  {\prod}_{J\in\mc{Q}} \|F_{I,J}\|_{C^\ell} \right) \nonumber\\
=& \prod_{J\in\mc{Q}} \int_{Y}{\prod}_{i\in I\cap J}\varphi_{\Omega,n_i}\circ a(n_i)\,d\nu \nonumber\\
&\quad\quad + O_{r} \Big((\ln T)^{|I|(K_0\sigma_\ell+\theta_\ell M)-\eta_rM'}L_T^{|I|} \Big).
\label{eq:corr1}
\end{align}
For subsets $I\subset \{1,\ldots,r\}$, we also have the bound
\begin{align}\label{eq:corr2}
\left|\int_Y {\prod}_{i\in I}\varphi_{\Omega,n_i}\circ a(n_i)\, d\nu\right|
\le L_T^{|I|},
\end{align}
which follows from \eqref{eq:phi_uniform}.
Since $L_T\le V_\Omega^{1/2}\ll \ln T$, 
combining \eqref{eq:corr1} and \eqref{eq:corr2}, we conclude that 
\begin{align*}
\prod_{I\in\mc{P}}\int_{X}{\prod}_{i\in I}\varphi_{\Omega,n_i}\circ a(n_i)\,d\nu
= & \prod_{I\in\mc{P}}\prod_{J\in\mc{Q}} \int_{X}{\prod}_{i\in I\cap J}\varphi_{\Omega,n_i}\circ a(n_i)\,d\nu\\
&\quad\quad\quad + O_{r} \Big((\ln T)^{r (K_0\sigma_\ell+1)+ r\theta_\ell M-\eta_rM'} \Big).
\end{align*}
This implies the  lemma.
\end{proof}

Now we complete the proof of Proposition \ref{prop:higherlimit} using the decomposition \eqref{eq:decomp0}. Let us pick constants 
$$
0=M_0<M_1<3M_1<M_2<\cdots<M_{r-1}<3 M_{r-1}<M_r
$$
and $\beta_i=M_i\, \ln\ln T$.
It follows from Lemma \ref{lem:sparse} that 
\begin{align*}
&\left|\sum_{({n}_{1},\dotsc,{n}_{r})\in \cF_\Omega^r\cap \Delta_{\mc{Q}}(3\beta_j,\beta_{j+1})}\textup{Cum}\big(\varphi_{\Omega,{n}_1}\circ a(n_1),\dotsc,\varphi_{\Omega,{n}_r}\circ a(n_r)\big)\right| \\
\ll_r & |\cF_\Omega|^r (\ln T)^{r (K_0\sigma_\ell+1)+ r\theta_\ell M_i-\eta_rM_{i+1}} \ll_c (\ln T)^{r (K_0\sigma_\ell+3)+ r\theta_\ell M_i-\eta_rM_{i+1}}.
\end{align*}
We pick the parameters $M_i$ recursively, so that 
$r (K_0\sigma_\ell+3)+ r\theta_\ell M_i-\eta_r M_{i+1}<0$ and $M_r=O_{K_0,\ell}(1)$.
Then we get
$$
\sum_{({n}_{1},\dotsc,{n}_{r})\in \cF_\Omega^r\cap \Delta_{\mc{Q}}(3\beta_j,\beta_{j+1})}\textup{Cum}\big(\varphi_{\Omega,{n}_1}\circ a(n_1),\dotsc,\varphi_{\Omega,{n}_r}\circ a(n_r)\big)
= o(1)
$$
as $T\to\infty$. Also by Lemma \ref{lem:cluster}
\begin{align*}
\sum_{({n}_{1},\dotsc,{n}_{r})\in \cF_T^{r}\cap\Delta(\beta_r)}
\textup{Cum}_{r}\big(\varphi_{\Omega,{n}_1}\circ a( n_1),\dotsc,\varphi_{\Omega,{n}_r}\circ a( n_r)\big)
=o(V_\Omega^{r/2})
\end{align*}
as $T\to\infty$, which completes the proof.

\section{Completion of proof} \label{sec:compofp}

Now we finish the proof of Theorem \ref{th:main2}.
We combine Proposition \ref{lem:convinL1}, Proposition \ref{th:variance}, and Proposition \ref{prop:higherlimit}.
Since
$$
\int_Y \Phi_\Omega\, d\nu=0, \quad
V_\Omega^{-1}\int_{Y}\Phi_\Omega^{2}\, d\nu \to 
\sigma^2>0,\quad 
\textup{Cum}_{r}\left(\Phi_\Omega\right)=o(V_\Omega^{r/2})\quad\hbox{for $r\ge 3$},
$$
it follows that the random variable $V_\Omega^{-1/2}\Phi_\Omega$ converges as $T\to\infty$ in distribution to the Normal Law
with variance $\sigma^2$. Furthermore, since
$$
\big\|\Psi_{\Omega}-\Phi_\Omega\big\|_{{L}^1(\nu)}=o\left(V_\Omega^{1/2}\right),
$$
the random variable $V_\Omega^{-1/2}\Psi_\Omega$ has the same limiting distribution.
In view of \eqref{eq:connn}, this implies the theorem.

It remains to verify that the assumptions of these propositions can be simultaneously satisfied, namely, the conditions \eqref{eq:cond_approx},
\eqref{eq:var_cond}, \eqref{eq:var_cond333}, \eqref{eq:high_cond}.
We take $L_T= V_{\Omega}^{1/2}$ and $\eps_T = (\ln T)^{-K_0}$ with $K_0>0$.
Then we are required to have 
\begin{align*}
(\ln T)^{-(K_0-K_1)}\le a \leq b (\ln \ln T)^{-4-\varepsilon},\;\;
b^{-1/3}(\ln T)^{\theta}\le V_\Omega^{1/2},\;\; b\ge (\ln T)^{-2\min(1,K_1-1)+\theta}
\end{align*}
for some $0<K_1<K_0$ and $\theta,\varepsilon>0$.
We recall (cf. \eqref{eq;voll}) that $V_\Omega\sim (\ln T)^2b$ as $T\to\infty$, so that
these conditions can be arranged when
$$
(\ln T)^{-K}\le a \leq b (\ln \ln T)^{-4-\varepsilon}\qand
b\ge (\ln T)^{-6/5+\theta}
$$
for some $K,\varepsilon,\theta>0$. This gives the main theorem under the additional assumption that 
$$
a\ge (\ln T)^{-K}\quad\hbox{for some $K>0$.}
$$

Finally, we show that  solutions with small products give negligible contributions.
For this purpose, we use the following lemma:

\begin{lem}\label{l:small}
Let 
$$
\Theta_T:=
\left\{(x_1,x_2,y)\in\mb{R}^{3}:
\; |x_1x_2|\, y\le(\ln T)^{-K},\;
  |x_1|,\,|x_2|\leq c,\;
  1\le y< T
\right\}.
$$
Then when $K>2$, for a.e. $\ul x\in [0,1)^2$,
$$
\left|\bigcup_{T> 1} \Theta_T\cap \Lambda_{\ul x}\right|<\infty.
$$
\end{lem}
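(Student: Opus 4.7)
The plan is to apply the Borel--Cantelli lemma on $[0,1)^2$, after reinterpreting the union over $T$ as a single condition on lattice triples.

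First I would unwind the union. A lattice point $(qx_1+p_1,\,qx_2+p_2,\,q)\in\Lambda_{\ul x}$ lies in $\Theta_T$ for some $T>1$ precisely when $q\ge 1$, $|qx_i+p_i|\le c$ for $i=1,2$, and $|qx_1+p_1|\,|qx_2+p_2|\,q\le(\ln T)^{-K}$ for some $T>\max(q,1)$. For $q=1$ the last condition is vacuous since $(\ln T)^{-K}\to\infty$ as $T\downarrow 1$, and only $O_c(1)$ pairs $(p_1,p_2)$ satisfy the box constraint, so $q=1$ contributes finitely many points. For $q\ge 2$, since $(\ln T)^{-K}$ is decreasing in $T$, the weakest form of the inequality is obtained by letting $T\downarrow q$, giving the equivalent condition
\[
|qx_1+p_1|\,|qx_2+p_2|\,q\,(\ln q)^K<1.
\]

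Next, let $E_q\subset[0,1)^2$ be the set of $\ul x$ admitting some $(p_1,p_2)\in\bZ^2$ satisfying the displayed inequality together with the box constraint. To estimate $\Vol(E_q)$ I would use the fact that $\phi_q\colon[0,1)^2\to[0,1)^2$, $\ul x\mapsto(qx_1\bmod 1,\,qx_2\bmod 1)$, preserves Lebesgue measure. The set $E_q$ is contained in $\phi_q^{-1}(A_q)$, where
\[
A_q:=\Big\{\ul y\in[0,1)^2:\;\min(y_1,1-y_1)\cdot\min(y_2,1-y_2)<\tfrac{1}{q(\ln q)^K}\Big\},
\]
so $\Vol(E_q)\le\Vol(A_q)$. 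A direct planar integration (decomposing according to the four quadrants determined by the positions of $y_i$ relative to $1/2$, and using $\int_0^{1/2}\min(\beta/s,\,1/2)\,ds\ll\beta\ln(1/\beta)$ with $\beta=1/(q(\ln q)^K)$) yields $\Vol(A_q)\ll(\ln q)/(q(\ln q)^K)=1/(q(\ln q)^{K-1})$.

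Finally, $\sum_{q\ge 2}\Vol(E_q)\ll\sum_{q\ge 2}1/(q(\ln q)^{K-1})$, which converges exactly when $K>2$. The Borel--Cantelli lemma then yields, for a.e.\ $\ul x\in[0,1)^2$, only finitely many $q$'s for which the displayed inequality admits integer solutions $(p_1,p_2)$; for each such $q$ the box constraint restricts $(p_1,p_2)$ to $O_c(1)$ choices, so $\bigcup_{T>1}\Theta_T\cap\Lambda_{\ul x}$ is finite. The only mildly delicate point is the bookkeeping in the first step---verifying that $\sup\{(\ln T)^{-K}:\,T>q\}=(\ln q)^{-K}$ is not attained, so the resulting inequality is strict---but the volume estimate is otherwise a routine hyperbolic-region computation, and the threshold $K>2$ matches exactly the summability of $1/(q(\ln q)^{K-1})$.
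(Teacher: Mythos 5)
Your proof is correct and takes essentially the same route as the paper's: both reduce the union over $T>1$ to a condition depending only on $q$, bound the measure of the exceptional set for each $q$ by $\ll_c 1/(q(\ln q)^{K-1})$ (you via the measure-preserving torus map, the paper via the hyperbolic-region estimate $\Vol(\Upsilon(\varepsilon))\ll_c\varepsilon\ln(1/\varepsilon)$), and conclude with Borel--Cantelli, with $K>2$ giving summability. Your explicit treatment of $q=1$ and the strict-versus-weak inequality at $T\downarrow q$ are slightly more careful bookkeeping, but the substance is identical.
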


\begin{proof}
The set in question consists of vectors $(qx_1-p_1,qx_2-p_2,q)$ with $(p_1,p_2,q)\in\bZ^3$ such that 
$$
|qx_1-p_1|\cdot |qx_2-p_2|\cdot q\le (\ln T)^{-K},\;\;\;
  |qx_1-p_1|, |qx_2-p_2|\leq c,\;\;\;
  1\le q< T
$$
for some $T>1$. If this set is infinite, it follows the inequalities
$$
|qx_1-p_1|\cdot |qx_2-p_2|\le(\ln q)^{-K}q^{-1},\;\;\;
  |qx_1-p_1|, |qx_2-p_2|\leq c
$$
have integral solutions for infinitely many $q\in\bN$.
In other words,
$$
\left|\bigcup_{T\ge 1} \Theta_T\cap \Lambda_{\ul x}\right|=\infty
\quad\Longrightarrow \quad\ul x\in \limsup A_q,
$$
where
$$
A_q:=\left\{\ul x\in [0,1)^2:\, \exists p_1,p_2\in \bZ:  (qx_1-p_1,qx_2-p_2)\in \Upsilon\left((\ln q)^{-K}q^{-1}\right)\right\}
$$
and
$$
\Upsilon(\varepsilon):=\left\{(x_1,x_2) \in\mb{R}^{2}:
\; |x_1x_2|\le\varepsilon,\;
  |x_1|,|x_2|\leq c
\right\}.
$$
One checks that 
$$
\Vol(\Upsilon(\varepsilon))\ll_c \varepsilon\ln (1/\varepsilon).
$$
for sufficiently small $\varepsilon$, and consequently
$$
\Vol(A_q)\ll_c (\ln q)^{1-K}q^{-1}.
$$
Hence, the claim follows from the Borel--Cantelli lemma.
\end{proof}

Now we are ready to complete the proof of the main theorem.
Let us write 
$$
\Omega=\Omega'\sqcup \Omega'',
$$
where the set
$\Omega'$ consists of the points of $\Omega$ satisfying 
additionally $|x_1x_2|\, y>(\ln T)^{-K}$ with fixed sufficiently large $K$,
and $\Omega''$ is its complement. Then
$$
|\Omega\cap \Lambda_{\ul x}|-\Vol(\Omega)=
\Big(|\Omega'\cap \Lambda_{\ul x}|-\Vol(\Omega')\Big)
+
\Big(|\Omega''\cap \Lambda_{\ul x}|-\Vol(\Omega'')\Big).
$$
We have already established that 
$\Vol(\Omega')^{-1/2}\big(|\Omega'\cap \Lambda_{\ul x}|-\Vol(\Omega')\big)$, $\ul x\in [0,1)^2$, 
converges in distribution to the Normal Law,
because $\Omega'$ can be viewed as the set \eqref{eq:omega0} with
the lower bound $a':=\max(a,(\ln T)^{-K})$.
Further, we note that
$$
\Vol(\Omega'')\le \int_1^T \Vol\left(\Upsilon\big(y^{-1}(\ln T)^{-K}\big)\right)\,dy\ll_K (\ln T)^{1-K}\ln\ln T=o\big(\Vol(\Omega)^{1/2}\big)
$$
as $T\to \infty$. Therefore, 
$\Vol(\Omega)^{-1/2}\big(|\Omega'\cap \Lambda_{\ul x}|-\Vol(\Omega')\big)$
also converges in distribution to the same limit. 
Further, it follows from Lemma \ref{l:small} that
$\Vol(\Omega)^{-1/2}\big(|\Omega''\cap \Lambda_{\ul x}|-\Vol(\Omega'')\big) \to 0$ for a.e. $\ul x\in [0,1)^2$,
and consequently this quantity also converges to zero in measure.
Therefore, $\Vol(\Omega)^{-1/2}\big(|\Omega\cap \Lambda_{\ul x}|-\Vol(\Omega)\big)$
converges to the same Normal Law.


\end{document}